\newcommand{\cw}{{\curlywedge}}
\newcommand{\K}{{\mathbb K}}
\newcommand{\g}{\frak g}
\newcommand{\vv}{\frak v}
\newcommand{\h}{\frak h}
\newcommand{\ff}{\frak f}
\newcommand{\m}{\frak m}
\newcommand{\n}{\frak n}
\newcommand{\T}{\frak t}
\newcommand{\kk}{\frak k}
\newcommand{\f}{\frac}
\newcommand{\lo}{\longrightarrow}
\renewcommand{\r}{\frak r}
\renewcommand{\Im}{{\rm Im}}
\newtheorem{theorem}{Theorem}[section]
\newtheorem{corollary}[theorem]{Corollary}
\newtheorem{lemma}[theorem]{Lemma}
\newtheorem{proposition}[theorem]{Proposition}
\theoremstyle{definition}
\newcommand{\ds}{\displaystyle}
\newtheorem{definition}[theorem]{Definition}
\newtheorem{example}[theorem]{Example}
\title{\bf A non-abelian exterior product of Hom-Leibniz algebras}
\author{ Behrouz Edalatzadeh$^{a}$ , Seyedeh Narges Hosseini$^{b,}$\footnote{
Corresponding author.}
, Ali Reza Salemkar$^{b}$\\
{\small $^a$ Department of Mathematics, Faculty of Sciences, Razi
University, Kermanshah, Iran}\\
{\small $^b$ Department of Mathematics, Faculty of Mathematical
Sciences, Shahid Beheshti University, Tehran, Iran}\\
{\small edalatzadeh@gmail.com~,~narges.hosseini90@gmail.com~,~
salemkar@sbu.ac.ir}}
\date{ }
\begin{document}
\maketitle
\begin{abstract}
In this paper, we introduce a non-abelian exterior product of
Hom-Leibniz algebras and investigate its relative to the Hopf's
formula. We also construct an eight-term exact sequence in the
homology of Hom-Leibniz algebras. Finally, we relate the notion of
capability of a Hom-Leibniz algebra to its exterior product.
\\[.2cm]
{\it Keywords:} Hom-Leibniz algebra, non-abelian exterior product,
Hom-Leibniz homology, capable Hom-Leibniz,
Schur multiplier.\\
{\it Mathematics Subject Classification 2020}: 17A30, 17B61,
18G90.
\end{abstract}

\section{Introduction}
Leibniz algebras have been initially introduced by Bloh \cite{B}
as a non skew-symmetric analogue of Lie algebras and further
explored by Loday \cite{L} for constructing a new (co)homology
theory for Lie algebras, the so called Leibniz (co)homology (see
also \cite{LP}). In \cite{G}, Gnedbaye introduced the notion of
the non-abelian tensor product of Leibniz algebras and gave some
of its fundamental properties. Using this notion, Donadze et al.
\cite{DMK} defined the non-abelian exterior product of these
algebras and investigated its relations to the low dimensional
Leibniz homology. In particular, they obtained the Hopf formulas
for the (higher) homology of Leibniz algebras and, using the
theory of simplicial Leibniz algebras, described the second
homology of Leibniz algebras as central subalgebras of their
exterior products. Moreover, they constructed an eight term exact
sequence in Leibniz homology. One of the important tools in
studying the classification of Leibniz algebras is the use of
capability property, which is closely related to the second
homology of Leibniz algebras. In \cite{HES}, the authors dealt
with some of the applications of exterior product to the notion of
capability of Leibniz algebras. There is a series of papers (for
instance, see \cite{CK,EH,EP,EV,HES1,KKL}) emphasizing the
relevance of the tensor and exterior products to the development
and exposition of the basic theories of the capability and the
homology of Leibniz algebras.\vspace{.2cm}

Hom-Lie algebras have been introduced by Hartwig, Larson and
Silvestrov \cite{HLS} to study the deformations of the Witt and
Virasoro algebras. This notion mainly arises from the
investigation of quantum deformations and discretisation of vector
fields via twisted derivations (\cite{LS}); for more information
on Hom-Lie algebras and further references concerning the subject
we refer the reader to \cite{CG,CIP,CKR,JL,S,SC,SX}. After
introducing Hom-Lie algebras, many authors extended this idea to
several other algebraic structures. In particular, Makhlouf and
Silvestrov \cite{MS,MS1} introduced the notion of Hom-Leibniz
algebras, which is a natural generalization of Hom-Lie and Leibniz
algebras. A Hom-Leibniz algebra is a vector space, say $\g$,
endowed with a bilinear bracket satisfying a Leibniz identity
twisted by a linear self-map $\alpha_\g$ of $\g$. If the bracket
is skew-symmetric or $\alpha_\g$ is the identity map, then we
recover the definitions of Hom-Lie algebra and Leibniz algebra,
respectively. Hence, one is motivated to investigate some
appropriate results of the theories of Hom-Lie and Leibniz
algebras in the general setting of the category of Hom-Leibniz
algebras. Accordingly, Cheng and Su \cite{CS} developed some
structure theory, such as (co)homology groups and universal
central extensions, for Hom-Leibniz algebras (see also
\cite{CIR}). Casas and Rego \cite{CR} introduced the notions of
Hom-action and semi-direct product of  Hom-Leibniz algebras and
established the equivalence between split extensions and the
semi-direct product extensions of these algebras. Recently, Cases,
Khmaladze and Rego \cite{CKR1} introduced the notion of the
non-abelian tensor product of Hom-Leibniz algebras and described
universal central extensions of  these algebras via Hom-Leibniz
tensor product. In this paper, we introduce the notion of the
non-abelian exterior product of Hom-Leibniz algebras. We obtain
some structure properties of the exterior products of Hom-Leibniz
algebras, and use them to give several homology results,
generalizing known ones for Leibniz algebras. In particular, the
Hopf's formula for Hom-Leibniz algebras is proved. We also
introduce the notion of capability of Hom-Leibniz algebras and
investigate the relationship between the exterior product and the
capability.\vspace{.15cm}

This paper is organized as follows: In Section 2, we review some
basic definitions on Hom-Leibniz algebras, with a special mention
of the Hom-Leibniz action and the Leibniz homology, and give some
necessary results for the development of the paper. In Section 3,
we define and gather general results on the exterior product of
Hom-Leibniz algebras. In Section 4, several key results from the
homology theory of Leibniz algebras are extended to Hom-Leibniz
algebras. In particular, we describe the second homologies of
Hom-Leibniz algebras as central subalgebras of their exterior
products, and obtain an exact sequence of eight terms associated
with an extension of Hom-Leibniz algebras in the homology. In the
last section, we show that to investigate the capability of
non-perfect Hom-Lie algebras, we only need to study those
Hom-Leibniz algebras whose companion endomorphisms are surjective.
We also establish the capability property for Hom-Leibniz algebras
which are perfect or abelian. Furthermore, under some conditions,
we show that capability is inherited
to both the two summands.\vspace{.2cm}\\
{\large{\bf Notations}}. Throughout this paper, all vector spaces
and algebras are considered over some fixed field $\K$ and linear
maps are $\K$-linear maps.  We write $\otimes$ and $\wedge$ for
the usual tensor and exterior products of vector spaces over $\K$,
respectively. For any vector space (respectively, Hom-Leibniz
algebra) $\g$, a subspace (respectively, an ideal) $\g'$ and
$x\in\g$, we write $\bar x$ to denote the coset $x+\g'$. If
${\frak a}$ and ${\frak b}$ are subspaces of a vector space $\vv$
for which $\vv={\frak a}+{\frak b}$ and ${\frak a}\cap{\frak
b}=0$, we will write $\vv={\frak a}\dot{+}{\frak b}$.
\section{Preliminaries on Hom-Leibniz algebras}
This section is devoted to recall some essential definitions and
auxiliary results, which will be needed.
\subsection{Basic definitions}
A {\it Hom-Leibniz algebra} $(\g,\alpha_\g)$ is a non-associative
algebra $\g$ together with a linear map
$\alpha_\g:\g\longrightarrow \g$ satisfying the condition

~\hspace{3cm}$[\alpha_\g(x),[y,z]]=[[x,y],\alpha_\g(z)]-[[x,z],\alpha_\g(y)]$,\hspace{2cm}
(Hom-Leibniz~identity)\\
for all $x,y,z\in\g$, where $[~,~]$ denotes the product in $\g$.
In the whole paper we only deal with (the so-called {\it
multiplicative}) Hom-Leibniz algebras $(\g,\alpha_\g)$ such that
$\alpha_\g$ preserves the product, that is,
$\alpha_\g([x,y])=[\alpha_\g(x),\alpha_\g(y)]$ for all $x,y\in\g$.
The Hom-Leibniz algebra $(\g,\alpha_\g)$ is said to be {\it
regular} if $\alpha_\g$ is bijective. Taking $\alpha_\g=id_\g$, we
recover exactly Leibniz algebras. A {\it Hom-vector space} is a
pair $(\vv,\alpha_\vv)$, where $\vv$ is a vector space and
$\alpha_\vv$ is a linear self-map of $\vv$. If we put $[x,y]=0$
for all $x,y\in\vv$, then $(\vv,\alpha_\vv)$ is a Hom-Leibniz
algebra, which is called an {\it abelian Hom-Leibniz algebra}.

A {\it homomorphism} of Hom-Leibniz algebras
$\delta:(\g_1,\alpha_{\g_1})\longrightarrow(\g_2,\alpha_{\g_2})$
is an algebra homomorphism from $\g_1$ to $\g_2$ such that $\delta
\circ\alpha_{\g_1}=\alpha_{\g_2}\circ\delta$. The corresponding
category of Hom-Leibniz algebras is denoted by $\bf{HomLb}$. As
this category is a Jonsson-Tarski algebraic variety with a unique
constant, it is a semi-abelian category, and then the well-known
Snake Lemma is valid for Hom-Leibniz algebras. We refer the reader
to \cite{CKR1} for obtaining more information on this category.

Let $(\g,\alpha_\g)$ be an arbitrary Hom-Leibniz algebra. Then\\
$\bullet$\hspace{.3cm} A (Hom-Leibniz) {\it subalgebra}
$(\h,\alpha_\h)$ of $( \g,\alpha_\g)$ consists of a vector
subspace $\h$ of $\g$, which is closed under the product and
invariant under the map $\alpha_\g$, together with the linear
self-map $\alpha_\h$ being the restriction of $\alpha_\g$ on $\h$.
In such a case we may write $\alpha_\g|$ for $\alpha_\h$. A
subalgebra $(\h,\alpha_\h)$
is an {\it ideal} if $[x,y],[y,x]\in\h$ for all $x\in\h$, $y\in\g$.\\
$\bullet$\hspace{.3cm} The {\it center} of $(\g,\alpha_\g)$ is the
vector space $Z(\g)=\{x\in\g~|~[x,y]=0$, for all $y\in\g\}$.\\
Put $Z_{\alpha}(\g)=\{x\in\g~|~[\alpha_\g^n(x),y]=0,$ for all
$y\in\g, n\geq0\}$, where $\alpha_\g^0=id_\g$ and $\alpha_\g^n$,
$n\geq1$, denotes the composition of $\alpha_\g$ with itself $n$
times. It is straightforward to see that
$(Z_{\alpha}(\g),\alpha_{Z_{\alpha}(\g)})$ is the largest central
ideal of $(\g,\alpha_\g)$. We call $Z_{\alpha}(\g)$ the
$\alpha$-{\it center} of $(\g,\alpha_\g)$. When $\alpha_L$ is a
surjective homomorphism or $(\g,\alpha_\g)$ is abelian, then
$Z_{\alpha}(\g)=Z(\g)$.\\
$\bullet$\hspace{.3cm} If $(\h,\alpha_\h)$ and $(\kk,\alpha_\kk)$
are two ideals of $(\g,\alpha_\g)$, then the {\it$($Higgins$)$
commutator} of $(\h,\alpha_\h)$ and $(\kk,\alpha_\kk)$, denoted by
$([\h,\kk],\alpha_{[\h,\kk]})$, is a Hom-Leibniz subalgebra
generated by the elements $[h,k]$, $h\in\h$, $k\in\kk$. Note that
$([\h,\kk],\alpha_{[\h,\kk]})$ is an ideal of both
$(\h,\alpha_\h)$ and $(\kk,\alpha_\kk)$. Especially,
$([\g,\g],\alpha_{[\g,\g]})$ is an ideal of $(\g,\alpha_\g)$. The
quotient $(\g/[\g,\g],\bar\alpha_\g)$ is called the {\it
abelianisation} of $(\g,\alpha_\g)$, and denoted by
$(\g^{ab},\alpha_{\g^{ab}})$. The Hom-Leibniz algebra
$(\g,\alpha_\g)$
is called {\it perfect} if $\g=[\g,\g]$.\\
$\bullet$\hspace{.3cm} An exact sequence
$(\m,\alpha_\m)\rightarrowtail(\kk,\alpha_\kk)\twoheadrightarrow(\g,\alpha_\g)$
of Hom-Leibniz algebras is a {\it central extension} of
$(\g,\alpha_\g)$ if $\m\subseteq Z(\kk)$, or equivalently,
$[\m,\kk]=0$.\vspace{.2cm}

Let $(\m,\alpha_\m)$ and $(\n,\alpha_\n)$ be two Hom-Leibniz
algebras. By a {\it Hom-Leibniz action} of $(\m,\alpha_\m)$ on
$(\n,\alpha_\n)$, we mean a couple of $\K$-bilinear maps $\m\times
\n\to\n, (m,n)\mapsto\hspace{-.17cm}~^mn$ and $\n\times\m\to\n$,
$(n,m)\mapsto n^m$, satisfying the axioms:
\begin{alignat*}{2}
    &(A1)~^{[m,m']}\alpha_\n(n)=(~^mn)^{\alpha_\m(m')}+~^{\alpha_\m(m)}(n^{m'}), ~~~~~~~~
    &&\hspace{-.5cm}(A2)~~^{\alpha_\m(m)}[n,n']=[~^mn,\alpha_\n(n')]-[~^mn',\alpha_\n(n)],\\
    &(A3)~\alpha_\n(n)^{[m,m']}=(n^m)^{\alpha_\m(m')}-(n^{m'})^{\alpha_\m(m)},
    &&\hspace{-.5cm}(A4)~[n,n']^{\alpha_\m(m)}=[n^m,\alpha_\n(n')]+[\alpha_\n(n),{n'}^m],\\
    &(A5)~~^{\alpha_\m(m)}(~^{m'}n)=-^{\alpha_\m(m)}(n^{m'}),
    &&\hspace{-.5cm}(A6)~[\alpha_\n(n),~^mn']=-[\alpha_\n(n),{n'}^m],\\
    &(A7)~\alpha_\n(~^mn)=~^{\alpha_\m(m)}\alpha_\n(n),
    &&\hspace{-.5cm}(A8)~\alpha_\n(n^m)=\alpha_\n(n)^{\alpha_\m(m)},
\end{alignat*}
for all $m,m'\in\m, n,n'\in\n$. The action is called {\it trivial}
if $~^mn=n^m=0$, for all $m\in\m$, $n\in\n$. Also, if
$(\n,\alpha_\n)$ is an abelian Hom-Leibniz algebra enriched with a
Hom-Leibniz action of $(\m,\alpha_\m)$, then $(\n,\alpha_\n)$ is
said to be a {\it Hom-co-representation} of  $(\m,\alpha_\m)$
(\cite{CKR1}). It is obvious that if $(\m,\alpha_\m)$ is a
subalgebra of some Hom-Leibniz algebra $(\g,\alpha_\g)$ and
$(\n,\alpha_\n)$ is an ideal of $(\g,\alpha_\g)$, then the product
in $\g$ induces a Hom-Leibniz action of $(\m,\alpha_\m)$ on
$(\n,\alpha_\n)$ given by$~^mn=[m,n]=m^n$. In particular, there is
a Hom-Leibniz action of $(\g,\alpha_\g)$ on itself given by the
product in $\g$.

Let $(\m,\alpha_\m)$ and $(\g,\alpha_\g)$ be Hom-Leibniz algebras
together with a Hom-Leibniz action of $(\g,\alpha_\g)$ on
$(\m,\alpha_\m)$. Then the {\it semi-direct product} $(\m\rtimes
\g,\alpha_{\rtimes})$ is defined to be the Hom-Leibniz algebra
with the underlying vector space $\m\dot{+}\g$, the endomorphism
$\alpha_{\rtimes}:\m\rtimes\g\lo\m\rtimes\g$ given by
$\alpha_{\rtimes}(m,x)=(\alpha_\m(m),\alpha_\g(x))$, and the
product\vspace{-.15cm}
\[[(m_1,x_1),(m_2,x_2)]=([m_1,m_2]+\vspace{-.1cm}~^{\alpha_\g(x_1)}m_2
+m_1^{\alpha_\g(x_2)}, [x_1,x_2]).\vspace{-.1cm}\] When
$(\g,\alpha_\g)$ acts trivially on $(\m,\alpha_\m)$, we get the
direct sum structure of Hom-Leibniz algebras.

Let $(\m,\alpha_\m)\stackrel{i}\rightarrowtail(\kk,\alpha_\kk)
\stackrel{\zeta}\twoheadrightarrow(\g,\alpha_\g)$ be a split short
exact sequence of Hom-Leibniz algebras, that is, there is a
homomorphism $\eta:(\g,\alpha_\g)\lo(\kk,\alpha_\kk)$ such that
$\zeta\circ\eta=id_\g$. Then there is a Hom-Leibniz action of
$(\g,\alpha_\g)$ on $(\m,\alpha_\m)$ defined by
$~^xm=i^{-1}[\eta(x),i(m)]$ and $m^x=i^{-1}[i(m),\eta(x)]$ for all
$m\in\m$, $x\in\g$. Moreover, we have the following commutative
diagram with exact rows.\vspace{.2cm}

\tikzset{node distance=3cm, auto}$~~~~$
\begin{tikzpicture}[%
back line/.style={densely dotted}, cross
line/.style={preaction={draw=white, -,line width=6pt}}]
\hspace{1.2cm}
    \node(A1){\fontsize{9.5}{5}\selectfont$~$};
    \node[right of=A1](B1){\fontsize{9.5}{5}\selectfont$(\m,\alpha_\m)$};
    \node[right of=B1](C1){\fontsize{9.5}{5}\selectfont$(\m\rtimes\g,\alpha_{\rtimes})$};
    \node[right of=C1](D1){\fontsize{9.5}{5}\selectfont$(\g,\alpha_\g)$};
    \node(B2)[below of=B1, node distance=1.7cm]{\fontsize{9.5}{5}\selectfont$(\m,\alpha_{\m})$};
    \node(C2)[below of=C1, node distance=1.7cm]{\fontsize{9.5}{5}\selectfont$(\kk,\alpha_\kk)$};
    \node(D2)[below of=D1, node distance=1.7cm]{\fontsize{9.5}{5}\selectfont$(\g,\alpha_{\g}),$};

    \draw[>->](B1) to node{$i$}(C1);
    \draw[>->>](B1) to node[right]{\fontsize{9.5}{5}\selectfont$id_\m$}(B2);
    \draw[>->](B2) to node{\fontsize{9.5}{5}\selectfont$\subseteq$}(C2);
    \draw[>->>](C1) to node[right]{\fontsize{9.5}{5}\selectfont$\xi$}(C2);
    \draw[->>](C1) to node{\fontsize{9.5}{5}\selectfont$\rho$}(D1);
    \draw[->>](C2) to node{\fontsize{9.5}{5}\selectfont$\xi$}(D2);
    \draw[>->>](D1) to node[right]{\fontsize{9.5}{5}\selectfont$id_\g$}(D2);
\end{tikzpicture}\\
where $i$ and $\rho$ are the canonical and projective Hom-maps,
respectively, and $\xi$ is an isomorphism defined by
$\xi(m,x)=m+\eta(x)$. In particular, if we put $\T=\eta(\g)$, then
$(\T,\alpha_\T)$ is a subalgebra of $(\kk,\alpha_\kk)$ such that
$\kk=\m\dot{+}\T$.

A {\it crossed module} of Hom-Leibniz algebras is a homomorphism
$\mu:(\m,\alpha_\m)\to(\g,\alpha_\g)$ together with a Hom-Leibniz
action of $(\g,\alpha_\g)$ on $(\m,\alpha_\m)$ such
that\vspace{-.15cm}
\[\mu(~^xm)=[x,\mu(m)]~~,~~\mu(m^x)=[\mu(m),x]~~,
~~^{\mu(m)}{m'}=[m,m']= m^{\mu(m')}\vspace{-.15cm}\] for all
$m,m'\in\m, x\in\g$. It is obvious that if $(\m,\alpha_\m)$ is an
ideal of $(\g,\alpha_\g)$, then the inclusion Hom-map
$(\m,\alpha_\m)\hookrightarrow(\g,\alpha_\g)$ is a crossed module.
\subsection{Free Hom-Leibniz algebras}
Let $(\vv,\alpha_\vv)$ be a Hom-vector space,
$T(\vv)=\bigoplus_{n\geq1}\vv^{\otimes n}$ the tensor space over
$\vv$ and $\bar\alpha_\vv=\bigoplus_{n\geq1}\alpha_\vv^{\otimes
n}$. Then it is easy to see that $(T(\vv),\bar\alpha_\vv)$ is a
Hom-vector space. Also, $(T(\vv),\bar\alpha_\vv)$ equipped with
the product defined by $x\cdot y=x\otimes y$ is a multiplicative
Hom-algebra. In fact, $(T(\vv),\bar\alpha_\vv)$ is the free
multiplicative algebra over $(\vv,\alpha_\vv)$. Let ${\frak
i}_\vv$ be an ideal of $(T(\vv),\bar\alpha_\vv)$ generated by the
elements\vspace{-.1cm}
$$\bar\alpha_\vv(x)\cdot(y\cdot z)-(x\cdot y)\cdot
\bar\alpha_\vv(z)+(x\cdot
z)\cdot\bar\alpha_\vv(y),\vspace{-.1cm}$$ for all $x,y,z\in
T(\vv)$. It is clear that the quotient algebra $(T(\vv)/{\frak
i}_\vv,\tilde\alpha_\vv)$ is a Hom-Leibniz algebra, where
$\tilde\alpha_\vv$ is induced by $\bar\alpha_\vv$. We denote the
factor algebra $(T(\vv)/{\frak i}_\vv,\tilde\alpha_\vv)$ by
$(\ff_\vv,\alpha_\ff)$. The proof of the following universal
property is straightforward.
\begin{proposition}
Let $(\g,\alpha_\g)$ be a Hom-Leibniz algebra, $(\vv,\alpha_\vv)$
be a Hom-vector space, and $f\colon(\vv,\alpha_\vv) \to
(\g,\alpha_\g)$ a homomorphism of Hom-vector spaces. Then there
exits a unique homomorphism of Hom-Leibniz algebras $\bar
f:(\ff_\vv,\alpha_\ff)\to(\g,\alpha_\g)$, that is,
$(\ff_\vv,\alpha_\ff)$ is the free Leibniz algebra over
$(\vv,\alpha_\vv)$.
\end{proposition}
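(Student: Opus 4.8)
The plan is to obtain $\bar f$ in two stages: first extend $f$ to the tensor Hom-algebra $(T(\vv),\bar\alpha_\vv)$, and then pass to the quotient $(\ff_\vv,\alpha_\ff)=(T(\vv)/{\frak i}_\vv,\tilde\alpha_\vv)$. Viewing the target $(\g,\alpha_\g)$ merely as a multiplicative Hom-algebra (forgetting the Hom-Leibniz identity) and recalling that $(T(\vv),\bar\alpha_\vv)$ is the free multiplicative algebra over $(\vv,\alpha_\vv)$, the Hom-vector space homomorphism $f$ extends to a unique homomorphism of multiplicative Hom-algebras $\hat f\colon(T(\vv),\bar\alpha_\vv)\lo(\g,\alpha_\g)$ with $\hat f(v)=f(v)$ for all $v\in\vv$. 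In particular $\hat f(x\cdot y)=[\hat f(x),\hat f(y)]$ and $\hat f\circ\bar\alpha_\vv=\alpha_\g\circ\hat f$ for all $x,y\in T(\vv)$.

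The crucial step is to verify that $\hat f$ annihilates the defining ideal ${\frak i}_\vv$. Since ${\frak i}_\vv$ is generated by the elements $\bar\alpha_\vv(x)\cdot(y\cdot z)-(x\cdot y)\cdot\bar\alpha_\vv(z)+(x\cdot z)\cdot\bar\alpha_\vv(y)$, it suffices to evaluate $\hat f$ on a single generator. Writing $a=\hat f(x)$, $b=\hat f(y)$, $c=\hat f(z)$ and using that $\hat f$ preserves products and commutes with the twisting maps, one obtains
\begin{align*}
&\hat f\bigl(\bar\alpha_\vv(x)\cdot(y\cdot z)-(x\cdot y)\cdot\bar\alpha_\vv(z)+(x\cdot z)\cdot\bar\alpha_\vv(y)\bigr)\\
&\qquad=[\alpha_\g(a),[b,c]]-[[a,b],\alpha_\g(c)]+[[a,c],\alpha_\g(b)],
\end{align*}
which is exactly the difference of the two sides of the Hom-Leibniz identity in $(\g,\alpha_\g)$, hence zero. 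As $\hat f$ is an algebra homomorphism and $[~,~]$ is bilinear, the vanishing on generators propagates to every element of the two-sided ideal they generate (if $\hat f(w)=0$, then $\hat f(u\cdot w)=[\hat f(u),0]=0$ and likewise $\hat f(w\cdot u)=0$); therefore $\hat f({\frak i}_\vv)=0$.

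Consequently $\hat f$ factors through the canonical projection $\pi\colon(T(\vv),\bar\alpha_\vv)\lo(\ff_\vv,\alpha_\ff)$, yielding a unique linear map $\bar f\colon\ff_\vv\lo\g$ with $\bar f\circ\pi=\hat f$. Because $\pi$ is a surjective homomorphism and $\hat f$ preserves products and intertwines $\bar\alpha_\vv$ with $\alpha_\g$, the induced map satisfies $\bar f([p,q])=[\bar f(p),\bar f(q)]$ for all $p,q\in\ff_\vv$ and $\bar f\circ\alpha_\ff=\alpha_\g\circ\bar f$; that is, $\bar f$ is a morphism in ${\bf HomLb}$, and it extends $f$ since $\hat f|_\vv=f$. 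For uniqueness, observe that $\ff_\vv$ is generated as a Hom-Leibniz algebra by the image of $\vv$, so any morphism agreeing with $f$ there must coincide with $\bar f$. The only genuinely non-formal point is the second paragraph, where the Hom-Leibniz identity of the target is precisely what forces $\hat f$ to kill ${\frak i}_\vv$; once that is established, everything else follows formally from the universal properties of the free multiplicative algebra and of the quotient.
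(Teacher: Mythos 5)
Your argument is correct and is exactly the ``straightforward'' proof the paper has in mind (the paper omits it entirely): extend $f$ to the free multiplicative Hom-algebra $(T(\vv),\bar\alpha_\vv)$, observe that the Hom-Leibniz identity in $(\g,\alpha_\g)$ forces the extension to vanish on the generators of ${\frak i}_\vv$ and hence on the whole ideal, and factor through the quotient, with uniqueness coming from $\vv$ generating $\ff_\vv$. No gaps worth flagging; the only cosmetic omission is that the ideal's closure under $\bar\alpha_\vv$ is also respected, which follows at once from $\hat f\circ\bar\alpha_\vv=\alpha_\g\circ\hat f$.
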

There is an adjoint pair of functors\vspace{-.2cm}
$$
\begin{tikzcd}
\bold{HomVect}\arrow[r, shift left=.75ex, "G"{name=G}] &
\bold{HomLb}\arrow[l, shift left=.75ex, "U"{name=F}]
\arrow[phantom, from=F, to=G, "\dashv" rotate=-90].
\end{tikzcd}\vspace{-.2cm}
$$
where $G$ sends a Hom-vector space $(\vv,\alpha_\vv)$ to the free
Hom-Leibniz algebra $(\ff_\vv,\alpha_\ff)$ and $U$ sends a
Hom-algebra $({\frak a},\alpha_{\frak a})$ to the Hom-vector space
obtained by forgetting the operations. As an immediate
consequence, we obtain that every Hom-Leibniz algebra admits at
least one free presentation.

Let $(\g,\alpha_\g)$ be a Hom-Leibniz algebra with a free
presentation
$(\r,\alpha_\r)\rightarrowtail(\ff,\alpha_\ff)\twoheadrightarrow(\g,\alpha_\g)$.
Then the {\it Schur multiplier} of $(\g,\alpha_\g)$ is defined to
be the abelian Hom-Leibniz algebra\vspace{-.05cm}
\[{\cal{M}}(\g,\alpha_\g)=\frac{(\r,\alpha_\r)\cap
([\ff,\ff],\alpha_{[\ff,\ff]})}{([\ff,\r],\alpha_{[\ff,\r]})}.\vspace{-.05cm}\]
As the category of Hom-Lie algebras is semi-abelian, Theorem 6.9
of \cite{EV1} indicates that the Schur multiplier of
$(\g,\alpha_\g)$ is independent of the choice of the free
presentation of $(\g,\alpha_\g)$.\vspace{.2cm}

The homology of Hom-Leibniz algebras, which is a generalization of
the Chevalley-Eilenberg homology of Leibniz algebras, is
constructed as follows: Let $(\g,\alpha_\g)$ be a Hom-Leibniz
algebra and $(\m,\alpha_\m)$ be a Hom-co-representation of
$(\g,\alpha_\g)$. The homology of $(\g,\alpha_\g)$ with
coefficient in $(\m,\alpha_\m)$, denoted by $HL_*^\alpha(\g,\m)$,
is the homology of the Hom-chain complex
$(CL_*^\alpha(\g,\m),d_*)$, where $CL_n^\alpha(\g,\m):=(\m\otimes
\g^{\otimes n},\alpha_\m\otimes \alpha_\g^{\otimes n})$, $n\geq0$
($\g^{\otimes n}$ denotes the $n$-th tensor power of $\g$ with
$\g^{\otimes0}=\K$) and the boundary map
$d_n:CL_n^\alpha(\g,\m)\to CL_{n-1}^\alpha(\g,\m)$, is a
homomorphism of Hom-vector spaces given by
\begin{align*}
\hspace{1cm}d_n(&m\otimes x_1\otimes\dots\otimes x_n)=m^{x_1}
\otimes\alpha_\g(x_2)\otimes\dots\otimes\alpha_\g(x_n)\\
&+\sum_{i=2}^n (-1)^i
~^{x_i}m\otimes\alpha_\g(x_1)\otimes\dots\otimes
\widehat{\alpha_\g(x_i)}\otimes\dots\otimes\alpha_\g(x_n)\\
&+\sum_{1\leq i<j\leq n} (-1)^{j+1}\alpha_\m(m)\otimes
\alpha_\g(x_1)\otimes\dots
\otimes\alpha_\g(x_{i-1})\otimes[x_i,x_j]\otimes\alpha_\g(x_{i+1})\\
&\hspace{.75cm}\otimes\dots\otimes\widehat{\alpha_\g(x_j)}\otimes\dots\otimes\alpha_\g(x_n),
\end{align*}
where the notation $\widehat{\alpha_\g(x_i)}$ means that the
variable $\alpha_\g(x_i)$ is omitted. It is easy to see that
$HL_n^\alpha(\g,\m)=(\ker(d_n)/\Im(d_{n-1}),\overline{\alpha_\m\otimes
\alpha_\g^{\otimes n}})$ is a Hom-vector space for all $n\geq1$
(note that $HL_{n}^{\alpha}(\g,\m)$ is defined in \cite{Y} to be
the vector space $\ker(d_n)/\Im(d_{n-1})$, while we here take it
as a Hom-vector space). In a special case, if
$(\m,\alpha_\m)=(\K,id_\K)$ is a trivial Hom-co-representation of
$(\g,\alpha_\g)$, then $HL_n^\alpha(\g,\K)$ is called the {\it
n-th homology} of $(\g,\alpha_\g)$ and denoted by
$HL_n^\alpha(\g)$. It is easily verified that there is an
isomorphism of Hom-vector spaces $HL_1^\alpha(\g)\cong
(\g^{ab},\bar\alpha_{\g^{ab}})$. In Section 4, we give a relation
between the second homology of $(\g,\alpha_\g)$ and ${\cal
M}(\g,\alpha_\g)$.
\section{The exterior product of Hom-Leibniz algebras}
In this section, we introduce the notion of the (non-abelian)
exterior product of Hom-Leibniz algebras, which generalizes the
exterior product of Leibniz algebras \cite{DMK}, and study some of
its basic properties. Let us first recall from \cite{CKR1} the
definition of the (non-abelian) tensor product of Hom-Leibniz
algebras.

Let $\partial_1:(\m,\alpha_\m)\to(\g,\alpha_\g)$ and
$\partial_2:(\n,\alpha_\n)\to(\g,\alpha_\g)$ be two crossed
modules of Hom-Leibniz algebras. Then $(\m,\alpha_\m)$ and
$(\n,\alpha_\n)$ act compatibility on each other via the
Hom-Leibniz action of $(\g,\alpha_\g)$.  The {\it
$($Hom-Leibniz$)$ tensor product} $(\m\ast\n,\alpha_{\m\ast\n})$
is defined as the Hom-Leibniz algebra generated by the symbols
$m\ast n$ and $n\ast m$ (for $m\in\m$, $n\in\n$) subject to the
following defining relations\vspace{-.2cm}
\begin{alignat*}{2}
    &(1a)~k(m\ast n)=k m\ast n=m\ast kn,~~~~~~
    &&(4a)~\alpha_\m(m)\ast[n,n']=m^n\ast\alpha_\n(n')-m^{n'}\ast\alpha_\n(n),\\
    &(1b)~k(n\ast m)=kn\ast m=n\ast km,~~~~~~
    &&(4b)~\alpha_\n(n)\ast[m,m']=n^m\ast\alpha_\m(m')-n^{m'}\ast\alpha_\m(m),\\
    &(2a)~(m+m')\ast n=m\ast n+m'\ast n,~~~~~~
    &&(4c)~[m,m']\ast\alpha_\n(n)=\hspace{-.1cm}~^mn\ast\alpha_\m(m')-\alpha_\m(m)\ast
    n^{m'},\\
    &(2b)~(n+n')\ast m=n\ast m+n'\ast m,~~~~~~
    &&(4d)~[n,n']\ast\alpha_\m(m)=\hspace{-.1cm}~^nm\ast\alpha_\n(n')-\alpha_\n(n)\ast
    m^{n'},\\
    &(2c)~m\ast(n +n')=m\ast n+m\ast n',~~~~~~
    &&(5a)~m^n\ast\hspace{-.1cm}~^{m'}n'=[m\ast n,m'\ast n']=\hspace{-.1cm}~^mn\ast
    m'^{n'},\\
    &(2d)~n\ast(m+m')=n\ast m+n\ast m',~~~~~~
    &&(5b)~~^nm\ast n'^{m'}=[n\ast m,n'\ast m']=n^m\ast\hspace{-.1cm}~^{n'}m',\\
    &(3a)~\alpha_\m(m)\ast\hspace{-.1cm}~^{m'}n=-\alpha_\m(m)\ast
    n^{m'},~~~~~~
    &&(5c)~m^n\ast n'^{m'}=[m\ast n,n'\ast
    m']=\hspace{-.1cm}~^mn\ast\hspace{-.1cm}~^{n'}m',\\
    &(3b)~\alpha_\n(n)\ast\hspace{-.1cm}~^{n'}m=-\alpha_\n(n)\ast
    m^{n'},~~~~~~
    &&(5d)~~^nm\ast\hspace{-.1cm}~^{m'}n'=[n\ast m,m'\ast n']=n^m\ast
    m'^{n'},
\end{alignat*}
for all $k\in\K$, $m,m'\in\m$, $n,n'\in\n$, and the endomorphism
$\alpha_{\m\ast\n}$ is given on generators by\vspace{-.2cm}
\[\alpha_{\m\ast\n}(m\ast
n)=\alpha_\m(m)\ast\alpha_\n(n)~~~~{\rm
and}~~~~\alpha_{\m\ast\n}(n\ast
m)=\alpha_\n(n)\ast\alpha_\m(m).\vspace{-.2cm}\] Note that the
identity map $id_\g:(\g,\alpha_\g)\lo (\g,\alpha_\g)$ is a crossed
module with $(\g,\alpha_\g)$ acting on itself by the product, so
we can form the tensor products $(\g\ast \m,\alpha_{\g\ast\m})$,
$(\g\ast\n,\alpha_{\g\ast\n})$ and $(\g\ast\g,\alpha_{\g\ast\g})$.
Also, if $\alpha_\m=id_\m$, $\alpha_\n=id_\n$ and
$\alpha_\g=id_\g$, then $\m\ast\n$ coincides with the tensor
product of Leibniz algebras given in \cite{G}.

The following proposition summarizes the rather elementary
properties of Hom-Leibniz tensor products, the proof of which are
left to the reader (see also \cite{CKR1}).
\begin{proposition}\label{1}
With the above assumptions and notations, we have $:$

$(i)$ The maps\vspace{-.4cm}
\begin{alignat*}{1}
\lambda&:(\m\ast\n,\alpha_{\m\ast\n})\lo(\g,\alpha_\g),~~~~~~~\lambda(m\ast
n)=[\partial_1(m),\partial_2(n)],~~~~\lambda(n\ast
m)=[\partial_2(n),\partial_1(m)],\\[-.17cm]
\lambda_\m&:(\m\ast\n,\alpha_{\m\ast\n})\lo(\m,\alpha_\m),~~~~\lambda_\m(m\ast
n)=m^n,~~~~\lambda_\m(n\ast
m)=\hspace{-.1cm}~^nm,\\[-.17cm]
\lambda_\n&:(\m\ast\n,\alpha_{\m\ast\n})\lo(\n,\alpha_\n),\hspace{.1cm}~~~~~\lambda_\n(m\ast
n)=\hspace{-.1cm}~^mn,~~~~\lambda_\n(n\ast m)=n^m,
\end{alignat*}
are homomorphisms of Hom-Leibniz algebras with the kernels
contained in the center of $(\m\ast\n,\alpha_{\m\ast\n})$.

$(ii)$ There is a Hom-Leibniz action of $(\g,\alpha_\g)$ on
$(\m\ast\n,\alpha_{\m\ast\n})$ given by\vspace{-.1cm}
\begin{alignat*}{2}
    &~^{x}(m\ast
    n)=\hspace{-.1cm}~^{x}m\ast\alpha_\n(n)-\hspace{-.1cm}~^{x}n\ast\alpha_\m(m),~~~~~~
    &&~(m\ast n)^{x}=m^x\ast\alpha_\n(n)+\alpha_\m(m)\ast
    n^{x},\\
    &~^{x}(n\ast m)=\hspace{-.1cm}~^{x}n\ast\alpha_\m(m)-\hspace{-.1cm}~^{x}m\ast\alpha_\n(n),~~~~~~
    &&~(n\ast m)^{x}=n^{x}\ast\alpha_\m(m)+\alpha_\n(n)\ast m^x,
\end{alignat*}
and then $(\m,\alpha_\m)$ and $(\n,\alpha_\n)$ act on
$(\m\ast\n,\alpha_{\m\ast\n})$ via $\partial_1$ and $\partial_2$.
Moreover,\vspace{-.2cm}
\begin{equation}
\lambda(\hspace{-.1cm}~^xy)=[\alpha_\g(x),\lambda(y)]~~,~~
\lambda(y^x)=[\lambda(y),\alpha_\g(x)]~~,~~^{\lambda(y')}y=[\alpha_{\m\ast\n}(y'),y]~~,~~
y^{\lambda(y')}=[y,\alpha_{\m\ast\n}(y')]~~~~~\vspace{-.2cm}
\end{equation}
for all $y,y'\in\m\ast\n$, $x\in\g$, and the relations similar to
$(1)$ are valid for $\lambda_\m$ and $\lambda_\n$.

$(iii)$ If $(\m,\alpha_\n)$ and $(\n,\alpha_\n)$ act trivially on
each other, and both endomorphisms $\alpha_\m$ and $\alpha_\n$ are
surjective, then there is an isomorphism of abelian Leibniz
algebras\vspace{-.1cm}
\[(\m\ast\n,\alpha_{\m\ast\n})\cong((\m^{ab}\otimes\n^{ab})\oplus
(\n^{ab}\otimes\m^{ab}),\alpha_{\oplus}),\vspace{-.1cm}\] where
$\alpha_{\oplus}$ denotes the linear self-map of
$(\m^{ab}\otimes\n^{ab})\oplus(\n^{ab}\otimes\m^{ab})$ induced by
$\alpha_\m$ and $\alpha_\n$.

$(iv)$ Let
$(\m,\alpha_\m)\rightarrowtail(\kk,\alpha_\kk)\twoheadrightarrow(\g,\alpha_\g)$
and
$(\m',\alpha_{\m'})\rightarrowtail(\kk',\alpha_{\kk'})\twoheadrightarrow(\g',\alpha_{\g'})$
are short exact sequences of Hom-Leibniz algebras, where
$(\m',\alpha_{\m'})$  and $(\kk',\alpha_{\kk'})$ are ideals of
$(\kk,\alpha_\kk)$, and $(\g',\alpha_{\g'})$ is an ideal of
$(\g,\alpha_\g)$. Then there exists an exact sequence of
Hom-Leibniz algebras\vspace{-.2cm}
\[((\m\ast\kk')\rtimes(\kk\ast\m'),\alpha_{\rtimes})\lo(\kk\ast\kk',\alpha_{\kk\ast\kk'})
\twoheadrightarrow(\g\ast\g',\alpha_{\g\ast\g'}),\vspace{-.2cm}\]
in which the Hom-Leibniz action of
$(\kk\ast\m',\alpha_{\kk\ast\m'})$ on
$(\m\ast\kk',\alpha_{\m\ast\kk'})$ is induced by the homomorphism
$\lambda_{\m'}:(\kk\ast\m',\alpha_{\kk\ast\m'})\lo(\m',\alpha_{\m'})$.
\end{proposition}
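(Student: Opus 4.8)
The plan is to treat parts (i)--(iii) as systematic checks against the defining relations $(1a)$--$(5d)$ and the action axioms $(A1)$--$(A8)$, and to concentrate the real effort on the exact sequence in (iv).

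For (i) I would first verify that $\lambda,\lambda_\m,\lambda_\n$ are well defined, i.e.\ that each defining relation of $\m\ast\n$ is sent to a valid identity. For $\lambda$ this is exactly where the crossed-module axioms and the Hom-Leibniz identity cooperate: on relation $(4a)$, say, one rewrites $\partial_1(m^n)=[\partial_1(m),\partial_2(n)]$ and then applies the Hom-Leibniz identity with $x=\partial_1(m)$, $y=\partial_2(n)$, $z=\partial_2(n')$ to obtain the required equality, the remaining relations being analogous; compatibility with the endomorphisms comes from $\alpha_\g$-multiplicativity. The homomorphism property $\lambda([y,z])=[\lambda(y),\lambda(z)]$ is read off $(5a)$--$(5d)$ after using $\partial_1(m^n)=[\partial_1(m),\partial_2(n)]$ and $\partial_2({}^{m'}n')=[\partial_1(m'),\partial_2(n')]$, and the same scheme handles $\lambda_\m,\lambda_\n$. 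For the centrality of the kernels I would extract from $(5a)$--$(5d)$ the uniform bracket formula
\[ [y,z]=\lambda_\m(y)\ast\lambda_\n(z)=\lambda_\n(y)\ast\lambda_\m(z)\qquad(y,z\in\m\ast\n), \]
valid on generators and extended bilinearly by $(2a)$--$(2d)$. Since $0\ast b=b\ast 0=0$, the vanishing of $\lambda_\m(y)$ (resp.\ $\lambda_\n(y)$) kills both $[y,z]$ and $[z,y]$, giving $\ker\lambda_\m,\ker\lambda_\n\subseteq Z(\m\ast\n)$; the inclusion $\ker\lambda\subseteq Z(\m\ast\n)$ I would then deduce from the relations ${}^{\lambda(y')}y=[\alpha_{\m\ast\n}(y'),y]$ and $y^{\lambda(y')}=[y,\alpha_{\m\ast\n}(y')]$ of part (ii), which express that the self-action factors through $\lambda$. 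The presence of $\alpha_{\m\ast\n}$ in these two identities is the one delicate point here, and I would reconcile it using multiplicativity of the endomorphisms.

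For (ii) I would check that the displayed formulas define a Hom-Leibniz action by verifying $(A1)$--$(A8)$ on generators -- each reduces, after expansion, to one of $(3a)$--$(5d)$ -- and then obtain the equivariance identities $(1)$ by the same reduction to generators. Part (iii) is a collapse argument: with trivial mutual action, $(4a)$--$(4d)$ give $\alpha_\m(m)\ast[n,n']=0$ and $[m,m']\ast\alpha_\n(n)=0$ together with their symmetric analogues, and surjectivity of $\alpha_\m,\alpha_\n$ upgrades these to $\m\ast[\n,\n]=[\m,\m]\ast\n=0$ (and symmetrically), so the two families of generators factor through $\m^{ab}\otimes\n^{ab}$ and $\n^{ab}\otimes\m^{ab}$; moreover $(5a)$--$(5d)$ force every bracket to vanish, so $\m\ast\n$ is abelian. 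The obvious map from $(\m^{ab}\otimes\n^{ab})\oplus(\n^{ab}\otimes\m^{ab})$ and its inverse (well defined because the target is abelian, so all relations hold automatically) are mutually inverse and intertwine $\alpha_{\oplus}$ with $\alpha_{\m\ast\n}$, yielding the isomorphism.

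The substance is (iv). The functorial surjections $\kk\twoheadrightarrow\g$ and $\kk'\twoheadrightarrow\g'$ induce a surjection $\psi:(\kk\ast\kk',\alpha_{\kk\ast\kk'})\twoheadrightarrow(\g\ast\g',\alpha_{\g\ast\g'})$, $\psi(k\ast k')=\bar k\ast\bar{k'}$, while the inclusions $\m\hookrightarrow\kk$ and $\m'\hookrightarrow\kk'$ induce $f_1:\m\ast\kk'\to\kk\ast\kk'$ and $f_2:\kk\ast\m'\to\kk\ast\kk'$, from which I assemble $\phi(u,v)=f_1(u)+f_2(v)$ on the semidirect product $(\m\ast\kk')\rtimes(\kk\ast\m')$. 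Before exactness, two points make $\phi$ a homomorphism: that $\Im f_1$ is an ideal of $\kk\ast\kk'$, and that the bracket of $\Im f_2$ on $\Im f_1$ coincides with the action prescribed through $\lambda_{\m'}$ -- both follow from the uniform bracket formula of part (i) together with the equivariance $(1)$. The inclusion $\Im\phi\subseteq\ker\psi$ is immediate, since $\bar m=0$ in $\g$ and $\bar{m'}=0$ in $\g'$. The main obstacle is the reverse inclusion $\ker\psi\subseteq\Im\phi$, which I would establish by showing that $(\kk\ast\kk')/\Im\phi$ realises the presentation of $\g\ast\g'$: the assignment $\bar k\ast\bar{k'}\mapsto (k\ast k')+\Im\phi$ is well defined because changing the lift of $\bar k$ (resp.\ $\bar{k'}$) alters $k\ast k'$ by an element of $\Im f_1$ (resp.\ $\Im f_2$), and it respects $(1a)$--$(5d)$ because these already hold in $\kk\ast\kk'$; this produces a two-sided inverse to the map $(\kk\ast\kk')/\Im\phi\to\g\ast\g'$ induced by $\psi$, forcing $\ker\psi=\Im\phi$ and hence exactness at $\kk\ast\kk'$. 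The delicate bookkeeping -- confirming that $\Im\phi$ is genuinely an ideal (so that the quotient is a Hom-Leibniz algebra) and that every defining relation descends to it -- is where I expect the argument to be most technical.
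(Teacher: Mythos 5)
First, a point of reference: the paper itself offers no proof of this proposition --- it is introduced as summarizing ``rather elementary properties \dots the proof of which are left to the reader (see also \cite{CKR1})'' --- so there is no argument of the authors' to compare yours against, and your proposal has to stand on its own. Judged that way, its architecture is the standard one and most of it is sound: the well-definedness checks for $\lambda,\lambda_\m,\lambda_\n$; the uniform bracket formula $[y,z]=\lambda_\m(y)\ast\lambda_\n(z)=\lambda_\n(y)\ast\lambda_\m(z)$, which is indeed what $(5a)$--$(5d)$ say on generators and extends bilinearly because brackets of generators are again generators (so $\m\ast\n$ is spanned by them); the collapse argument in $(iii)$; and the presentation/universal-property argument for exactness at $\kk\ast\kk'$ in $(iv)$, where your remark that ideality of $\Im f_1$ follows from the bracket formula is correct, since $[k\ast k'',m\ast k''']={}^{k}k''\ast m^{k'''}$ lands back among the generators of $\m\ast\kk'$.

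The genuine gap is the centrality of $\ker\lambda$ in part $(i)$. Your uniform bracket formula disposes of $\ker\lambda_\m$ and $\ker\lambda_\n$, but for $\ker\lambda$ you appeal to the identity ${}^{\lambda(y')}y=[\alpha_{\m\ast\n}(y'),y]$. For $y'\in\ker\lambda$ this yields only $[\alpha_{\m\ast\n}(y'),y]=0$, i.e.\ $\alpha_{\m\ast\n}(\ker\lambda)\subseteq Z(\m\ast\n)$, which is strictly weaker than what is claimed: $\alpha_{\m\ast\n}$ is not assumed surjective, and multiplicativity of the endomorphisms runs the wrong way ($\lambda(y')=0$ gives $\lambda(\alpha_{\m\ast\n}(y'))=0$, not conversely), so ``reconciling via multiplicativity'' cannot close this. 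What is actually needed is the implication: if $a=\lambda_\m(y')$ satisfies $\partial_1(a)=0$, then $a\ast\lambda_\n(y)=0$ for every $y$ (equivalently $b\ast\lambda_\m(y)=0$ for $b=\lambda_\n(y')\in\ker\partial_2$). None of the relations $(3a)$--$(4d)$ applies directly here, because each of them carries an $\alpha$ on the factor you would need to manipulate; this is precisely where the Hom-twist breaks the classical Ellis/Gnedbaye argument, and it is the one claim you must either derive honestly from the defining relations or import explicitly from \cite{CKR1} rather than wave at.
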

According to part $(i)$ of the above proposition, for any
Hom-Leibniz algebra $(\g,\alpha_\g)$, the Hom-map
$\lambda_\g:(\g\ast\g,\alpha_{\g\ast\g})\lo(\g,\alpha_\g)$,
$x_1\ast x_2\longmapsto[x_1,x_2]$, is a homomorphism (which is
called the {\it commutator Hom-map}). We set
$JL_2^{\alpha}(\g)=(\ker(\lambda_\g),\alpha_{\ker(\lambda_\g)})$.\vspace{.2cm}

Let $\m\square\n$ be the vector subspace of $\m\ast\n$ spanned by
the elements of the form $m\ast n'-n\ast m'$ with
$\partial_1(m)=\partial_2(n)$ and $\partial_1(m')=\partial_2(n')$.
Note that $\m\square\n$ lies in the $\alpha$-center of
$(\m\ast\n,\alpha_{\m\ast\n})$, because for any $m\ast n'-n\ast
m'\in\m\square\n$, $m_1\in\m$, $n_1\in\n$, we have\vspace{-.2cm}
\begin{align*}
\hspace{-.5cm}[\alpha^k_{\m\ast\n}(m\ast n'-n\ast m'),m_1\ast
n_1]=&\alpha^k_\m(m)^{\alpha^k_\n(n')}\ast~^{m_1}n_1-
~^{\alpha^k_\n(n)}\alpha^k_\m(m')\ast~^{m_1}n_1\\
=&\alpha^k_\m(m)^{\partial_2(\alpha^k_\n(n'))}\ast~^{m_1}n_1-
~^{\partial_2(\alpha^k_\n(n))}\alpha^k_\m(m')\ast~^{m_1}n_1\\
=&\alpha^k_\m(m)^{\alpha^k_\g(\partial_2(n'))}\ast~^{m_1}n_1-
~^{\alpha^k_\g(\partial_2(n))}\alpha^k_\m(m')\ast~^{m_1}n_1\\
=&\alpha^k_\m(m)^{\alpha^k_\g(\partial_1(m'))}\ast~^{m_1}n_1-
~^{\alpha^k_\g(\partial_1(m))}\alpha^k_\m(m')\ast~^{m_1}n_1\\
=&\alpha^k_\m(m)^{\partial_1(\alpha^k_\m(m'))}\ast~^{m_1}n_1-
~^{\partial_1(\alpha^k_\m(m))}\alpha^k_\m(m')\ast~^{m_1}n_1\\
=&[\alpha^k_\m(m),\alpha^k_\m(m')]\ast~^{m_1}n_1-
[\alpha^k_\m(m),\alpha^k_\m(m')]\ast~^{m_1}n_1=0.
\end{align*}
The other cases are proved in the same way. Also, It is easily
seen that $\alpha_{\m\ast\n}(\m\square\n)\subseteq\m\square\n$.
Hence, if $\alpha_{\m\square\n}$ is the restriction of
$\alpha_{\m\ast\n}$ to $\m\square\n$, then
$(\m\square\n,\alpha_{\m\square\n})$ is a central ideal of
$(\m\ast\n,\alpha_{\m\ast\n})$.
\begin{definition}
The {\it $($Hom-Leibniz$)$ exterior product}
$(\m\cw\n,\alpha_{\m\cw\n})$ of Hom-Leibniz algebras
$(\m,\alpha_\m)$ and $(\n,\alpha_\n)$ is defined to be the
quotient\vspace{-.15cm}
\[(\m\cw\n,\alpha_{\m\cw\n})=(\frac{\m\ast\n}{\m\square\n},\bar\alpha_{\m\ast
\n}).\vspace{-.15cm}\]
\end{definition}
We write $m\cw n$ and $n\cw m$ to denote the images in $\m\cw\n$
of the generators $m\ast n$ and $n\ast m$, respectively. It is
straightforward to check that the parts $(i)$ and $(ii)$ of
Proposition 3.1 hold with $\ast$ replaced by $\cw$. Moreover, as a
special case of the exterior analogue of Proposition 3.1$(iv)$ we
have that a short exact sequence $e:
(\m,\alpha_\m)\rightarrowtail(\kk,\alpha_\kk)\twoheadrightarrow(\g,\alpha_\g)$
of Hom-Leibniz algebras induces an exact sequence\vspace{-.2cm}
\begin{equation}
(\m\cw\kk,\alpha_{\m\cw\kk})\lo(\kk\cw\kk,\alpha_{\kk\cw\kk})
\twoheadrightarrow(\g\cw\g,\alpha_{\g\cw\g}).\vspace{-.2cm}
\end{equation}
This analogue is obtained from the fact that the images of the
homomorphisms
$(\kk\cw\m,\alpha_{\kk\cw\m})\lo(\kk\cw\kk,\alpha_{\kk\cw\kk})$
and
$(\m\cw\kk,\alpha_{\m\cw\kk})\lo(\kk\cw\kk,\alpha_{\kk\cw\kk})$
are the same. We can say more if the extension $e$ is split.
\begin{proposition}
A split extension
$(\m,\alpha_\m)\rightarrowtail(\kk,\alpha_\kk)\twoheadrightarrow(\g,\alpha_\g)$
of Hom-Leibniz algebras induces an extension of Hom-Leibniz
algebras\vspace{-.2cm}
\[(\m\cw\kk,\alpha_{\m\cw\kk})\stackrel{i}\rightarrowtail(\kk\cw\kk,\alpha_{\kk\cw\kk})
\twoheadrightarrow(\g\cw\g,\alpha_{\g\cw\g}).\vspace{-.2cm}\]
\end{proposition}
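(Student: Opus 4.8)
The plan is to upgrade the exact sequence $(2)$ to a short exact sequence by showing that, for a split extension, the first map $i\colon(\m\cw\kk,\alpha_{\m\cw\kk})\to(\kk\cw\kk,\alpha_{\kk\cw\kk})$ is injective. Indeed, exactness of $(2)$ already supplies $\Im(i)=\ker\!\big(\kk\cw\kk\to\g\cw\g\big)$ together with the surjectivity of the second map, so injectivity of $i$ is the only missing ingredient.

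First I would record that the section splits everything on the outside. Writing $\zeta\colon(\kk,\alpha_\kk)\twoheadrightarrow(\g,\alpha_\g)$ for the surjection and $\eta\colon(\g,\alpha_\g)\to(\kk,\alpha_\kk)$ for the splitting homomorphism with $\zeta\circ\eta=id_\g$, functoriality of $\cw$ yields homomorphisms $\zeta\cw\zeta\colon\kk\cw\kk\to\g\cw\g$ and $\eta\cw\eta\colon\g\cw\g\to\kk\cw\kk$ with $(\zeta\cw\zeta)\circ(\eta\cw\eta)=(\zeta\eta)\cw(\zeta\eta)=id_{\g\cw\g}$. Thus the surjection in $(2)$ is a split epimorphism, and since $\mathbf{HomLb}$ is semi-abelian, $(\kk\cw\kk,\alpha_{\kk\cw\kk})$ decomposes as a semidirect product of $\ker(\zeta\cw\zeta)$ and $(\g\cw\g,\alpha_{\g\cw\g})$, with $\ker(\zeta\cw\zeta)=\Im(i)$ by the exactness of $(2)$. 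Everything therefore reduces to proving that $i$ restricts to an isomorphism $\m\cw\kk\to\Im(i)$, that is, that $i$ is a monomorphism.

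To prove injectivity I would construct a retraction $r\colon(\kk\cw\kk,\alpha_{\kk\cw\kk})\to(\m\cw\kk,\alpha_{\m\cw\kk})$ with $r\circ i=id_{\m\cw\kk}$. Put $\rho=id_\kk-\eta\zeta\colon\kk\to\kk$; since $\eta\zeta$ is an idempotent endomorphism with image $\T=\eta(\g)$ and $\ker\zeta=\m$, the map $\rho$ is a linear projection onto the ideal $\m$ that restricts to the identity on $\m$ and commutes with $\alpha_\kk$ (because $\eta$ and $\zeta$ are Hom-maps). Using the decomposition $\kk=\m\dot{+}\T$, I define $r$ on generators by
\[r(k\cw k')=\rho(k)\cw k'+\eta\zeta(k)\cw\rho(k'),\]
which lands in $\m\cw\kk$ because $\rho(k)\in\m$ in the first summand and $\rho(k')\in\m$ in the second. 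Conceptually $r$ is the projection of $\kk\cw\kk$ that annihilates the $\T\cw\T$ component of $k\cw k'=\rho(k)\cw\rho(k')+\rho(k)\cw\eta\zeta(k')+\eta\zeta(k)\cw\rho(k')+\eta\zeta(k)\cw\eta\zeta(k')$ while keeping the other three. A direct check on generators then gives $r(i(m\cw k))=\rho(m)\cw k=m\cw k$ (as $\eta\zeta(m)=0$) and $r(i(k\cw m))=\rho(k)\cw m+\eta\zeta(k)\cw m=(\rho(k)+\eta\zeta(k))\cw m=k\cw m$, so that $r\circ i=id_{\m\cw\kk}$ once $r$ is known to be well defined.

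The main obstacle is precisely the well-definedness of $r$: I must verify that the assignment above respects every defining relation of the exterior product $\kk\cw\kk$ --- the bi-additivity and $\K$-linearity relations, the action relations $(3a)$--$(4d)$, the bracket relations $(5a)$--$(5d)$, and the quotient by $\kk\square\kk$ --- and that it is $\alpha$-equivariant and multiplicative, hence a homomorphism of Hom-Leibniz algebras. The point making this work is that $\eta\zeta$ is an algebra endomorphism while $\rho$ projects onto the ideal $\m$, so that the error terms produced by $\rho$ on a bracket, namely $\rho([k,k'])-[\rho(k),\rho(k')]$, are expressible through the Hom-Leibniz action of $\T\cong\g$ on $\m$ and cancel against the second summand of $r$ by the action axioms $(A1)$--$(A8)$ and the crossed-module identities; the $\alpha$-equivariance is immediate from $\rho\,\alpha_\kk=\alpha_\kk\,\rho$ and $\eta\zeta\,\alpha_\kk=\alpha_\kk\,\eta\zeta$. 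Granting this (routine but lengthy) verification, $i$ is a split monomorphism, and combined with the exactness and surjectivity already available from $(2)$ this exhibits the asserted short exact sequence $(\m\cw\kk,\alpha_{\m\cw\kk})\stackrel{i}{\rightarrowtail}(\kk\cw\kk,\alpha_{\kk\cw\kk})\twoheadrightarrow(\g\cw\g,\alpha_{\g\cw\g})$.
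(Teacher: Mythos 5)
Your reduction to the injectivity of $i$ is correct and your general strategy (a left inverse defined on generators) is essentially the paper's; but the target of your retraction is wrong, and this is a genuine gap rather than a deferrable computation. The map $r(k\cw k')=\rho(k)\cw k'+\eta\zeta(k)\cw\rho(k')$ annihilates every generator $t_1\cw t_2$ with $t_1,t_2\in\T=\eta(\g)$, yet by relation $(5a)$ such a generator brackets nontrivially into the image of $i$: for $t_1,t_2\in\T$, $m\in\m$, $k\in\kk$ one has $[t_1\cw t_2,\,m\cw k]=[t_1,t_2]\cw[m,k]$ with $[t_1,t_2]\in\T$ and $[m,k]\in\m$, and $r$ sends the right-hand side to $[t_1,t_2]\cw[m,k]$, which is nonzero in general, while $[r(t_1\cw t_2),r(m\cw k)]=[0,r(m\cw k)]=0$. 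So $r$ cannot be a homomorphism of Hom-Leibniz algebras, contrary to what your ``routine but lengthy verification'' would have to establish. Nor can you retreat to treating $r$ as a merely linear retraction: $\kk\cw\kk$ is the quotient of the free Hom-Leibniz algebra on the symbols by the \emph{ideal} generated by the listed relations, and checking those relations suffices for well-definedness only when the assignment extends multiplicatively; the linear relations among the generators include all degree-one consequences of that ideal (obtained by bracketing relations with generators and reducing via $(5a)$--$(5d)$), and the computation above shows your formula violates one of them.

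The paper avoids exactly this problem by not discarding the $\T\cw\T$ component but storing it: it constructs a homomorphism $\psi:(\kk\cw\kk,\alpha_{\kk\cw\kk})\lo((\m\cw\kk)\rtimes(\g\cw\g),\alpha_{\rtimes})$ given on generators by $(m_1,x_1)\cw(m_2,x_2)\mapsto(m_1\cw(m_2,x_2)-m_2\cw(0,x_1),\,x_1\cw x_2)$, relative to a suitable Hom-Leibniz action of $(\g\cw\g,\alpha_{\g\cw\g})$ on $(\m\cw\kk,\alpha_{\m\cw\kk})$. Because the second coordinate records $x_1\cw x_2$ instead of killing it, the bracket relations $(5a)$--$(5d)$ can be respected, $\psi$ is an honest algebra homomorphism, and $\psi\circ i$ is the canonical (injective) inclusion of $\m\cw\kk$ into the first factor. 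To repair your argument, replace the target $\m\cw\kk$ of your $r$ by this semidirect product (or an equivalent device that retains the $\g\cw\g$ part); the rest of your outline then goes through.
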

\begin{proof}
We only need to prove the injectivity of the homomorphism $i$. We
do this by constructing a homomorphism of Hom-Leibniz algebras
$\psi:(\kk\cw \kk,\alpha_{\kk\cw
\kk})\lo((\m\cw\kk)\rtimes(\g\cw\g),\alpha_{\rtimes})$ such that
$\psi\circ i$ is the canonical inclusion. Here the Hom-Leibniz
action of $(\g\cw\g,\alpha_{\g\cw\g})$ on
$(\m\cw\kk,\alpha_{\m\cw\kk})$ is defined as follows:
\vspace{-.1cm}
\begin{alignat*}{2}
    &~^{x}(m\cw k)=\hspace{-.1cm}[\lambda_\g(x),m]\cw\alpha_\kk(k)
    -[\lambda_\g(x),k]\cw\alpha_\m(m)=-\hspace{-.1cm}~^x(k\cw m),\\
    &~(m\cw
    k)^{x}=[m,\lambda_\g(x)]\cw\alpha_\kk(k)+\alpha_\m(m)\cw[k,\lambda_\g(x)],\\
    &~(k\cw m)^{x}=[k,\lambda_\g(x)]\cw\alpha_\m(m)+\alpha_\kk(k)\cw[m,\lambda_\g(x)],
\end{alignat*}
for all $x\in\g\cw\g$, $m\in\m$, $k\in\kk$, where
$\lambda_\g:\g\cw\g\lo\g$, $(x_1\cw x_2)\longmapsto[x_1,x_2]$, is
the commutator Hom-map and $(\g,\alpha_\g)$ is considered as a
subalgebra of $(\kk,\alpha_\kk)$. Since
$(\kk,\alpha_\kk)\cong(\m\rtimes\g,\alpha_{\rtimes})$, we can
define\vspace{-.15cm}
\begin{alignat*}{1}
\psi:((\m\rtimes\g)\cw(\m\rtimes\g),\alpha_{\cw})&
\lo((\m\cw(\m\rtimes\g))\rtimes(\g\cw\g),\alpha_{\rtimes}).\\
(m_1,x_1)\cw(m_2,x_2)&\longmapsto(m_1\cw(m_2,x_2)-m_2\cw(0,x_1),x_1\cw
x_2)
\end{alignat*}

\vspace{-.2cm}\hspace{-.57cm}The verification shows that $\psi$
preserves the defining relations of the Hom-Leibniz exterior
product with $\psi\circ\alpha_{\cw}=\alpha_{\rtimes}\circ\psi$,
and is hence the required homomorphism.
\end{proof}
To show how the exterior product is related to universal central
extensions of Hom-Leibniz algebras, we need the following
\begin{lemma}
If $e:
(\m,\alpha_\m)\stackrel{\iota}\rightarrowtail(\kk,\alpha_\kk)
\stackrel{\phi}\twoheadrightarrow(\g,\alpha_\g)$ is a central
extension of Hom-Leibniz algebras, then there exists a
homomorphism of Hom-Leibniz algebras
$\psi:(\g\cw\g,\alpha_{\cw})\lo(\kk,\alpha_\kk)$ such that
$\phi\circ\psi(x\cw x')=[x,x']$ for all $x,x'\in\g$. Moreover, if
$(\g,\alpha_\g)$ is perfect then $\psi$ is unique.
\end{lemma}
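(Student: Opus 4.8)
The plan is to build $\psi$ by lifting the commutator map of $(\g,\alpha_\g)$ through $\phi$, with the centrality of $\m$ being exactly what makes the construction consistent. For each $x\in\g$ I would choose a preimage $\tilde x\in\kk$ with $\phi(\tilde x)=x$ and set $\psi(x\cw x')=[\tilde x,\tilde x']$ on generators. The first point to settle is independence of the lifts: if $\tilde x$ is replaced by $\tilde x+m$ with $m\in\m=\ker\phi$, then $[\tilde x+m,\tilde x']=[\tilde x,\tilde x']$ since $[\m,\kk]=0$, and likewise in the second slot, so $(x,x')\mapsto[\tilde x,\tilde x']$ is a well-defined bilinear map $\g\times\g\to\kk$. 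Because $\phi\circ\alpha_\kk=\alpha_\g\circ\phi$, the element $\alpha_\kk(\tilde x)$ is a lift of $\alpha_\g(x)$; hence $\psi(\alpha_\g(x)\cw\alpha_\g(x'))=[\alpha_\kk(\tilde x),\alpha_\kk(\tilde x')]=\alpha_\kk([\tilde x,\tilde x'])=\alpha_\kk(\psi(x\cw x'))$ by multiplicativity of $\alpha_\kk$, which is the $\alpha$-equivariance $\psi\circ\alpha_{\cw}=\alpha_\kk\circ\psi$.

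Next I would extend the assignment $x\cw x'\mapsto[\tilde x,\tilde x']$ multiplicatively and check that the images of the defining relations $(1a)$--$(5d)$ of the tensor product vanish in $\kk$ (read with $\ast$ replaced by $\cw$ and with the self-action of $\g$ given by its bracket), so that $\psi$ is well defined. The linearity relations $(1a)$--$(2d)$ are immediate from bilinearity of the bracket, while the action and bracket relations $(3a)$--$(5d)$ all reduce, after inserting lifts and using that $[\tilde x,\tilde y]$ lifts $[x,y]$, to instances of the Hom-Leibniz identity in $\kk$ together with multiplicativity of $\alpha_\kk$; for example $(4a)$ becomes $[\alpha_\kk(\tilde x),[\tilde y,\tilde z]]=[[\tilde x,\tilde y],\alpha_\kk(\tilde z)]-[[\tilde x,\tilde z],\alpha_\kk(\tilde y)]$, which is exactly that identity, and $(5a)$ holds because both of its sides map to $[[\tilde x,\tilde y],[\tilde x',\tilde y']]$. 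The resulting homomorphism $\g\ast\g\to\kk$ annihilates $\g\square\g$, since a spanning element $m\ast n'-n\ast m'$ has $m=n$ and $m'=n'$ (because $\partial_1=\partial_2=id_\g$) and therefore maps to $[\tilde m,\tilde n']-[\tilde n,\tilde m']=0$; thus $\psi$ descends to $(\g\cw\g,\alpha_{\cw})$. Together with the $\alpha$-equivariance above and $\phi\circ\psi(x\cw x')=[\phi(\tilde x),\phi(\tilde x')]=[x,x']$, this establishes existence.

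For uniqueness when $(\g,\alpha_\g)$ is perfect, I would first observe that $(\g\cw\g,\alpha_{\cw})$ is then perfect as well: by $(5a)$ one has $[x\cw y,x'\cw y']=[x,y]\cw[x',y']$, and since every element of $\g$ is a sum of brackets, every generator $a\cw b$ is a sum of brackets of generators. Now suppose $\psi,\psi'$ both satisfy the conclusion, so that $\phi\circ\psi=\phi\circ\psi'$ on generators; then $\psi(u)-\psi'(u)\in\ker\phi=\m\subseteq Z(\kk)$ for every $u$. Writing an arbitrary $u=\sum_i[u_i,v_i]$ and expanding $[\psi(u_i),\psi(v_i)]$ with $\psi(u_i)=\psi'(u_i)+z_i$ and $\psi(v_i)=\psi'(v_i)+w_i$, where $z_i,w_i\in\m$ are central, every cross term vanishes, so $\psi(u)=\psi'(u)$ and hence $\psi=\psi'$.

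I expect the main obstacle to be organizational rather than conceptual: the verification of $(3a)$--$(5d)$ is a finite but lengthy case check, and the one recurring subtlety is to use centrality of $\m$ (so that the choice of lifts is irrelevant) in tandem with multiplicativity of $\alpha_\kk$ at every occurrence of $\alpha$ inside the relations. Once the Hom-Leibniz identity is recognized as the exact image under $\psi$ of relations $(4a)$--$(4d)$, no deeper difficulty remains.
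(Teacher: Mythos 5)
Your proposal is correct and follows essentially the same route as the paper: define $\psi$ on generators via lifts, use centrality of $\m$ to see that the choice of lifts is immaterial and that the defining relations (which become instances of the Hom-Leibniz identity in $\kk$) are preserved, and for uniqueness note that $\psi-\psi'$ kills commutators while $(\g\cw\g,\alpha_{\g\cw\g})$ is perfect by relation $(5a)$. You merely spell out the verifications that the paper leaves to the reader.
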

\begin{proof}
Define the Hom-map $\psi$ on generators by $\psi(x_1\cw
x_2)=[k_1,k_2]$, where $k_i$, $i=1,2$, is any element in pre-image
$x_i$ under $\phi$. Due to the centrality of the extension $e$,
$\psi$ preserves the relations of the exterior product with
$\psi\circ\alpha_{\cw}=\alpha_\kk\circ\psi$, and is thus the
required homomorphism. If
$\psi,\psi':(\g\cw\g,\alpha_{\g\cw\g})\lo(\kk,\alpha_\kk)$ are two
homomorphisms with $\phi\circ\psi=\phi\circ\psi'$, then
$\psi-\psi'=\iota\circ\eta$, where
$\eta:(\g\cw\g,\alpha_{\g\cw\g})\lo(\m,\alpha_\m)$ is a
homomorphism such that $[\g\cw\g,\g\cw\g]$ is contained in
$\ker\eta$. By relation (5a), if $(\g,\alpha_\g)$ is perfect, then
so is $(\g\cw\g,\alpha_{\g\cw\g})$, implying the uniqueness of
$\psi$, as desired.
\end{proof}
The above lemma, together with \cite[Theorem 4.2]{CKR1}, leads us
to the following result.
\begin{proposition}\label{perfect1}
For any perfect Hom-Leibniz algebra $(\g,\alpha_\g)$, the
extension\vspace{-.2cm}
\[(\ker(\lambda_\g),\alpha_{\ker(\lambda_\g)})\rightarrowtail(\g\cw\g,\alpha_{\g\cw\g})
\stackrel{\lambda_\g}\twoheadrightarrow(\g,\alpha_\g)\vspace{-.2cm}\]
is the universal central extension of $(\g,\alpha_\g)$ and so,
there is an isomorphism of Hom-Leibniz algebras
$(\g\cw\g,\alpha_{\g\cw\g})\cong(\g\ast\g,\alpha_{\g\ast\g})$. In
particular,
$HL^{\alpha}_2(\g)\cong(\ker(\lambda_\g),\alpha_{\ker(\lambda_\g)})$.
\end{proposition}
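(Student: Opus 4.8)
The plan is to verify directly that the displayed sequence is the universal central extension of $(\g,\alpha_\g)$, and then to transport the tensor-product statement of \cite[Theorem 4.2]{CKR1} across the resulting isomorphism.

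First I would confirm that the sequence is a central extension. The exterior analogue of Proposition~\ref{1}$(i)$ (valid with $\ast$ replaced by $\cw$) shows that $\ker(\lambda_\g)$ is contained in the center of $(\g\cw\g,\alpha_{\g\cw\g})$, so $[\ker(\lambda_\g),\g\cw\g]=0$ and $(\ker(\lambda_\g),\alpha_{\ker(\lambda_\g)})\rightarrowtail(\g\cw\g,\alpha_{\g\cw\g})\stackrel{\lambda_\g}\twoheadrightarrow(\g,\alpha_\g)$ is central.

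Next I would establish the universal property by a lifting argument fed by the preceding lemma. Let $e:(\m,\alpha_\m)\stackrel{\iota}\rightarrowtail(\kk,\alpha_\kk)\stackrel{\phi}\twoheadrightarrow(\g,\alpha_\g)$ be an arbitrary central extension. The preceding lemma produces a homomorphism $\psi:(\g\cw\g,\alpha_{\g\cw\g})\lo(\kk,\alpha_\kk)$ with $\phi\circ\psi(x\cw x')=[x,x']=\lambda_\g(x\cw x')$ for all $x,x'\in\g$; since the elements $x\cw x'$ generate $\g\cw\g$ and both sides are homomorphisms, $\phi\circ\psi=\lambda_\g$, so $\psi$ is a morphism of central extensions lying over $\mathrm{id}_\g$. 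Because $(\g,\alpha_\g)$ is perfect, the uniqueness clause of that lemma makes $\psi$ the unique such morphism, so $(\g\cw\g,\alpha_{\g\cw\g})\stackrel{\lambda_\g}\twoheadrightarrow(\g,\alpha_\g)$ is indeed the universal central extension.

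Finally I would combine this with \cite[Theorem 4.2]{CKR1}, which gives that $\ker(\lambda_\g)\rightarrowtail\g\ast\g\stackrel{\lambda_\g}\twoheadrightarrow\g$ is the universal central extension of the perfect algebra $(\g,\alpha_\g)$ with $\ker(\lambda_\g)\cong HL_2^{\alpha}(\g)$. The canonical quotient $\g\ast\g\twoheadrightarrow\g\cw\g$ commutes with the two maps $\lambda_\g$ and hence covers $\mathrm{id}_\g$; as a morphism between two universal central extensions of the same object in the semi-abelian category $\mathbf{HomLb}$ it must be an isomorphism, giving $(\g\cw\g,\alpha_{\g\cw\g})\cong(\g\ast\g,\alpha_{\g\ast\g})$ and, on kernels, $HL_2^{\alpha}(\g)\cong(\ker(\lambda_\g),\alpha_{\ker(\lambda_\g)})$. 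I expect the only delicate points to be purely formal: checking that the lifted $\psi$ respects the Hom-endomorphisms and genuinely lies over $\mathrm{id}_\g$, and invoking the abstract uniqueness of universal central extensions in the Hom-Leibniz setting — both already guaranteed by the preceding lemma and the semi-abelian formalism, so the proof is an assembly of these inputs rather than a fresh computation.
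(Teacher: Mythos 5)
Your proposal is correct and follows essentially the same route as the paper, which likewise obtains the result by combining the preceding lemma (existence of the lift $\psi$ to any central extension, with uniqueness forced by perfectness of $\g\cw\g$ via relation (5a)) with \cite[Theorem~4.2]{CKR1} and the uniqueness of universal central extensions. The paper leaves this assembly implicit, and your write-up simply makes the same steps explicit.
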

The notion of non-abelian tensor product of Hom-Lie algebras are
introduced and studied in \cite{CKR}. The following fundamental
result presents a relation between the Hom-Lie tensor product and
Hom-Leibniz exterior product of a Hom-Lie algebra.
\begin{theorem}\label{wedten}
Let $(\g,\alpha_\g)$ be a Hom-Lie algebra. Then there is an
isomorphism of Hom-Leibniz algebras
$(\g\cw\g,\alpha_{\g\cw\g})\cong(\g\star\g,\alpha_{\g\star\g})$.
\end{theorem}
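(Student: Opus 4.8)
The plan is to exhibit mutually inverse homomorphisms of Hom-Leibniz algebras between $(\g\cw\g,\alpha_{\g\cw\g})$ and $(\g\star\g,\alpha_{\g\star\g})$, both given on generators by the correspondence $x\cw y\leftrightarrow x\star y$. The guiding principle is that a Hom-Lie bracket is skew-symmetric, so the self-action of $(\g,\alpha_\g)$ induced by its bracket satisfies $^{x}y=[x,y]=-[y,x]=-y^{x}$ for all $x,y\in\g$. Inserting this single identity is exactly what collapses the two families of Hom-Leibniz tensor generators into the one family of Hom-Lie tensor generators, and makes the two presentations match.

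First I would construct $\Psi:(\g\cw\g,\alpha_{\g\cw\g})\lo(\g\star\g,\alpha_{\g\star\g})$. Since $\g\cw\g$ is the quotient of the Hom-Leibniz tensor product $\g\ast\g$ by $\g\square\g$, it suffices to produce a homomorphism $\tilde\Psi:(\g\ast\g,\alpha_{\g\ast\g})\lo(\g\star\g,\alpha_{\g\star\g})$ that vanishes on $\g\square\g$. I set $\tilde\Psi(x\ast y)=x\star y$ on each of the two families of generators of $\g\ast\g$. Because here $\partial_1=\partial_2=id_\g$, the subspace $\g\square\g$ is spanned by the differences $x\ast y-x\ast y$ formed from the two generator types, so $\tilde\Psi$ annihilates $\g\square\g$ at once and descends to $\Psi$. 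The substantive task is to check that $\tilde\Psi$ respects the defining relations $(1a)$--$(5d)$ of $\g\ast\g$: each of them reduces, after substituting $^{x}y=-y^{x}$, to a relation already valid in $\g\star\g$; for instance relation $(3a)$ becomes the tautology $\alpha_\g(x)\star[y,z]=\alpha_\g(x)\star[y,z]$, while the relations $(5a)$--$(5d)$ match the bracket-defining relations of the Hom-Lie tensor product and thus guarantee that $\Psi$ preserves the Hom-Leibniz product, not merely the underlying vector space.

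Conversely I would define $\Phi:(\g\star\g,\alpha_{\g\star\g})\lo(\g\cw\g,\alpha_{\g\cw\g})$ on generators by $\Phi(x\star y)=x\cw y$, and verify that it preserves the defining relations of the Hom-Lie tensor product recalled from \cite{CKR}. These relations follow from the exterior-product relations inherited by $\g\cw\g$ once skew-symmetry of the bracket of $\alpha_\g$ is used to identify the two Leibniz generator types. Both maps are compatible with the structure endomorphisms: $\alpha_{\g\cw\g}$, $\alpha_{\g\star\g}$ and $\alpha_{\g\ast\g}$ all act on generators by applying $\alpha_\g$ in each slot, so $\Phi\circ\alpha_{\g\star\g}=\alpha_{\g\cw\g}\circ\Phi$ (and likewise for $\Psi$) holds on generators and hence everywhere.

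Finally, since $\Psi\circ\Phi$ and $\Phi\circ\Psi$ are the identity on the respective generating sets, they are the identity maps, and $\Phi$ is the desired isomorphism of Hom-Leibniz algebras. I expect the main obstacle to be the bookkeeping in the two well-definedness checks: one must confirm that, under $^{x}y=-y^{x}$ and modulo $\g\square\g$, the full list of Hom-Leibniz tensor relations $(1a)$--$(5d)$ matches the Hom-Lie tensor relations term by term, with particular care for the bracket relations $(5a)$--$(5d)$ so that the resulting bijection is genuinely a Hom-Leibniz homomorphism. The remaining verifications are routine.
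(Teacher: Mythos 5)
Your proposal is correct and follows essentially the same route as the paper: the paper obtains your $\tilde\Psi$ as the natural epimorphism $\zeta:(\g\ast\g,\alpha_{\g\ast\g})\twoheadrightarrow(\g\star\g,\alpha_{\g\star\g})$ (citing a remark of \cite{CKR} for the relation-checking you carry out by hand), observes $\g\square\g\subseteq\ker\zeta$ to descend to $\bar\zeta$ on $\g\cw\g$, and then defines the same generator-wise inverse $\xi(x\star y)=x\cw y$. The only cosmetic difference is that the paper verifies just one composite, $\xi\circ\bar\zeta=id_{\g\cw\g}$, which together with surjectivity of $\bar\zeta$ already suffices.
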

\begin{proof}
First we recall that there is a Hom-Leibniz action of
$(\g,\alpha_\g)$ on itself by setting
$^{x}y=[x,y]=-y^x,\vspace{-.1cm}$ for all $x,y\in\g$. Hence, the
Hom-Lie tensor product $\g\star \g$ and the Hom-Leibniz exterior
product $\g\cw \g$ have both the structure of Hom-Leibniz algebra.
By virtue of \cite[Reamak 3]{CKR}, there is a natural epimorphism
of Hom-Leibniz algebras
$\zeta:(\g\ast\g,\alpha_{\g\ast\g})\twoheadrightarrow
(\g\star\g,\alpha_{\g\star\g})$. As
$\g\square\g\subseteq\ker\zeta$, the Hom-map $\zeta$ induces a
surjective homomorphism
$\bar\zeta:(\g\cw\g,\alpha_{\g\cw\g})\to(\g\star\g,\alpha_{\g\star\g})$.
It can be verified that
$\xi:(\g\star\g,\alpha_{\g\star\g})\to(\g\cw\g,\alpha_{\g\cw\g})$,
defined by $\xi(x\star y)=x\cw y$, is a well-defined Hom-map such
that $\xi\circ\bar\zeta=id_{\g\cw\g}$, completing the proof.
\end{proof}
Following \cite{Y}, the $n$-th {\it homology} with trivial
coefficients of a Hom-Lie algebra $(\g,\alpha_\g)$, denoted by
$H_n^\alpha(\g)$, is defined to be the $n$-th homology of the
Hom-chain complex $C_\star^\alpha(\g,\K)$, which is defined
similar to $CL_\ast^\alpha(\g,\K)$, by replacing $\otimes$ by
$\wedge$.  It is routine to show that $HL_1^\alpha(\g)\cong
H_1^\alpha(\g)\cong(\g^{ab},\bar\alpha_{\g^{ab}})$ and there is a
natural epimorphism $\varphi:HL_2^\alpha(\g)\twoheadrightarrow
H_2^\alpha(\g)$. We can use Theorem \ref{wedten} together with
Proposition \ref{perfect1} to show that $\varphi$ is an
isomorphism whenever $(\g,\alpha_\g)$ is perfect.
\begin{corollary}
Let $(\g,\alpha_\g)$ be any perfect Hom-Lie algebra. Then there is
an isomorphism of Hom-Leibniz algebras
$(\g\ast\g,\alpha_{\g\ast\g})\cong(\g\star\g,\alpha_{\g\star\g})$
and so $HL^{\alpha}_2(\g)\cong H^{\alpha}_2(\g)$.
\end{corollary}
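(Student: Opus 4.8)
The plan is to chain together the isomorphisms furnished by Proposition \ref{perfect1} and Theorem \ref{wedten}, viewing the perfect Hom-Lie algebra $(\g,\alpha_\g)$ simultaneously as a perfect Hom-Leibniz algebra. A Hom-Lie algebra is in particular a Hom-Leibniz algebra, since its skew-symmetric bracket together with the Hom-Jacobi identity yields the Hom-Leibniz identity, and the defining condition of perfectness $\g=[\g,\g]$ is literally the same in both categories; moreover the bracket makes $\g$ act on itself exactly as recalled at the start of the proof of Theorem \ref{wedten}. Hence Proposition \ref{perfect1} applies to give $(\g\ast\g,\alpha_{\g\ast\g})\cong(\g\cw\g,\alpha_{\g\cw\g})$, and Theorem \ref{wedten} gives $(\g\cw\g,\alpha_{\g\cw\g})\cong(\g\star\g,\alpha_{\g\star\g})$. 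Composing the two isomorphisms establishes the first assertion $(\g\ast\g,\alpha_{\g\ast\g})\cong(\g\star\g,\alpha_{\g\star\g})$.

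For the homology statement I would track the commutator Hom-maps through these isomorphisms. Proposition \ref{perfect1} identifies $HL_2^\alpha(\g)$ with $\ker(\lambda_\g)$, where $\lambda_\g\colon(\g\cw\g,\alpha_{\g\cw\g})\to(\g,\alpha_\g)$ is given by $x\cw y\mapsto[x,y]$. The isomorphism $\xi\colon(\g\star\g,\alpha_{\g\star\g})\to(\g\cw\g,\alpha_{\g\cw\g})$ produced in the proof of Theorem \ref{wedten}, with $\xi(x\star y)=x\cw y$, clearly satisfies $\lambda_\g\circ\xi=\lambda_\g^{\star}$, where $\lambda_\g^{\star}\colon\g\star\g\to\g$ is the Hom-Lie commutator Hom-map sending $x\star y$ to $[x,y]$. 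Consequently $\xi$ carries $\ker(\lambda_\g^{\star})$ isomorphically onto $\ker(\lambda_\g)$. Invoking the Hom-Lie counterpart of Proposition \ref{perfect1}, namely that for a perfect Hom-Lie algebra the extension $\ker(\lambda_\g^{\star})\rightarrowtail\g\star\g\to\g$ is the universal central extension, whence $H_2^\alpha(\g)\cong\ker(\lambda_\g^{\star})$ (cf. \cite{CKR}), I obtain the chain $HL_2^\alpha(\g)\cong\ker(\lambda_\g)\cong\ker(\lambda_\g^{\star})\cong H_2^\alpha(\g)$.

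It then remains to confirm that this composite isomorphism is the natural epimorphism $\varphi$ rather than some unrelated isomorphism between the two second homologies. Since $\varphi$ is induced by the canonical projection $\g\ast\g\twoheadrightarrow\g\star\g$, i.e. by the map $\bar\zeta$ appearing in Theorem \ref{wedten}, and this projection intertwines the Leibniz and Lie commutator Hom-maps into $\g$, its restriction to the kernels is precisely the isomorphism built above; thus $\varphi$ is an isomorphism. The main obstacle I anticipate is not the formal chaining of isomorphisms, which is routine, but the bookkeeping needed to verify that $\xi$ and $\bar\zeta$ genuinely intertwine the two commutator maps with $\g$, and to pin down that the a priori given natural map $\varphi$ on $HL_2^\alpha(\g)$ agrees, under these identifications, with the restriction of $\bar\zeta$ to $\ker(\lambda_\g)$.
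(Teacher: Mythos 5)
Your proposal is correct and follows essentially the same route the paper intends: the corollary is stated as an immediate combination of Proposition \ref{perfect1} (giving $\g\ast\g\cong\g\cw\g$ and $HL_2^\alpha(\g)\cong\ker(\lambda_\g)$ for perfect Hom-Leibniz algebras) with Theorem \ref{wedten} (giving $\g\cw\g\cong\g\star\g$ for Hom-Lie algebras), together with the Hom-Lie universal central extension result from \cite{CKR} identifying $H_2^\alpha(\g)$ with the kernel of the Hom-Lie commutator map. Your additional bookkeeping verifying that the composite isomorphism agrees with the natural epimorphism $\varphi$ is a welcome refinement that the paper leaves implicit.
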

\section{The Hopf's formula for Hom-Leibniz algebras}
In \cite{DMK}, Donadze, Martinez and Khmaladze prove that, for any
Leibniz algebra $\g$, the second homology of $\g$, $HL_2(\g)$, is
isomorphic to the kernel of the commutator map
$\g\cw\g\stackrel{[~,~]}\lo\g$ (here $\cw$ denotes the non-abelian
exterior product of Leibniz algebras). Using this result, they
determine the behavior of the functor $HL_2(-)$ with respect to
the direct sum of Leibniz algebras. Also, applying topological
techniques, they get an eight-term exact sequence in homology of
Leibniz algebras\vspace{-.2cm}
\begin{equation}
\hspace{-.2cm}HL_3(\g)\lo HL_3(\kk)\lo HL_2(\g,\m)\lo HL_2(\g)\lo
HL_2(\kk)\lo \m/[\g,\m]\lo HL_1(\g)\twoheadrightarrow
HL_1(\kk).\vspace{-.2cm}
\end{equation}
from a short exact sequence of Leibniz algebras $\m\rightarrowtail
\g\twoheadrightarrow\kk$ (here, $HL_2(\g,\m)$ denotes the second
relative Chevalley-Eilenberg homology of the pair $(\g,\m)$, which
is isomorphic to $\ker(\m\cw\g\lo\g)$). In particular, they obtain
the Hopf's formula for Leibniz algebras and, moreover, show that
if $\ff/\r$ is a free presentation of $\g$ and ${\frak s}/\r$ is
the induced presentation of $\m$ for some ideal ${\frak s}$ of
$\ff$, then $HL_3(\g)\cong\ker(\r\cw\ff\lo\ff)$ and
$HL_3(\kk)\cong\ker({\frak s}\cw\ff\lo\ff)$. In this section, we
generalize these results to Hom-Leibniz algebras. We start with
the following theorem.
\begin{theorem}
Let $(\g,\alpha_\g)$ be any Hom-Leibniz algebra. Then there is an
isomorphism of Hom-vector spaces $ HL_2^\alpha(\g)
\cong(\ker(\lambda_\g),\alpha_{\ker(\lambda_\g)})$, where
$\lambda_\g:(\g\cw\g,\alpha_{\g\cw\g})\lo(\g,\alpha_\g)$ is the
commutator Hom-map.
\end{theorem}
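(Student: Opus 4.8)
The plan is to fix a free presentation $(\r,\alpha_\r)\stackrel{\iota}\rightarrowtail(\ff,\alpha_\ff)\stackrel{\pi}\twoheadrightarrow(\g,\alpha_\g)$ and to identify \emph{both} $HL_2^\alpha(\g)$ and $\ker(\lambda_\g)$ with the Schur multiplier $\mathcal{M}(\g,\alpha_\g)=\frac{\r\cap[\ff,\ff]}{[\ff,\r]}$. On the homology side the bridge is the Hopf-type formula $HL_2^\alpha(\g)\cong\mathcal{M}(\g,\alpha_\g)$; since $\mathbf{HomLb}$ is semi-abelian and $\mathcal{M}$ is presentation-independent (the invariance quoted from \cite{EV1}), this is the standard low-degree comparison of the Leibniz chain complex $CL_*^\alpha$ with the presentation, and I would simply invoke it. The genuine work is therefore the exterior-product side: producing an $\alpha$-equivariant isomorphism $\ker(\lambda_\g)\cong\mathcal{M}(\g,\alpha_\g)$.

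For that side I would first feed the presentation into the exterior analogue of Proposition~3.1$(iv)$ to obtain the exact sequence $(\r\cw\ff,\alpha_{\r\cw\ff})\stackrel{\partial}\lo(\ff\cw\ff,\alpha_{\ff\cw\ff})\stackrel{\pi_\cw}\twoheadrightarrow(\g\cw\g,\alpha_{\g\cw\g})$, so that $\ker(\pi_\cw)=D:=\Im(\partial)$. The commutator Hom-maps satisfy $\lambda_\g\circ\pi_\cw=\pi\circ\lambda_\ff$, and evaluating on generators gives $\lambda_\ff(f\cw r)=[f,r]$, whence $\lambda_\ff(D)=[\ff,\r]$ and $\Im(\lambda_\ff)=[\ff,\ff]$. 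I would then define $\Phi\colon\ker(\lambda_\g)\to\mathcal{M}(\g,\alpha_\g)$ by $\Phi(z)=\overline{\lambda_\ff(\tilde z)}$, where $\tilde z\in\ff\cw\ff$ is any $\pi_\cw$-preimage of $z$. Here $\pi(\lambda_\ff(\tilde z))=\lambda_\g(z)=0$ forces $\lambda_\ff(\tilde z)\in\r\cap[\ff,\ff]$, and changing the lift alters $\lambda_\ff(\tilde z)$ by $\lambda_\ff(D)=[\ff,\r]$, so $\Phi$ is well defined; it is $\alpha$-equivariant because $\pi_\cw$ and $\lambda_\ff$ commute with the respective endomorphisms. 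Surjectivity is immediate, since any $w\in\r\cap[\ff,\ff]=\Im(\lambda_\ff)\cap\r$ equals $\lambda_\ff(\tilde z)$ for some $\tilde z$, whose image $z=\pi_\cw(\tilde z)$ lies in $\ker(\lambda_\g)$ with $\Phi(z)=\bar w$.

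Injectivity is where the one nontrivial input enters. If $\Phi(z)=0$ then $\lambda_\ff(\tilde z)\in[\ff,\r]=\lambda_\ff(D)$, so I may subtract some $d\in D$ with $\lambda_\ff(d)=\lambda_\ff(\tilde z)$ to arrange $\tilde z-d\in\ker(\lambda_\ff)$; since $d\in D=\ker(\pi_\cw)$, this gives $z=\pi_\cw(\tilde z-d)\in\pi_\cw(\ker\lambda_\ff)$. Hence $\Phi$ is injective \emph{provided} $\ker(\lambda_\ff)=0$ for a free Hom-Leibniz algebra $\ff$, i.e.\ provided the commutator Hom-map $\ff\cw\ff\to[\ff,\ff]$ is an isomorphism. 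This vanishing is the main obstacle I expect: it is the exterior-product incarnation of $HL_2^\alpha(\ff)=0$ for free $\ff$, and I would establish it separately, either from the acyclicity in degrees $\ge 2$ of the Leibniz chain complex of a free algebra, or from the fact that free objects are projective in $\mathbf{HomLb}$, so that their Baer invariant vanishes when computed on the trivial presentation $\ff\xrightarrow{\mathrm{id}}\ff$ (where $\r=0$).

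Granting $\ker(\lambda_\ff)=0$, the map $\Phi$ is an isomorphism of Hom-vector spaces, and composing it with the Hopf formula $HL_2^\alpha(\g)\cong\mathcal{M}(\g,\alpha_\g)$ yields the asserted $HL_2^\alpha(\g)\cong(\ker(\lambda_\g),\alpha_{\ker(\lambda_\g)})$. The only remaining routine-but-necessary bookkeeping is to verify that every comparison map ($\pi_\cw$, $\lambda_\ff$, $\lambda_\g$, and $\Phi$) intertwines the twisting endomorphisms, which follows from their compatibility on generators; this is what upgrades the linear isomorphism to an isomorphism of Hom-vector spaces as required by the statement.
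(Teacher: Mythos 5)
There is a genuine circularity in your proposal relative to what is actually available at this point. You reduce the theorem to two inputs: (a) the Hopf formula $HL_2^\alpha(\g)\cong{\cal M}(\g,\alpha_\g)$, which you propose to ``simply invoke,'' and (b) the vanishing $\ker(\lambda_\ff)=0$ for a free Hom-Leibniz algebra $(\ff,\alpha_\ff)$. Neither is a citable fact here: in this paper the Hopf formula is Corollary 4.5, and it is \emph{deduced from} the theorem you are proving (combined with Corollary 4.4); it is presented as the main new result of the section, not as known background. Worse, both of your suggested justifications for (b) are circular. Deducing $\ker(\lambda_\ff)=0$ from $HL_2^\alpha(\ff)=0$ presupposes the very isomorphism $HL_2^\alpha\cong\ker(\lambda)$ applied to $\ff$; and the Baer-invariant argument only gives ${\cal M}(\ff,\alpha_\ff)=0$, which transfers to $\ker(\lambda_\ff)=0$ only through your map $\Phi$ --- whose injectivity is exactly what requires $\ker(\lambda_\ff)=0$ in the first place. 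So the one step you flag as ``the main obstacle'' is left unproved, and the routes you sketch for it do not close.

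The paper avoids all of this by a direct chain-level argument: it compares the defining relations (3a), (3b), (4a)--(4c) of $\g\cw\g$ with the generators of $\Im(d_3)\subseteq\g\otimes\g$ and exhibits mutually inverse Hom-linear maps between $(\g\otimes\g)/\Im(d_3)$ and $\g\cw\g$ (viewed only as a Hom-vector space, forgetting the bracket relations (5a)--(5d)); restricting this isomorphism over $[\g,\g]$ identifies $HL_2^\alpha(\g)=\ker(\bar d_2)$ with $\ker(\lambda_\g)$. No free presentation is needed. The ingredient you are missing, $\ker(\lambda_\ff)=0$, is the paper's Theorem 4.3, and it is proved there by an explicit, non-homological construction: writing $\ff=T(\vv)/{\frak i}_\vv$ and building a section $[\ff,\ff]\to\ff\cw\ff$, $\overline{ab}\mapsto\bar a\cw\bar b$, out of the tensor-algebra structure. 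If you want to keep your presentation-theoretic route, you must first prove Theorem 4.3 by such an independent argument and also supply an independent proof of the Hopf formula (e.g.\ via the five-term sequence of the chain complex); as written, your argument assumes two downstream consequences of the statement.
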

\begin{proof}
We recall that the linear Hom-maps
$d_2:CL_2^\alpha(\g,\K)=(\g^{\otimes2},\alpha_\g^{\otimes2})\to(\g,\alpha_\g)$
and
$d_3:CL_3^\alpha(\g,\K)=(\g^{\otimes3},\alpha_\g^{\otimes3})\to
CL_2^\alpha(\g,\K)=(\g^{\otimes2},\alpha_\g^{\otimes2})$ are
defined by $d_2(x\otimes y)=[x,y]$ and $d_3(x\otimes y\otimes
z)=-[x,y]\otimes\alpha_\g(z)+\alpha_\g(x)\otimes[y,z]+[x,z]\otimes
\alpha_\g(y)$, respectively. It is straightforward to check that
the linear Hom-map
$\psi:((\g\otimes\g)/\Im(d_3),\overline{\alpha_\g^{\otimes2}})
\to(\g\cw\g,\alpha_{\g\cw\g})$ given by $\psi(x\otimes y
+\Im(d_3))=x\cw y$ is well-defined and surjective. We now have the
following diagram of Hom-vector spaces, in which the rows are
exact:

\tikzset{node distance=3cm, auto}$~~~~$
\begin{tikzpicture}[%
back line/.style={densely dotted}, cross
line/.style={preaction={draw=white, -,line width=6pt}}]
\hspace{1.6cm}\node(A1){\fontsize{9.5}{5}\selectfont$~$};
\node[right
of=A1](B1){\fontsize{9.5}{5}\selectfont$HL_2^\alpha(\g)$};
\node[right of=B1](C1){\fontsize{9.5}{5}\selectfont
$(\ds\frac{\g\otimes\g}{{\rm
Im}(d_3)},\overline{\alpha_\g^{\otimes2}})$}; \node[right
of=C1](D1){\fontsize{9.5}{5}\selectfont$([\g,\g],\alpha_{[\g,\g]})$};
\node (B2)[below of=B1, node distance=1.7cm]{\fontsize{9.5}{5}
\selectfont$\hspace{-.3cm}(\ker(\lambda_\g),\alpha_{\ker(\lambda_\g)})$};
\node (C2)[below of=C1, node
distance=1.7cm]{\fontsize{9.5}{5}\selectfont$(\g\cw\g,\alpha_{\g\cw\g})$};
\node (D2)[below of=D1, node
distance=1.7cm]{\fontsize{9.5}{5}\selectfont$([\g,\g],\alpha_{[\g,\g]})$};

\draw[>->](B1) to node{$\subseteq$}(C1); \draw[->>](B1) to
node[right]{\fontsize{9.5}{5}\selectfont$\psi_{|}$}(B2);
\draw[>->](B2) to node{\fontsize{9.5}{5}\selectfont$~$}(C2);
\draw[->>](C1) to
node[right]{\fontsize{9.5}{5}\selectfont$\psi$}(C2);
\draw[->>](C1) to node{\fontsize{9.5}{5}\selectfont$\bar
d_2$}(D1); \draw[->>](C2) to
node{\fontsize{9.5}{5}\selectfont$\lambda_\g$}(D2);
\draw[>->>](D1) to
node[right]{\fontsize{9.5}{5}\selectfont$=$}(D2);
\end{tikzpicture}\\
Note that here we consider $(\g\cw\g,\alpha_{\g\cw\g})$ as a
Hom-vector space and forget the relations (5a,5b,5c,5d) which just
define the Leibniz product. By comparing the relations
(3a,3b,4a,4b,4c) and the generators of $\Im(d_3)$, we conclude
that $\psi$ has an inverse $\psi'$ that sends $x\cw y$ to
$x\otimes y +\Im(d_3)$. It therefore follows that $\psi_|$ is an
isomorphism, as we wished.
\end{proof}
For any Hom-Lie algebra $(\g,\alpha_\g)$, we set
$J_2^{\alpha}(\g)=(\ker(\mu_\g),\alpha_{\ker(\mu_\g)})$, where
$\mu_\g:(\g\star\g,\alpha_{\g\star\g})\lo(\g,\alpha_\g)$ is the
commutator Hom-map of Hom-Lie algebras. The following corollary is
a direct consequence of Theorems 3.6 and 4.1.
\begin{corollary}
For any Hom-Lie algebra $(\g,\alpha_\g)$, $HL_2^{\alpha}(\g)\cong
J_2^{\alpha}(\g)$.
\end{corollary}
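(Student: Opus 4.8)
The plan is to combine the two isomorphisms furnished by Theorems \ref{wedten} and 4.1 and to observe that the isomorphism of Theorem \ref{wedten} carries the commutator Hom-map of the Hom-Leibniz exterior product onto that of the Hom-Lie tensor product. First, regarding the Hom-Lie algebra $(\g,\alpha_\g)$ as a Hom-Leibniz algebra via $^{x}y=[x,y]=-y^x$, Theorem 4.1 yields an isomorphism of Hom-vector spaces $HL_2^\alpha(\g)\cong(\ker(\lambda_\g),\alpha_{\ker(\lambda_\g)})$, where $\lambda_\g:(\g\cw\g,\alpha_{\g\cw\g})\lo(\g,\alpha_\g)$ is the commutator Hom-map $x\cw y\mapsto[x,y]$. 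It therefore suffices to identify $(\ker(\lambda_\g),\alpha_{\ker(\lambda_\g)})$ with $J_2^{\alpha}(\g)=(\ker(\mu_\g),\alpha_{\ker(\mu_\g)})$.

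Next, I would recall from the proof of Theorem \ref{wedten} the isomorphism of Hom-Leibniz algebras $\xi:(\g\star\g,\alpha_{\g\star\g})\to(\g\cw\g,\alpha_{\g\cw\g})$ determined by $\xi(x\star y)=x\cw y$, with inverse $\bar\zeta$. The key point to check is that $\xi$ intertwines the two commutator maps, i.e. $\lambda_\g\circ\xi=\mu_\g$: on generators one has $\lambda_\g(\xi(x\star y))=\lambda_\g(x\cw y)=[x,y]=\mu_\g(x\star y)$, and since both sides are homomorphisms they agree everywhere. Being an isomorphism satisfying $\lambda_\g\circ\xi=\mu_\g$, the map $\xi$ sends $\ker(\mu_\g)$ bijectively onto $\ker(\lambda_\g)$; indeed $\xi(\ker\mu_\g)\subseteq\ker\lambda_\g$ is immediate, and any $y\in\ker\lambda_\g$ equals $\xi(x)$ with $\mu_\g(x)=\lambda_\g(y)=0$. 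Since $\xi$ is a homomorphism of Hom-Leibniz algebras it commutes with the twisting endomorphisms, so its restriction is an isomorphism $(\ker(\mu_\g),\alpha_{\ker(\mu_\g)})\cong(\ker(\lambda_\g),\alpha_{\ker(\lambda_\g)})$ of Hom-vector spaces.

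Combining the two isomorphisms gives $HL_2^{\alpha}(\g)\cong(\ker(\lambda_\g),\alpha_{\ker(\lambda_\g)})\cong(\ker(\mu_\g),\alpha_{\ker(\mu_\g)})=J_2^{\alpha}(\g)$, as claimed. There is no serious obstacle here: the only point requiring verification is the compatibility $\lambda_\g\circ\xi=\mu_\g$ of the commutator Hom-maps with the isomorphism $\xi$, which is immediate on generators. This is precisely what makes the corollary a direct consequence of Theorems \ref{wedten} and 4.1.
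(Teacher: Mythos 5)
Your proposal is correct and follows exactly the route the paper intends: the paper states the corollary as a direct consequence of Theorems 3.6 and 4.1, and your argument simply fills in the (routine) verification that the isomorphism $\xi$ of Theorem 3.6 intertwines the two commutator Hom-maps and hence restricts to an isomorphism of their kernels.
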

\begin{theorem}
Let $(\ff,\alpha_\ff)$ be any free Hom-Leibniz algebra. Then there
is an isomorphism of Hom-Leibniz algebras
$(\ff\cw\ff,\alpha_{\ff\cw\ff})\cong
([\ff,\ff],\alpha_{[\ff,\ff]})$.
\end{theorem}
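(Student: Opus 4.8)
The plan is to identify the commutator Hom-map with the asserted isomorphism. Consider $\lambda_\ff\colon(\ff\cw\ff,\alpha_{\ff\cw\ff})\lo(\ff,\alpha_\ff)$, $x\cw y\longmapsto[x,y]$. By the exterior analogue of Proposition 3.1$(i)$ it is a homomorphism of Hom-Leibniz algebras, and its image is the subalgebra generated by all products $[x,y]$, which is exactly $([\ff,\ff],\alpha_{[\ff,\ff]})$. Hence $\lambda_\ff$ factors as a surjection $(\ff\cw\ff,\alpha_{\ff\cw\ff})\twoheadrightarrow([\ff,\ff],\alpha_{[\ff,\ff]})$, and it suffices to prove that this map is injective.

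To control $\ker\lambda_\ff$, I would invoke Theorem 4.1, that is, the natural isomorphism of Hom-vector spaces $HL_2^\alpha(\ff)\cong(\ker(\lambda_\ff),\alpha_{\ker(\lambda_\ff)})$. Thus $\lambda_\ff$ is injective precisely when $HL_2^\alpha(\ff)=0$, and the theorem reduces to the vanishing of the second Hom-Leibniz homology of the free algebra $\ff$.

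It remains to show $HL_2^\alpha(\ff)=0$. Recalling from the proof of Theorem 4.1 the relevant portion $(\ff^{\otimes3},\alpha_\ff^{\otimes3})\stackrel{d_3}\lo(\ff^{\otimes2},\alpha_\ff^{\otimes2})\stackrel{d_2}\lo(\ff,\alpha_\ff)$ of the complex $CL_*^\alpha(\ff,\K)$, with $d_2(x\otimes y)=[x,y]$, this is the assertion $\ker(d_2)=\Im(d_3)$. I would prove it at the chain level, exploiting the explicit model $\ff=\ff_\vv=T(\vv)/{\frak i}_\vv$. Since the relations generating ${\frak i}_\vv$ are homogeneous in tensor length and $\alpha_\ff$ preserves length, the algebra is graded, $\ff_\vv=\bigoplus_{n\geq1}\ff_\vv^{(n)}$, and one checks directly that $d_2$ and $d_3$ preserve the induced total-length grading on $\ff^{\otimes2}$ and $\ff^{\otimes3}$. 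Consequently $HL_2^\alpha(\ff)$ splits as a direct sum over degrees, and in each homogeneous component I would construct, for a given cycle $\sum_i x_i\otimes y_i\in\ker(d_2)$, an explicit preimage under $d_3$; organising these choices into a contracting homotopy in degree two then yields the vanishing.

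The main obstacle is precisely this last vanishing. The difficulty is the twist: $\alpha_\ff$ is inserted into every summand of $d_2$ and $d_3$, so the classical computation for ordinary free Leibniz algebras used in \cite{DMK} cannot be transcribed verbatim, and the $\alpha_\ff$-factors must be tracked carefully when solving $d_3(w)=\sum_i x_i\otimes y_i$. I expect to dispose of them by first treating the regular case, in which $\alpha_\ff$ is invertible and its factors can be absorbed, and then reducing the general case to it; alternatively, the acyclicity in degree two of the Hom-Leibniz complex of a free algebra may be extracted from \cite{Y}.
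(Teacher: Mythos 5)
Your reduction to the injectivity of $\lambda_\ff$ is exactly where the paper starts, but from there you leave the essential step unproved. Routing through Theorem 4.1 turns the problem into the vanishing $HL_2^\alpha(\ff)=0$, and your argument for that vanishing is only a plan: you say you \emph{would} construct preimages under $d_3$ and organise them into a contracting homotopy, you yourself flag the twisted boundary maps as the main obstacle, and your two escape routes are both doubtful. A general free Hom-Leibniz algebra $(\ff_\vv,\alpha_\ff)$ is not regular (the structure map $\alpha_\vv$ on the generating Hom-vector space is arbitrary), and there is no evident reduction of the general case to the regular one; nor does \cite{Y} supply the needed acyclicity. Worse, in the paper's own logical order the statement $HL_2^\alpha(\ff)=0$ is \emph{deduced} from this theorem (via Corollary 4.4 and the Hopf formula, Corollary 4.5), so unless you prove the chain-level vanishing completely independently, your route is circular.

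The paper's proof avoids homology altogether and is more elementary: writing $\ff=T(\vv)/{\frak i}_\vv$, it takes the subspace ${\frak a}\subseteq T(\vv)$ spanned by products $ab$, observes that every element of ${\frak a}$ has a \emph{unique} expression $\sum_i a_ib_i$ (a feature special to the tensor-algebra model), and so can define a linear Hom-map $\phi\colon{\frak a}\to\ff\cw\ff$, $ab\mapsto\bar a\cw\bar b$. A direct computation with the exterior-product relations shows $\phi({\frak i}_\vv)=0$, so $\phi$ descends to $\bar\phi\colon[\ff,\ff]\to\ff\cw\ff$ which is a two-sided inverse of $\lambda_\ff$. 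If you want to salvage your approach, the honest thing to do is either carry out the explicit homotopy in the twisted complex in full (tracking the $\alpha_\ff$-factors without assuming regularity), or abandon the homological detour and build the section $\bar\phi$ directly as the paper does.
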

\begin{proof}
We only require to establish that
$\lambda_\ff:(\ff\cw\ff,\alpha_{\ff\cw\ff})\lo([\ff,\ff],\alpha_{[\ff,\ff]})$
is injective. Using the same notations as in Subsection 2.2,
suppose $(\ff,\alpha_\ff)=(T(\vv)/{\frak
i}_{\vv},\tilde\alpha_{\vv})$ for some Hom-vector space
$(\vv,\alpha_\vv)$. Let $({\frak a},\bar\alpha_{\vv})$ be a
subalgebra of $(T(\vv),\bar\alpha_{\vv})$, where ${\frak a}$ is
the $\bar\alpha_{\vv}$-invariant subspace of $T(\vv)$ generated by
the set $\{ab~|~a,b\in T(\vv)\}$. Note that each element
$x\in{\frak a}$ is written as a unique finite sum
$x=\ds\sum_{i=1}^{n}a_ib_i$, $a_i,b_i\in T(\vv)$. Therefore,
$\phi:({\frak
a},\bar\alpha_{\vv})\lo(\ff\cw\ff,\alpha_{\ff\cw\ff})$, given by
$ab\longmapsto\bar a\cw\bar b$, is a well-defined linear Hom-map
(where $\bar a$ denotes the coset $a+{\frak i}_{\vv}\in
T(\vv)/{\frak i}_{\vv}$). For any $x,y,z\in T(\vv)$, we
have\vspace{-.1cm}
\[\phi(\bar\alpha_{\vv}(x)\cdot(y\cdot z)-(x\cdot y)\cdot
\bar\alpha_{\vv}(z)+(x\cdot z)\cdot\bar\alpha_{\vv}(y))=
\tilde\alpha_{\vv}(\bar x)\cw[\bar y,\bar z]-[\bar x,\bar y]\cw
\tilde\alpha_{\vv}(\bar z)+[\bar x,\bar z]\cw
\tilde\alpha_{\vv}(\bar y)=0.\vspace{-.14cm}\] Hence $\phi({\frak
i}_\vv)=0$ and $\phi$ induces a homomorphism
$\bar\phi:([\ff,\ff]={\frak a}/{\frak
i}_{\vv},\alpha_{[\ff,\ff]})\lo(\ff\cw\ff,\alpha_{\ff\cw\ff})$,
with $\lambda_\ff\circ\bar\phi=id_{[\ff,\ff]}$ and
$\bar\phi\circ\lambda_\ff=id_{\ff\cw\ff}$. This completes the
proof.
\end{proof}
From the above theorem, we deduce the following corollary.
\begin{corollary}
Let $(\r,\alpha_\r)\rightarrowtail(\ff,\alpha_\ff)
\stackrel{\pi}\twoheadrightarrow(\g,\alpha_\g)$ be a free
presentation of the Hom-Leibniz algebra $(\g,\alpha_\g)$. Then
there is an isomorphism of Hom-Leibniz algebras
$(\g\cw\g,\alpha_{\g\cw\g})\cong([\ff,\ff]/[\r,\ff],\bar\alpha_{[\ff,\ff]})$.
In particular, ${\cal
M}(\g,\alpha_\g)\cong(\ker(\lambda_\g),\alpha_{\ker(\lambda_\g)})$.
\end{corollary}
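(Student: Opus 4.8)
The plan is to derive both isomorphisms from the preceding theorem, which identifies $(\ff\cw\ff,\alpha_{\ff\cw\ff})$ with $([\ff,\ff],\alpha_{[\ff,\ff]})$ through the commutator Hom-map $\lambda_\ff$, together with the right-exact sequence (2) attached to a short exact sequence of Hom-Leibniz algebras. First I would feed the free presentation $(\r,\alpha_\r)\rightarrowtail(\ff,\alpha_\ff)\stackrel{\pi}\twoheadrightarrow(\g,\alpha_\g)$ into (2), obtaining an exact sequence
\[(\r\cw\ff,\alpha_{\r\cw\ff})\lo(\ff\cw\ff,\alpha_{\ff\cw\ff})\stackrel{\pi_\cw}\twoheadrightarrow(\g\cw\g,\alpha_{\g\cw\g}),\]
with $\pi_\cw$ induced by $\pi$. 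Exactness presents $(\g\cw\g,\alpha_{\g\cw\g})$ as the quotient of $(\ff\cw\ff,\alpha_{\ff\cw\ff})$ by the image of the map $(\r\cw\ff)\lo(\ff\cw\ff)$ that comes from the inclusion $\r\hookrightarrow\ff$.

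Next I would transport this description across $\lambda_\ff$. Because $\lambda_\ff$ sends the generators $r\cw f$ and $f\cw r$ (for $r\in\r$, $f\in\ff$) to $[r,f]$ and $[f,r]$, and is an isomorphism of Hom-Leibniz algebras, it carries the image of $(\r\cw\ff)\lo(\ff\cw\ff)$ exactly onto the Higgins commutator $([\r,\ff],\alpha_{[\r,\ff]})$ inside $([\ff,\ff],\alpha_{[\ff,\ff]})$. Hence $\lambda_\ff$ descends to an isomorphism $(\g\cw\g,\alpha_{\g\cw\g})\cong([\ff,\ff]/[\r,\ff],\bar\alpha_{[\ff,\ff]})$, which is the first assertion; compatibility with the induced endomorphisms is immediate from $\lambda_\ff\circ\alpha_{\ff\cw\ff}=\alpha_{[\ff,\ff]}\circ\lambda_\ff$.

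For the last claim I would use naturality of the commutator Hom-map: the square whose top arrow is $\lambda_\ff$, left arrow $\pi_\cw$, bottom arrow $\lambda_\g$, and right arrow the restriction $\pi|\colon([\ff,\ff],\alpha_{[\ff,\ff]})\lo([\g,\g],\alpha_{[\g,\g]})$ commutes, since $\lambda_\g(\pi_\cw(f_1\cw f_2))=[\pi(f_1),\pi(f_2)]=\pi([f_1,f_2])$. Thus, under the isomorphism built above, $\lambda_\g$ corresponds to the homomorphism $[\ff,\ff]/[\r,\ff]\lo[\g,\g]$ induced by $\pi$, whose kernel is $(\r\cap[\ff,\ff])/[\r,\ff]$. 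By the definition of the Schur multiplier this quotient equals ${\cal M}(\g,\alpha_\g)$, so $\ker(\lambda_\g)\cong{\cal M}(\g,\alpha_\g)$.

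The step I expect to be the main obstacle is verifying that $\lambda_\ff$ maps the image of $\r\cw\ff$ onto precisely $[\r,\ff]$ (equivalently $[\ff,\r]$): one must confirm that the subalgebra generated by the brackets $[r,f]$ and $[f,r]$ is exactly the Higgins commutator, neither larger nor smaller, and then check that the induced $\alpha$-maps render the descended naturality square commutative. Everything else is a formal consequence of the exactness of (2), the preceding theorem, and the definition of ${\cal M}(\g,\alpha_\g)$.
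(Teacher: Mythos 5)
Your argument is correct and follows essentially the same route as the paper: both feed the free presentation into the exact sequence (2), use Theorem 4.3 to transport $(\ff\cw\ff,\alpha_{\ff\cw\ff})$ onto $([\ff,\ff],\alpha_{[\ff,\ff]})$ via $\lambda_\ff$, identify the image of $\r\cw\ff$ (equivalently $\ff\cw\r$) with $([\r,\ff],\alpha_{[\r,\ff]})$, and read off $\ker(\lambda_\g)\cong(\r\cap[\ff,\ff])/[\r,\ff]={\cal M}(\g,\alpha_\g)$ from the resulting commutative diagram. The step you flag as the main obstacle is exactly the one the paper asserts as ``obvious,'' and your justification of it (an isomorphism of Hom-Leibniz algebras carries the subalgebra generated by the elements $r\cw f$, $f\cw r$ onto the subalgebra generated by $[r,f]$, $[f,r]$, i.e.\ the Higgins commutator) is sound.
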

\begin{proof}
Consider the following commutative diagram of Hom-Leibniz algebras
with exacts rows:\vspace{.1cm}

\tikzset{node distance=3cm, auto}$~~~~$
\begin{tikzpicture}[%
back line/.style={densely dotted}, cross
line/.style={preaction={draw=white, -,line width=6pt}}]
\hspace{1.6cm}\node(A1){\fontsize{9.5}{5}\selectfont$~$};
   \node[right of=A1](B1){\fontsize{9.5}{5}\selectfont$(\ff\cw\r,\alpha_{\ff\cw\r})$};
   \node[right of=B1](C1){\fontsize{9.5}{5}\selectfont$(\ff\cw\ff,\alpha_{\ff\cw\ff})$};
   \node[right of=C1](D1){\fontsize{9.5}{5}\selectfont$(\g\cw\g,\alpha_{\g\cw\g})$};
   \node (B2)[below of=B1, node distance=1.7cm]{\fontsize{9.5}{5}\selectfont$(\ker\pi_|,\alpha_{\ker\pi_|})$};
   \node (C2)[below of=C1, node distance=1.7cm]{\fontsize{9.5}{5}\selectfont$([\ff,\ff],\alpha_{[\ff,\ff]})$};
   \node (D2)[below of=D1, node distance=1.7cm]{\fontsize{9.5}{5}\selectfont$([\g,\g],\alpha_{[\g,\g]}),$};

   \draw[->](B1) to node{$\rho$}(C1);
   \draw[->>](B1) to node[right]{\fontsize{9.5}{5}\selectfont${\lambda_\ff|}$}(B2);
   \draw[>->](B2) to node{\fontsize{9.5}{5}\selectfont$~$}(C2);
   \draw[->](C1) to node[right]{\fontsize{9.5}{5}\selectfont$\lambda_\ff$}(C2);
   \draw[->>](C1) to node{\fontsize{9.5}{5}\selectfont$~$}(D1);
   \draw[->>](C2) to node{\fontsize{9.5}{5}\selectfont$\pi_|$}(D2);
   \draw[->>](D1) to node[right]{\fontsize{9.5}{5}\selectfont$\lambda_\g$}(D2);
\end{tikzpicture}\\
Obviously, $\lambda_\ff$ maps the subalgebra
$(\Im(\rho),\alpha_{\ff\cw\ff}|)$ isomorphically onto
$([\r,\ff],\alpha_{[\r,\ff]})$. We therefore conclude from Theorem
4.3 that\vspace{-.2cm}
\[(\g\cw\g,\alpha_{\g\cw\g})\cong(\ds\f{\ff\cw\ff}{\Im(\rho)},\bar\alpha_{\ff\cw\ff})
\cong(\ds\f{[\ff,\ff]}{[\r,\ff]},\bar\alpha_{[\ff,\ff]}),\vspace{-.2cm}\]and
the proof is complete.
\end{proof}
Combining the above corollary with Theorem $4.1$, we get the main
result of this section
\begin{corollary}[Hopf's formula for Hom-Leibniz algebras]
For any Hom-Leibniz algebra $(\g,\alpha_\g)$, there is an
isomorphism of Hom-vector spaces $HL_2^\alpha(\g)\cong{\cal
M}(\g,\alpha_\g)$.
\end{corollary}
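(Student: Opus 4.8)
The plan is to obtain the stated isomorphism by simply composing the two isomorphisms already established in this section, since both $HL_2^\alpha(\g)$ and ${\cal M}(\g,\alpha_\g)$ have been identified with one and the same object, namely the kernel of the commutator Hom-map $\lambda_\g$. No new construction is needed; the substantive work has already been carried out in Theorem $4.1$ (and its supporting results on the exterior product) and in Corollary $4.4$.

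Concretely, I would first invoke Theorem $4.1$, which supplies an isomorphism of Hom-vector spaces
\[
HL_2^\alpha(\g)\cong(\ker(\lambda_\g),\alpha_{\ker(\lambda_\g)}),
\]
where $\lambda_\g:(\g\cw\g,\alpha_{\g\cw\g})\lo(\g,\alpha_\g)$ is the commutator Hom-map. Next I would use the final assertion of Corollary $4.4$: after fixing an arbitrary free presentation $(\r,\alpha_\r)\rightarrowtail(\ff,\alpha_\ff)\twoheadrightarrow(\g,\alpha_\g)$, that corollary yields
\[
{\cal M}(\g,\alpha_\g)\cong(\ker(\lambda_\g),\alpha_{\ker(\lambda_\g)}).
\]
Reading both of these at the level of Hom-vector spaces and composing them gives $HL_2^\alpha(\g)\cong{\cal M}(\g,\alpha_\g)$, which is precisely the claim.

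There is essentially no obstacle left to overcome at this stage, as the difficulty has been front-loaded into Theorems $4.1$ and $4.3$. The only two points requiring a word of care are bookkeeping ones: first, that both isomorphisms are understood in the category of Hom-vector spaces, which is the category in which the conclusion is asserted (recall that $HL_2^\alpha(\g)$ carries only a Hom-vector space structure); and second, that the statement is well posed, i.e.\ independent of the chosen free presentation, which is guaranteed by the earlier remark that the Schur multiplier ${\cal M}(\g,\alpha_\g)$ does not depend on the presentation. With these observations in place, the composition of the two isomorphisms completes the proof.
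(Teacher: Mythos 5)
Your proposal is correct and matches the paper exactly: the authors obtain the Hopf formula precisely by combining Theorem 4.1 ($HL_2^\alpha(\g)\cong(\ker(\lambda_\g),\alpha_{\ker(\lambda_\g)})$) with Corollary 4.4 (${\cal M}(\g,\alpha_\g)\cong(\ker(\lambda_\g),\alpha_{\ker(\lambda_\g)})$). Your additional remarks on working at the level of Hom-vector spaces and on independence of the free presentation are sound bookkeeping but introduce nothing beyond the paper's one-line argument.
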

As an immediate consequence of Corollary 4.5, we conclude that the
second homology of any free Hom-Leibniz algebra is trivial. Also,
combining the above corollary with Theorem \ref{wedten}, we can
determine the structure of the second homology of abelian Leibinz
algebras.
\begin{corollary}
Let $(\g,\alpha_\g)$ be an abelian Hom-Leibniz algebra. Then there
are isomorphisms\vspace{-.2cm}
$$HL_2^\alpha(\g)\cong (\g\cw\g,\alpha_{\g\cw\g})\cong(\g\star\g,\alpha_{\g\star\g}).\vspace{-.2cm}$$
\end{corollary}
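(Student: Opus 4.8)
The plan is to read both isomorphisms off the two main results already proved, the single new input being that the bracket of $(\g,\alpha_\g)$ is identically zero. First I would record that, since $(\g,\alpha_\g)$ is abelian, one has $[x,y]=0$ for all $x,y\in\g$; hence the commutator Hom-map $\lambda_\g:(\g\cw\g,\alpha_{\g\cw\g})\lo(\g,\alpha_\g)$, $x\cw y\longmapsto[x,y]$, is the zero homomorphism. Therefore its kernel is the whole algebra, that is $(\ker(\lambda_\g),\alpha_{\ker(\lambda_\g)})=(\g\cw\g,\alpha_{\g\cw\g})$.

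Feeding this into Theorem 4.1, which provides an isomorphism of Hom-vector spaces $HL_2^\alpha(\g)\cong(\ker(\lambda_\g),\alpha_{\ker(\lambda_\g)})$, immediately gives the first asserted isomorphism $HL_2^\alpha(\g)\cong(\g\cw\g,\alpha_{\g\cw\g})$. For the second, I would observe that an abelian Hom-Leibniz algebra is in particular a Hom-Lie algebra: its zero bracket is vacuously skew-symmetric, so $(\g,\alpha_\g)$ meets the hypothesis of Theorem \ref{wedten}. Applying that theorem then yields the isomorphism of Hom-Leibniz algebras $(\g\cw\g,\alpha_{\g\cw\g})\cong(\g\star\g,\alpha_{\g\star\g})$, and composing the two isomorphisms completes the chain.

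There is essentially no obstacle in this argument, since all the real work resides in Theorems 4.1 and \ref{wedten}; the corollary merely specializes them to the abelian case, where the commutator map degenerates. The only point deserving a moment's attention is to verify that the hypotheses of Theorem \ref{wedten} are genuinely satisfied---namely that the abelian Hom-Leibniz algebra $(\g,\alpha_\g)$ legitimately carries a Hom-Lie structure---so that the Hom-Lie tensor product $(\g\star\g,\alpha_{\g\star\g})$ is defined and its comparison with the Hom-Leibniz exterior product $(\g\cw\g,\alpha_{\g\cw\g})$ is applicable.
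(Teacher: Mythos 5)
Your proposal is correct and follows essentially the paper's route: the paper obtains the first isomorphism by combining the Hopf formula (Corollary 4.5) with the identification ${\cal M}(\g,\alpha_\g)\cong\ker(\lambda_\g)$, while you go directly through Theorem 4.1, but both reduce to the observation that $\lambda_\g$ vanishes when $\g$ is abelian, and both invoke Theorem 3.6 for the second isomorphism after noting that the zero bracket makes $(\g,\alpha_\g)$ a Hom-Lie algebra.
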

Using Theorem 4.1, we generalize the exact sequence $(3)$ for
Hom-Leibniz algebras.
\begin{theorem}
Let $e: (\m,\alpha_\m)\rightarrowtail(\kk,\alpha_\kk)
\twoheadrightarrow(\g,\alpha_\g)$ be an extension of Hom-Leibniz
algebras. Let $(\r,\alpha_\r)\rightarrowtail(\ff,\alpha_\ff)
\twoheadrightarrow(\kk,\alpha_\kk)$ be a free presentation of
$(\kk,\alpha_\kk)$ and $(\m,\alpha_\m)\cong({\frak
s}/\r,\bar\alpha_{\frak s})$ for some ideal $({\frak
s},\alpha_{\frak s})$ of $(\ff,\alpha_\ff)$. Then there is an
exact sequence of Hom-vector spaces\vspace{-.1cm}
\[
(\ker(\r\cw\ff\lo\ff),\alpha_{\r\cw\ff}|)\lo(\ker({\frak s}\cw
\ff\lo\ff),\alpha_{{\frak s}\cw\ff}|)\lo(\ker(\m\cw\kk\lo
\kk),\alpha_{\m\cw\kk}|)\lo HL_2^{\alpha}(\kk)\vspace{-.08cm}\]
\begin{equation}
\hspace{1.2cm}\lo
HL_2^{\alpha}(\g)\lo(\f{\m}{[\m,\kk]},\bar\alpha_\m)\lo
HL_1^{\alpha}(\kk)\twoheadrightarrow
HL_1^{\alpha}(\g).\vspace{-.1cm}
\end{equation}
Moreover, if the extension $e$ is split, then the sequence $(4)$
induces a short exact sequence\vspace{-.12cm}
\begin{equation}
(\ker(\m\cw\kk\lo\kk),\alpha_{\m\cw\kk}|)\rightarrowtail
HL_2^{\alpha}(\kk)\twoheadrightarrow
HL_2^{\alpha}(\g).\vspace{-.2cm}
\end{equation}
\end{theorem}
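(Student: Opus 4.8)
The plan is to obtain the eight-term sequence in two stages: the last six terms come from a single application of the Snake Lemma to the commutator maps over $e$, and the first three terms are spliced in by descending to the free presentation. First I would form the commutative ladder whose bottom row is $e:\m\rightarrowtail\kk\twoheadrightarrow\g$ and whose top row is the exact sequence $\m\cw\kk\lo\kk\cw\kk\twoheadrightarrow\g\cw\g$ of $(2)$, with vertical arrows the commutator Hom-maps $\lambda'\colon\m\cw\kk\lo\m$, $\lambda_\kk\colon\kk\cw\kk\lo\kk$ and $\lambda_\g\colon\g\cw\g\lo\g$; here $\lambda'$ lands in $\m$ because $\m$ is an ideal, so $\ker\lambda'=\ker(\m\cw\kk\lo\kk)$. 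Both rows satisfy the hypotheses of the Snake Lemma, which is available since $\bf{HomLb}$ is semi-abelian. Using Theorem $4.1$ to identify $\ker\lambda_\kk\cong HL_2^\alpha(\kk)$ and $\ker\lambda_\g\cong HL_2^\alpha(\g)$, and the isomorphism $HL_1^\alpha(-)\cong(-)^{ab}$ to identify ${\rm coker}\,\lambda_\kk\cong HL_1^\alpha(\kk)$, ${\rm coker}\,\lambda_\g\cong HL_1^\alpha(\g)$ and ${\rm coker}\,\lambda'=\m/[\m,\kk]$, the Snake Lemma produces precisely the last six terms, with the final epimorphism onto $HL_1^\alpha(\g)$ coming from surjectivity of $\kk\to\g$. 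Its leftmost map is $\bar\rho\colon\ker(\m\cw\kk\lo\kk)\lo HL_2^\alpha(\kk)$ induced by $\rho\colon\m\cw\kk\lo\kk\cw\kk$, and since $\lambda'=\lambda_\kk\circ\rho$ one gets $\ker\bar\rho=\ker\rho$.

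For the first three terms I would pass to the free presentation, writing $\kk=\ff/\r$, $\m={\frak s}/\r$, $\g=\ff/{\frak s}$ with $\r\subseteq{\frak s}\subseteq\ff$. By Theorem $4.3$ the map $\lambda_\ff\colon\ff\cw\ff\lo[\ff,\ff]$ is an isomorphism, so I would model everything inside $\ff\cw\ff$: the images $R:=\Im(\r\cw\ff\lo\ff\cw\ff)$ and $S:=\Im({\frak s}\cw\ff\lo\ff\cw\ff)$ correspond to $[\r,\ff]\subseteq[{\frak s},\ff]$, and by Corollary $4.4$ one has $\kk\cw\kk\cong(\ff\cw\ff)/R$. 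Since $\lambda_\ff$ is injective, $\ker(\r\cw\ff\lo\ff)=\ker(\r\cw\ff\lo\ff\cw\ff)$ and likewise for ${\frak s}$, so the outer two terms are the kernels of the corestrictions $\lambda_1'\colon\r\cw\ff\twoheadrightarrow R$ and $\lambda_2'\colon{\frak s}\cw\ff\twoheadrightarrow S$. The inclusion $\r\hookrightarrow{\frak s}$ and the projections ${\frak s}\twoheadrightarrow\m$, $\ff\twoheadrightarrow\kk$ give $\r\cw\ff\stackrel{j}{\to}{\frak s}\cw\ff\stackrel{p}{\to}\m\cw\kk$; the exterior analogue of Proposition $3.1(iv)$ shows this is exact with $p$ surjective and $\ker p=\Im(j)$, where one crucially uses $\r\subseteq{\frak s}$ to absorb $\Im({\frak s}\cw\r\lo{\frak s}\cw\ff)$ into $\Im(j)$ (each generator $s\cw r$, $r\cw s$ with $r\in\r$ is already a generator of $\Im(j)$). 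Restricting $j$ and $p$ to the kernels of $\lambda_1',\lambda_2'$ yields the first two arrows of $(4)$.

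Exactness of these terms, and their gluing onto the Snake part, is then a finite chase inside $\ff\cw\ff$. The inclusion $R\subseteq S$ forces $(\lambda_2')^{-1}(R)=\ker\lambda_2'+\Im(j)$, whence $p(\ker\lambda_2')=p((\lambda_2')^{-1}(R))=\ker\rho$; this is exactly $\Im(\ker({\frak s}\cw\ff\lo\ff)\lo\ker(\m\cw\kk\lo\kk))=\ker\bar\rho$, i.e.\ exactness where the left block meets $HL_2^\alpha(\kk)$. The injectivity of $R\hookrightarrow S$ gives $\ker\lambda_2'\cap\Im(j)=j(\ker\lambda_1')$, i.e.\ exactness at $\ker({\frak s}\cw\ff\lo\ff)$. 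I expect the main obstacle to be precisely the exterior right-exactness invoked above — establishing $\ker p=\Im(j)$ as the exterior analogue of Proposition $3.1(iv)$, and then keeping the several subspaces $R\subseteq S\subseteq\ff\cw\ff$, $\Im(j)=\ker p$ and $\ker\rho$ straight, so that the kernels of the commutator maps match up correctly.

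Finally, for the split case I would invoke Proposition $3.2$: a splitting of $e$ makes the top row $\m\cw\kk\rightarrowtail\kk\cw\kk\twoheadrightarrow\g\cw\g$ of the main ladder short exact, so in the Snake sequence $\ker(\m\cw\kk\lo\kk)\lo HL_2^\alpha(\kk)$ becomes injective. Moreover a section $s\colon\g\to\kk$ of $e$ induces $s\cw s\colon\g\cw\g\lo\kk\cw\kk$ commuting with the commutator maps, hence a section of $HL_2^\alpha(\kk)\lo HL_2^\alpha(\g)$; this makes the latter an epimorphism and annihilates the connecting map $HL_2^\alpha(\g)\lo\m/[\m,\kk]$, leaving exactly the short exact sequence $(5)$.
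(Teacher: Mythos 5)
Your proposal is correct and shares the paper's overall architecture --- the last six terms come from the Snake Lemma applied to the commutator maps over the ladder $\m\cw\kk\to\kk\cw\kk\twoheadrightarrow\g\cw\g$, and the first three terms are supplied by the free presentation and glued on at $\ker(\m\cw\kk\to\kk)$ --- but you execute the left half differently. The paper applies the Snake Lemma a second time, to the ladder $\r\cw\ff\to{\frak s}\cw\ff\to\m\cw\kk$ over $\r\rightarrowtail{\frak s}\twoheadrightarrow\m$, obtaining $\ker\lambda_\r\to\ker\lambda_{\frak s}\to\ker\lambda_\m\stackrel{\Delta_2}{\to}\r/[\r,\ff]$, and then glues the two sequences by observing that $\Delta_2$ factors as the inclusion $HL_2^\alpha(\kk)=(\r\cap[\ff,\ff])/[\r,\ff]\hookrightarrow\r/[\r,\ff]$ composed with the Hopf isomorphism $\beta$ and the map $\delta$, so that $\ker\Delta_2=\ker\delta$; you instead unpack this into an explicit chase inside $\ff\cw\ff$ using Theorem 4.3 and Corollary 4.4, computing $p(\ker\lambda_2')=\ker\rho=\ker\bar\rho$ and $\ker\lambda_2'\cap\Im(j)=j(\ker\lambda_1')$ directly. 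The two are mathematically equivalent (both ultimately rest on the exterior right-exactness of Proposition 3.1(iv) for the rows, which you rightly flag as the real point needing care); the paper's version is shorter, yours makes the junction with $HL_2^\alpha(\kk)$ transparent without invoking the connecting homomorphism. For the split case the paper shows surjectivity of $HL_2^\alpha(\kk)\to HL_2^\alpha(\g)$ via the decomposition $\kk=\m\dot{+}\g$, the identity $[\kk,\kk]\cap\m=[\m,\kk]$ and a $3\times3$ diagram, whereas you use the functorial section $s\cw s$ restricted to $\ker\lambda_\g$, which is cleaner and equally valid; injectivity on the left comes from Proposition 3.3 in both treatments (note it is Proposition 3.3, not 3.2, in the paper's numbering).
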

\begin{proof}
Consider the following commutative diagrams:\vspace{.3cm}

\tikzset{node distance=2.4cm, auto}$~~~~$
\begin{tikzpicture}[%
back line/.style={densely dotted}, cross
line/.style={preaction={draw=white, -,line width=6pt}}]
\hspace{-2.2cm}\node(A1){\fontsize{9.5}{5}\selectfont$~$};
  \node[right of=A1](B1){\fontsize{9.5}{5}\selectfont$(\m\cw\kk,\alpha_{\m\cw\kk})$};
  \node[right of=B1](C1){\fontsize{9.5}{5}\selectfont$(\kk\cw\kk,\alpha_{\kk\cw\kk})$};
  \node[right of=C1](D1){\fontsize{9.5}{5}\selectfont$(\g\cw\g,\alpha_{\g\cw\g})$};
  \node (B2)[below of=B1, node distance=1.7cm]{\fontsize{9.5}{5}\selectfont$(\m,\alpha_{\m})$};
  \node (C2)[below of=C1, node distance=1.7cm]{\fontsize{9.5}{5}\selectfont$(\kk,\alpha_{\kk})$};
  \node (D2)[below of=D1, node distance=1.7cm]{\fontsize{9.5}{5}\selectfont$(\g,\alpha_{\g})$};

  \draw[->](B1) to node{$~$}(C1);
  \draw[->](B1) to node[right]{\fontsize{9.5}{5}\selectfont$\lambda_\m$}(B2);
  \draw[>->](B2) to node{\fontsize{9.5}{5}\selectfont$~$}(C2);
  \draw[->](C1) to node[right]{\fontsize{9.5}{5}\selectfont$\lambda_\kk$}(C2);
  \draw[->>](C1) to node{\fontsize{9.5}{5}\selectfont$~$}(D1);
  \draw[->>](C2) to node{\fontsize{9.5}{5}\selectfont$~$}(D2);
  \draw[->](D1) to node[right]{\fontsize{9.5}{5}\selectfont$\lambda_\g$}(D2);
\end{tikzpicture}\\\vspace{-2.3cm}

\tikzset{node distance=2.1cm, auto} $~~~~$
\begin{tikzpicture}[%
back line/.style={densely dotted}, cross
line/.style={preaction={draw=white, -,line width=4pt}}]
\hspace{6.7cm} \node(A1){and};

\end{tikzpicture}\\\vspace{-1.96cm}

\tikzset{node distance=2.4cm, auto}$~~~~$
\begin{tikzpicture}[%
back line/.style={densely dotted}, cross
line/.style={preaction={draw=white, -,line width=6pt}}]
\hspace{6.5cm}\node(A1){\fontsize{9.5}{5}\selectfont$~$};
   \node[right of=A1](B1){\fontsize{9.5}{5}\selectfont$(\r\cw\ff,\alpha_{\r\cw\ff})$};
   \node[right of=B1](C1){\fontsize{9.5}{5}\selectfont$({\frak s}\cw\ff,\alpha_{{\frak s}\cw\ff})$};
   \node[right of=C1](D1){\fontsize{9.5}{5}\selectfont$(\m\cw\g,\alpha_{\m\cw\g})$};
   \node (B2)[below of=B1, node distance=1.7cm]{\fontsize{9.5}{5}\selectfont$(\r,\alpha_{\r})$};
   \node (C2)[below of=C1, node distance=1.7cm]{\fontsize{9.5}{5}\selectfont$({\frak s},\alpha_{\frak s})$};
   \node (D2)[below of=D1, node distance=1.7cm]{\fontsize{9.5}{5}\selectfont$(\m,\alpha_{\m})$};

   \draw[->](B1) to node{$~$}(C1);
   \draw[->](B1) to node[right]{\fontsize{9.5}{5}\selectfont$\lambda_\r$}(B2);
   \draw[>->](B2) to node{\fontsize{9.5}{5}\selectfont$~$}(C2);
   \draw[->](C1) to node[right]{\fontsize{9.5}{5}\selectfont$\lambda_{\frak s}$}(C2);
   \draw[->>](C1) to node{\fontsize{9.5}{5}\selectfont$~$}(D1);
   \draw[->>](C2) to node{\fontsize{9.5}{5}\selectfont$~$}(D2);
   \draw[->](D1) to node[right]{\fontsize{9.5}{5}\selectfont$\lambda_\m$}(D2);
\end{tikzpicture}

\hspace{-.65cm}where, the rows are exact. Applying the Snake Lemma
to these diagrams, we get the exact sequences\vspace{.15cm}\\
$(\ker(\lambda_\m),\alpha_{\ker(\lambda_\m)})\stackrel{\delta}\lo(\ker(\lambda_\kk),\alpha_{\ker(\lambda_\kk)})
\lo(\ker(\lambda_\g),\alpha_{\ker(\lambda_\g)})\stackrel{\Delta_1}\lo
(\ds\f{\m}{[\m,\g]},\bar\alpha_\m)\lo
HL_1^{\alpha}(\kk)\twoheadrightarrow
HL_1^{\alpha}(\g)$,\vspace{.15cm}\\
$(\ker(\lambda_\r),\alpha_{\ker(\lambda_\r)})\lo(\ker(\lambda_{\frak
s}),\alpha_{\ker(\lambda_{\frak s})})
\lo(\ker(\lambda_\m),\alpha_{\ker(\lambda_\m)})\stackrel{\Delta_2}\lo
(\ds\f{\r}{[\r,\ff]},\bar\alpha_\r)$,\vspace{.15cm}\\ where
$\Delta_i$, $i=1,2$, is the connecting homomorphism of Hom-vector
spaces. It now remains to show that $\ker\Delta_2=\ker\delta$. But
this immediately follows from the commutative diagram\vspace{.4cm}

\tikzset{node distance=4.cm, auto} $~~~~$
\begin{tikzpicture}[%
back line/.style={densely dotted}, cross
line/.style={preaction={draw=white, -,line width=4pt}}]
\hspace{4cm}
   \node(A1){$(\ker(\lambda_\m),\alpha_{\ker(\lambda_\m)})$};
   \node [right of=A1] (B1) {$\ds(\f{\r}{[\r,\ff]},\bar\alpha_\r)$};
   \node (A2) [below of=A1, node distance=1.6cm]
         {$(\ker(\lambda_\kk),\alpha_{\ker(\lambda_\kk)})$};
   \node [right of=A2] (B2) {$HL_2^{\alpha}(\kk)$,};

   \draw[->](A1) to node {$\delta$} (A2);
   \draw[<-<](B1) to node {$\subseteq$} (B2);
   \draw[->](A1) to node {$\Delta_2$} (B1);
   \draw[>->>](A2) to node {$\beta$} (B2);
\end{tikzpicture}\\
where $\beta$ is the isomorphism obtained in Theorem 4.1. Thus,
the sequence $(4)$ is exact.

We now verify the exactness of $(5)$. According to the points
mentioned at the end of Subsection 2.1, we can assume that
$(\g,\alpha_\g)$ is a subalgebra of $(\kk,\alpha_\kk)$ such that
$\kk=\m\dot{+}\g$. Then \vspace{-.15cm}
\[[\kk,\kk]\cap\m=([\g,\g]+[\m,\kk])\cap
\m=([\g,\g]\cap\m)+[\m,\kk]=[\m,\kk].\vspace{-.15cm}\] Also, owing
to Proposition 3.3, we may consider $(\m\cw\kk,\alpha_{\m\cw\kk})$
as a subalgebra of $(\kk\cw\kk,\alpha_{\kk\cw\kk})$, which implies
that $(\m\cw\kk)\cap\ker(\lambda_\kk)=\ker(\lambda_\m)$. So, we
obtain the following commutative diagram of Hom-Leibniz
algebras:\vspace{.2cm}

\tikzset{node distance=3.2cm, auto}$~~~~$
\begin{tikzpicture}[%
back line/.style={densely dotted}, cross
line/.style={preaction={draw=white, -,line width=6pt}}]
\hspace{1.6cm}\node(A1){\fontsize{9.5}{5}\selectfont$~$};
   \node[right of=A1](B1){\fontsize{9.5}{5}\selectfont$\hspace{-.8cm}(\ker(\lambda_m),\alpha_{\ker(\lambda_m)})$};
   \node[right of=B1](C1){\fontsize{9.5}{5}\selectfont$HL_2^{\alpha}(\kk)$};
   \node[right of=C1](D1){\fontsize{9.5}{5}\selectfont$HL_2^{\alpha}(\g)$};

   \node(B2)[below of=B1, node distance=1.7cm]{\fontsize{9.5}{5}\selectfont
             $\hspace{-1cm}(\m\cw\kk,\alpha_{\m\cw\kk})$};
   \node(C2)[below of=C1, node distance=1.7cm]{\fontsize{9.5}{5}\selectfont
             $(\kk\cw\kk,\alpha_{\kk\cw\kk})$};
   \node(D2)[below of=D1, node distance=1.7cm]{\fontsize{9.5}{5}\selectfont$(\g\cw\g,\alpha_{\g\cw\g})$};

   \node(B3)[below of=B2, node distance=1.7cm]{\fontsize{9.5}{5}\selectfont$\hspace{-1cm}([\m,\kk],\alpha_{[\m,\kk]})$};
   \node(C3)[below of=C2, node distance=1.7cm]{\fontsize{9.5}{5}\selectfont$([\kk,\kk],\alpha_{[\kk,\kk]})$};
   \node(D3)[below of=D2, node distance=1.7cm]{\fontsize{9.5}{5}\selectfont$([\g,\g],\alpha_{[\g,\g]})$,};

   \draw[>->](B1) to node{\fontsize{7.5}{5}\selectfont$~$}(C1);
   \draw[->](C1) to node{\fontsize{7.5}{5}\selectfont$\vartheta|$}(D1);

   \draw[>->](B2) to node{\fontsize{7.5}{5}\selectfont$~$}(C2);
   \draw[->>](C2) to node{\fontsize{7.5}{5}\selectfont$\vartheta$}(D2);

   \draw[>->](B3) to node{\fontsize{7.5}{5}\selectfont$~$}(C3);
   \draw[->>](C3) to node{\fontsize{7.5}{5}\selectfont$~$}(D3);

   \draw[>->](B1) to node[right]{\fontsize{7.5}{5}\selectfont$~$}(B2);
   \draw[>->](C1) to node[right]{\fontsize{7.5}{5}\selectfont$~$}(C2);
   \draw[>->](D1)to node[right]{\fontsize{7.5}{5}\selectfont$~$}(D2);

   \draw[->>](B2) to node[right]{\fontsize{7.5}{5}\selectfont$~$}(B3);
   \draw[->>](C2) to node[right]{\fontsize{7.5}{5}\selectfont$~$}(C3);
   \draw[>->>](D2)to node[right]{\fontsize{7.5}{5}\selectfont$~$}(D3);
\end{tikzpicture}\\
where the columns and rows are exact. One easily sees from the
above diagram that $\vartheta|$ is surjective.
\end{proof}
Using the above theorem and the explanations at the beginning
of this section, we have the following conjecture.\vspace{.15cm}\\
{\bf\large Conjecture.} With the assumptions of Theorem 4.7,
$HL_3^\alpha(\ff)=0$ and there is an isomorphism of Hom-vector
spaces $HL_3^{\alpha}(\g)\cong(\ker({\frak
s}\cw\ff\lo\ff),\alpha_{{\frak s}\cw\ff}|)$.\vspace{.15cm}

Finally, we close this section by examining the behavior of
functors $JL_2^{\alpha}(-)$ and $HL_2^{\alpha}(-)$ to the direct
sum of Hom-Leibniz algebras.
\begin{theorem}
Let $(\g_1,\alpha_{\g_1})$ and $(\g_2,\alpha_{\g_2})$ be two
Hom-Leibniz algebras. Then there are isomorphisms of Hom-vector
spaces\vspace{-.1cm}
\[JL_2^\alpha(\g_1\oplus\g_2)\cong JL_2^\alpha(\g_1)\oplus
JL_2^\alpha(\g_2)\oplus(\g_1^{ab}\ast\g_2^{ab},
\alpha_{\g_1^{ab}\ast\g_2^{ab}})\oplus(\g_2^{ab}\ast\g_1^{ab},
\alpha_{\g_2^{ab}\ast\g_1^{ab}}),\vspace{-.2cm}\]
\[\hspace{-3.5cm}HL_2^{\alpha}(\g_1\oplus\g_2)\cong HL_2^{\alpha}(\g_1)\oplus HL_2^{\alpha}(\g_2)
\oplus(\g_1^{ab}\ast\g_2^{ab},
\alpha_{\g_1^{ab}\ast\g_2^{ab}}).\vspace{-.1cm}\] In particular,
if the endomorphisms $\alpha_{\g_1}$ and $\alpha_{\g_2}$ are
surjective, then\vspace{-.15cm}
\[HL_2^{\alpha}(\g_1\oplus\g_2)\cong HL_2^{\alpha}(\g_1)\oplus HL_2^{\alpha}(\g_2)
\oplus(\g_1^{ab}\otimes\g_2^{ab},
\alpha_{\g_1^{ab}\otimes\g_2^{ab}})\oplus(\g_2^{ab}\otimes\g_1^{ab},
\alpha_{\g_2^{ab}\otimes\g_1^{ab}}).\vspace{-.15cm}\]
\end{theorem}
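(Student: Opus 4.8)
The plan is to read off both functors from the kernel descriptions $JL_2^\alpha(\g)=(\ker(\lambda_\g),\alpha_{\ker(\lambda_\g)})$ and $HL_2^\alpha(\g)\cong(\ker(\lambda_\g),\alpha_{\ker(\lambda_\g)})$ (the latter by Theorem 4.1), after first splitting the tensor and exterior squares of $\g=\g_1\oplus\g_2$. Expanding a generator $x\ast y$ with $x=x_1+x_2$, $y=y_1+y_2$ ($x_i,y_i\in\g_i$) by bilinearity (relations (2a)--(2d)), I would establish an isomorphism of Hom-vector spaces
\[(\g_1\oplus\g_2)\ast(\g_1\oplus\g_2)\cong(\g_1\ast\g_1)\oplus(\g_2\ast\g_2)\oplus(\g_1\ast\g_2)\oplus(\g_2\ast\g_1),\]
the two cross terms carrying the trivial action coming from $[\g_1,\g_2]=0$. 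Spanning is immediate from bilinearity, while independence together with $\alpha$-equivariance follows from the projections $\g\twoheadrightarrow\g_i$, which induce a map $\g\ast\g\to\bigoplus_{i,j}\g_i\ast\g_j$ inverse to the sum of the canonical maps. I emphasise that the theorem asks only for Hom-vector space isomorphisms, so I need not worry that the four pieces fail to commute as subalgebras.

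Next I would trace the commutator Hom-map $\lambda_\g$ through this splitting. On each diagonal piece it restricts to $\lambda_{\g_i}$, with image in $\g_i$, while on the cross pieces it vanishes, since $\lambda_\g(x_i\ast y_j)=[x_i,y_j]=0$ for $i\neq j$. Because $\g=\g_1\dot{+}\g_2$ as a vector space, $\lambda_\g$ is the direct sum $\lambda_{\g_1}\oplus\lambda_{\g_2}\oplus 0\oplus 0$, whence
\[\ker(\lambda_\g)\cong\ker(\lambda_{\g_1})\oplus\ker(\lambda_{\g_2})\oplus(\g_1\ast\g_2)\oplus(\g_2\ast\g_1)\]
$\alpha$-equivariantly. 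Identifying the cross pieces with $(\g_i^{ab}\ast\g_j^{ab},\alpha_{\g_i^{ab}\ast\g_j^{ab}})$ — the content I address below — then yields the first displayed isomorphism for $JL_2^\alpha$, since $\ker(\lambda_{\g_i})=JL_2^\alpha(\g_i)$.

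For $HL_2^\alpha$ I would repeat the argument with $\ast$ replaced by $\cw$, invoking Theorem 4.1. The only new ingredient is the quotient $\g\ast\g\to\g\cw\g=\g\ast\g/\g\square\g$: the diagonal submodule $\g\square\g$ is spanned by the differences of the two generator families attached to a common pair, so on the cross part it identifies the ``$m\ast n$'' copy inside $\g_1\ast\g_2$ with the ``$n\ast m$'' copy inside $\g_2\ast\g_1$ (and symmetrically). Hence the two cross summands present in $JL_2^\alpha$ fuse into the single copy $(\g_1^{ab}\ast\g_2^{ab},\alpha_{\g_1^{ab}\ast\g_2^{ab}})$, which is exactly the gap between the two formulas, giving the second isomorphism. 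The ``in particular'' clause follows by applying Proposition 3.1(iii) to this cross term: when $\alpha_{\g_1}$ and $\alpha_{\g_2}$ are surjective so are the induced maps on the abelianisations, and, the actions being trivial, $\g_1^{ab}\ast\g_2^{ab}\cong(\g_1^{ab}\otimes\g_2^{ab})\oplus(\g_2^{ab}\otimes\g_1^{ab})$.

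I expect the main obstacle to be precisely the cross-term identification $\g_i\ast\g_j\cong\g_i^{ab}\ast\g_j^{ab}$. The defining relations directly annihilate only products of the shape $\alpha_{\g_i}(x)\ast[y,y']$ and $[x,x']\ast\alpha_{\g_j}(y)$, that is, commutators paired with elements of $\Im(\alpha)$; showing that the entire commutator subalgebra of each factor drops out of the cross term, so that it depends only on the abelianisations, is the delicate step on which the theorem turns. This is transparent when the structure maps $\alpha_{\g_i}$ are surjective — the regime in which the sharp tensor form is recorded via Proposition 3.1(iii) — and is the crux of the general case; once it is in hand, the remaining bookkeeping (the kernel splitting and the $\square$-collapse) is routine.
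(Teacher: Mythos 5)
Your overall strategy---split the tensor and exterior squares of $\g_1\oplus\g_2$, trace the commutator Hom-map through the splitting, and read off the kernels via the definition of $JL_2^\alpha$ and Theorem 4.1---is the same as the paper's, which establishes the auxiliary isomorphism $(\kk\ast\g,\alpha_{\kk\ast\g})\cong((\kk\ast\g_1)\oplus(\kk\ast\g_2),\alpha_{\oplus})$ for an ideal $(\kk,\alpha_\kk)$ by exhibiting explicit mutually inverse homomorphisms and applies it twice. But your proposal contains a genuine gap, and moreover you have located it in the wrong place. You first assert a Hom-vector-space decomposition $(\g_1\oplus\g_2)\ast(\g_1\oplus\g_2)\cong(\g_1\ast\g_1)\oplus(\g_2\ast\g_2)\oplus(\g_1\ast\g_2)\oplus(\g_2\ast\g_1)$ with \emph{unabelianised} cross factors, claiming it is induced by the projections $\g\twoheadrightarrow\g_i$, and only afterwards propose to identify $\g_i\ast\g_j$ with $\g_i^{ab}\ast\g_j^{ab}$. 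The first step already fails: the assignment $x\ast y\mapsto\pi_1(x)\ast\pi_2(y)$ need not respect relation (5a). That relation in $\g\ast\g$ reads $[x,y]\ast[x',y']=[x\ast y,\,x'\ast y']$, so its image in $\g_1\ast\g_2$ must be a Leibniz bracket there, hence $0$ (the mutual actions being trivial); but the element $[x_1,y_1]\ast[x'_2,y'_2]$ of $\g_1\ast\g_2$ is not forced to vanish by the defining relations, since (4a)--(4d) only annihilate commutators paired with elements of $\Im(\alpha)$. Thus the cross factors must carry the abelianisations from the outset, and checking that the projection-induced and inclusion-induced Hom-maps into and out of $\g_i^{ab}\ast\g_j^{ab}$ are well defined and mutually inverse is exactly the content that is missing; it cannot be postponed to a separate ``identification'' step applied after the fact.

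Compounding this, you explicitly acknowledge that the identification of the cross terms with the tensor product of the abelianisations is ``the delicate step on which the theorem turns'' and you do not supply an argument for it, remarking only that it is transparent when the $\alpha_{\g_i}$ are surjective and that ``once it is in hand, the remaining bookkeeping is routine.'' A proof that defers its own crux is not a proof. The remaining ingredients of your proposal (the vanishing of $\lambda_\g$ on the cross part, the resulting kernel splitting, the collapse of the two cross summands into one under the quotient by $\g\square\g$, and the appeal to Proposition 3.1(iii) for the ``in particular'' clause) are all consistent with the paper's argument, but none of them touches the point at issue. To repair the proposal you should either reproduce the paper's route---prove the splitting $(\kk\ast\g)\cong(\kk\ast\g_1)\oplus(\kk\ast\g_2)$ by a direct verification on generators and relations and then justify the passage to $\g_i^{ab}\ast\g_j^{ab}$ in the cross terms---or define the candidate isomorphism onto $(\g_1\ast\g_1)\oplus(\g_2\ast\g_2)\oplus(\g_1^{ab}\ast\g_2^{ab})\oplus(\g_2^{ab}\ast\g_1^{ab})$ directly and check every defining relation (1a)--(5d).
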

\begin{proof}
we only need to prove that\vspace{-.1cm}
\begin{equation}
\hspace{.2cm}((\g_1\oplus\g_2)\ast(\g_1\oplus
\g_2),\alpha_{\ast})\cong(\g_1\ast \g_1,\alpha_{\g_1\ast
\g_1})\oplus(\g_2\ast\g_2,\alpha_{\g_2\ast\g_2})\oplus(\g_1^{ab}\ast
\g_2^{ab},\alpha_{\g_1^{ab}\ast \g_2^{ab}})
\oplus(\g_2^{ab}\ast\g_1^{ab},\alpha_{\g_2^{ab}\ast\g_1^{ab}}),\vspace{-.2cm}
\end{equation}
\begin{equation}
\hspace{-3.cm}((\g_1\oplus\g_2)\cw(\g_1\oplus
\g_2),\alpha_{\cw})\cong(\g_1\cw\g_1,\alpha_{\g_1\cw\g_1})\oplus(\g_2\cw\g_2,
\alpha_{\g_2\cw\g_2})\oplus(\g_1^{ab}\ast\g_2^{ab},\alpha_{\g_1^{ab}\ast\g_2^{ab}}).\vspace{-.1cm}
\end{equation}
where $\alpha_\ast$ and $\alpha_\cw$ denote the endomorphisms of
$(\g_1\oplus\g_2)\ast(\g_1\oplus \g_2)$ and
$(\g_1\oplus\g_2)\cw(\g_1\oplus \g_2)$ induced by $\alpha_{\g_1}$
and $\alpha_{\g_2}$, respectively. Put
$(\g,\alpha_\g)=(\g_1\oplus\g_2,\alpha_{\oplus})$, and identify
$(\g_1,\alpha_{\g_1})$ and $(\g_2,\alpha_{\g_2})$ with their
images in $(\g,\alpha_\g)$. Then $[\g_1,\g_2]=0$. We claim that
for any ideal $(\kk,\alpha_\kk)$ of $(\g,\alpha_\g)$, there is an
isomorphism of Hom-Leibniz algebras\vspace{-.2cm}
\begin{equation}
(\kk\ast\g,\alpha_{\kk\ast\g})\cong((\kk\ast\g_1)\oplus(\kk\ast\g_2),
\alpha_{\oplus}).\vspace{-.2cm}
\end{equation}
Define the Hom-map
$\varphi:(\kk\ast\g,\alpha_{\kk\ast\g})\lo((\kk\ast\g_1)\oplus(\kk
\ast\g_2),\alpha_{\oplus})$, $(k\ast(x_1,x_2))\longmapsto((k\ast
x_1),(k\ast x_2))$ and $((x_1,x_2)\ast k)\longmapsto((x_1\ast
k),(x_2\ast k))$. A page of routine calculations shows that
$\varphi$ preserves the defining relations of the exterior product
and also,
$\varphi\circ\alpha_{\kk\ast\g}=\alpha_{\oplus}\circ\varphi$. On
the other hand, the inclusions of $\g_1$ and $\g_2$ into $\g$
yield linear Hom-maps $(\kk\ast\g_1,\alpha_{\kk\ast\g_1})\lo
(\kk\ast\g,\alpha_{\kk\ast\g})$ and
$(\kk\ast\g_2,\alpha_{\kk\ast\g_2})\lo(\kk\ast\g,\alpha_{\kk\ast\g})$,
which combine to give an inverse $\psi$ of $\varphi$. It therefore
follows that $\varphi$ is an isomorphism of Hom-Leibniz algebras,
as claimed. We now get the isomorphism $(6)$ by applying the
isomorphism $(8)$ twice.

For the isomorphism $(7)$, it is enough to note that for the
isomorphism $\varphi$ obtained in proving $(6)$, we have
$\varphi((\g_1\oplus\g_2)\square(\g_1\oplus\g_2))=(\g_1\square
\g_1)\oplus(\g_2\square\g_2)$, which deduces the result.
\end{proof}
The following corollary generalizes a result due to Ellis
\cite[Proposition 21]{E}.
\begin{corollary}
Let $(\g_1,\alpha_{\g_1})$ and $(\g_2,\alpha_{\g_2})$ be two
Hom-Lie algebras with surjective endomorphisms $\alpha_{\g_1}$ and
$\alpha_{\g_2}$. Then there is an isomorphism of Hom-vector
spaces\vspace{-.15cm}
\[J_2^{\alpha}(\g_1\oplus\g_2)\cong J_2^{\alpha}(\g_1)\oplus J_2^{\alpha}(\g_2)
\oplus(\g_1^{ab}\otimes\g_2^{ab},
\alpha_{\g_1^{ab}\otimes\g_2^{ab}})\oplus(\g_2^{ab}\otimes\g_1^{ab},
\alpha_{\g_2^{ab}\otimes\g_1^{ab}}).\vspace{-.15cm}\]
\end{corollary}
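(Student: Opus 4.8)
The plan is to deduce this entirely from Theorem 4.8 and Corollary 4.2, treating the statement as a formal concatenation of isomorphisms rather than proving anything new from scratch. The key preliminary observation I would record is that the direct sum $(\g_1\oplus\g_2,\alpha_{\g_1}\oplus\alpha_{\g_2})$ of two Hom-Lie algebras is again a Hom-Lie algebra (the bracket is skew-symmetric componentwise, with $[\g_1,\g_2]=0$), and that its companion endomorphism $\alpha_{\g_1}\oplus\alpha_{\g_2}$ is surjective precisely because both $\alpha_{\g_1}$ and $\alpha_{\g_2}$ are surjective. This is exactly the hypothesis needed to invoke the surjective case of Theorem 4.8, which replaces the $\ast$-factors by the $\otimes$-factors.

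Next I would apply Corollary 4.2 to each of the three Hom-Lie algebras $\g_1$, $\g_2$ and $\g_1\oplus\g_2$, obtaining $J_2^\alpha(\g_i)\cong HL_2^\alpha(\g_i)$ for $i=1,2$ together with $J_2^\alpha(\g_1\oplus\g_2)\cong HL_2^\alpha(\g_1\oplus\g_2)$. Combining these with the surjective version of Theorem 4.8 applied to the Hom-Leibniz algebras underlying $\g_1$ and $\g_2$ gives the chain
\[J_2^\alpha(\g_1\oplus\g_2)\cong HL_2^\alpha(\g_1\oplus\g_2)\cong HL_2^\alpha(\g_1)\oplus HL_2^\alpha(\g_2)\oplus(\g_1^{ab}\otimes\g_2^{ab},\alpha_{\g_1^{ab}\otimes\g_2^{ab}})\oplus(\g_2^{ab}\otimes\g_1^{ab},\alpha_{\g_2^{ab}\otimes\g_1^{ab}}),\]
after which a final application of Corollary 4.2 rewrites each $HL_2^\alpha(\g_i)$ as $J_2^\alpha(\g_i)$, yielding the asserted isomorphism of Hom-vector spaces.

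The only point requiring a little care is that the abelianization $\g_i^{ab}=\g_i/[\g_i,\g_i]$ appearing in the tensor factors is the same whether $\g_i$ is regarded as a Hom-Lie algebra or as the Hom-Leibniz algebra to which Theorem 4.8 is applied, since the derived subalgebra $[\g_i,\g_i]$ is unchanged under this forgetful passage; hence the tensor factors produced by Theorem 4.8 match those in the statement verbatim. I do not expect any genuine obstacle here, as the content of the corollary is fully contained in Theorem 4.8 and Corollary 4.2 and the argument is a purely formal splicing of isomorphisms; the mild bookkeeping is confined to verifying that the surjectivity of $\alpha_{\g_1}$ and $\alpha_{\g_2}$ transfers to $\g_1\oplus\g_2$ so that the $\otimes$-description (rather than the $\ast$-description) is the one that applies.
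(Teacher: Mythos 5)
Your argument is exactly the paper's proof: the authors simply cite Theorem 4.8 (in its surjective form, giving the $\otimes$-decomposition of $HL_2^{\alpha}(\g_1\oplus\g_2)$) together with Corollary 4.2 (the identification $HL_2^{\alpha}\cong J_2^{\alpha}$ for Hom-Lie algebras), applied to $\g_1$, $\g_2$ and $\g_1\oplus\g_2$ and spliced together just as you describe. Your additional remarks on the surjectivity of $\alpha_{\g_1}\oplus\alpha_{\g_2}$ and the invariance of the abelianisation are correct bookkeeping that the paper leaves implicit.
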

\begin{proof}
It follows immediately from Theorem 4.8 and Corollary 4.2.
\end{proof}
\section{Capable Hom-Leibniz algebras}
In this section we present a powerful property to classify
Hom-Leibniz algebras. This is due to the fact that the capability
property of algebraic structures is of fundamental importance in
classification of Lie and Leibniz algebras. Also, the study of
capability includes a link between the homology theory of algebras
and the tensor and the exterior centers. Investigating this
concept in Hom-Leibniz algebras can be more challenging since the
center of these algebras does not need to be an ideal. In choosing
a starting point, it seems reasonable to employ the most natural
way to define this concept.
\begin{definition}
 A Hom-Leibniz algebra $(\g,\alpha_\g)$ is
said to be {\it capable} if there exists a Hom-Leibniz algebra
$(\kk,\alpha_\kk)$ such that
$(\g,\alpha_\g)\cong(\kk/Z_\alpha(\kk),\bar\alpha_\kk)$.
\end{definition}
When $\alpha_\g=id_\g$, this definition recovers the notion of
capable Leibniz algebra in \cite{HES,KKL}.
\begin{example}
$(i)$ Let $(\g,\alpha_\g)$ be an abelian Hom-Leibniz algebra with
linear basis $\{e_i~|~i\in I\}$, where $I$ is a non-empty set.
Consider the Hom-Leibniz algebra $(\kk,\alpha_\kk)$ with linear
basis $\{e_i, e_{jk}~|~i,j,k\in I\}$, the product given by
$[e_j,e_k]=e_{jk}$ for all $j,k\in I$, and zero elsewhere, and the
endomorphism $\alpha_\kk$ is defined as
$\alpha_\kk(e_i)=\alpha_\g(e_i)$ and
$\alpha_\kk(e_{jk})=[\alpha_\kk(e_j),\alpha_\kk(e_k)]$. Since all
triple products of elements in $\kk$ are zero, one sees that
$(\kk,\alpha_\kk)$ satisfies the condition of  Hom-Leibniz
identity. Furthermore, $Z_{\alpha}(\kk)=span\{e_{jk}~|~j,k\in
I\}$, and
$(Z_{\alpha}(\kk),\alpha_{Z_{\alpha}(\kk)})\rightarrowtail(\kk,\alpha_\kk)
\stackrel{\pi}{\twoheadrightarrow}(\g,\alpha_\g)$ is an extension
of Hom-Leibniz algebras, where $\pi$ is defined by $\pi(e_i)=e_i$
and $\pi(e_{jk})=0$. Hence $(\g,\alpha_\g)$ is capable.

$(ii)$ Let $(\g,\alpha_\g)$ be a Hom-Leibniz algebra with
$\alpha_\g=0$. We claim that $(\g,\alpha_\g)$ is capable. The
result is clear when $Z_{\alpha}(\g)=0$ or $(\g,\alpha_\g)$ is
abelian. Choose a linear basis $\{e_i~|~i\in I\}$ for
$Z_{\alpha}(\g)$ and extend it to a linear basis $\{e_i,
f_j~|~i\in I,j\in J\}$ for $\g$, where $I$ and $J$ are non-empty
sets. Take the vector space $\kk$ with a linear basis $\{e_i, t_i,
f_j~|~i\in I, j\in J\}$ (in fact, $\kk$ is a vector space direct
sum of $\g$ and the space generated by the set $\{t_i~|~i\in
I\}$), together with the following product: $[e_i,f_{j_0}]=t_i$
for some fixed $j_0\in J$ and all $i\in I$, $[f_j,f_k]$ is the
same in $\g$ for all $j,k\in J$, and zero elsewhere. Then $\kk$
with companion endomorphism $\alpha_\kk=0$ is a Hom-Leibniz
algebra such that $Z_\alpha(\kk)=span\{t_i~|~i\in I\}$ and
$(\kk/Z_{\alpha}(\kk),\bar\alpha_\kk)\cong(\g,\alpha_\g)$, as
claimed.

$(iii)$ Consider the three-dimensional Hom-Leibniz algebra
$(\g,\alpha_\g)$ with linear basis $\{e_1,e_2,e_3\}$, the product
given by $[e_1,e_2]=e_3$ (unwritten products are equal to zero),
and the non-surjective endomorphism $\alpha_\g$ defined by
$\alpha_\g(e_1)=e_3$, $\alpha_\g(e_2)=e_2$, $\alpha_\g(e_3)=0$.
Evidently, $Z_\alpha(\g)=\g^2=span\{e_3\}$. Let $(\kk,\alpha_\kk)$
be the four-dimensional Hom-Leibniz algebra with linear basis
$\{f_1,f_2,f_3,f_4\}$, the product given by $[f_1,f_2]=f_3$,
$[f_3,f_1]=f_4$, and the endomorphism defined by
$\alpha_\kk(f_1)=f_3$, $\alpha_\kk(f_2)=f_2$,
$\alpha_\kk(f_3)=\alpha_\kk(f_4)=0$. Then
$Z_\alpha(\kk)=span\{f_4\}$ and
$(\kk/Z_{\alpha}(\kk),\bar\alpha_\kk)\cong(\g,\alpha_\g)$, that
is, $(\g,\alpha_\g)$ is capable.  (Note that we give the general
case of this example in Theorem \ref{nonperfect} below).
\end{example}
The following concepts are useful in the study of capability of
Hom-Leibniz algebras.
\begin{definition}
 Let $(\g,\alpha_\g)$ be a Hom-Leibniz
algebra.

$(a)$ The {\it tensor center} $Z^{\ast}_{\alpha}(\g)$ of
$(\g,\alpha_\g)$ is defined to be the vector space \vspace{-.2cm}
\[Z^{\ast}_{\alpha}(\g)=\{x\in L~|~\alpha^k_\g(x)\ast y=y\ast
\alpha^k_\g(x)=0_{\g\ast\g},~{\rm for~all}~y\in\g,
k\geq0\}.\vspace{-.2cm}\]

$(b)$ The {\it exterior center} $Z^{\cw}_{\alpha}(\g)$ of
$(L,\alpha_L)$ is defined to be\vspace{-.2cm}
\[Z^{\cw}_{\alpha}(\g)=\{x\in\g~|~\alpha^k_\g(x)\cw y=y\cw
\alpha^k_\g(x)=0_{\g\cw\g},~{\rm for~all}~y\in\g,
k\geq0\}.\vspace{-.2cm}\]
\end{definition}
Plainly, $Z^{\ast}_{\alpha}(\g)\subseteq Z^{\cw}_{\alpha}(\g)$,
and the equality holds whenever $(\g,\alpha_\g)$ is perfect, by
Proposition 3.5.

The following proposition provides some useful information about
the tensor and exterior centers.
\begin{proposition}
Let $(\g,\alpha_\g)$ be a Hom-Leibniz algebra. Then:

$(i)$ Both
$(Z^{\ast}_{\alpha}(\g),\alpha_{Z^{\ast}_{\alpha}(\g)})$ and
$(Z^{\cw}_{\alpha}(\g),\alpha_{Z^{\cw}_{\alpha}(\g)})$ are central
ideals of $(\g,\alpha_\g)$.

$(ii)$ If $\alpha_\g$ is a surjective endomorphism, then
$Z^{\ast}_{\alpha}(\g)$ and $Z^{\cw}_{\alpha}(\g)$ is contained in
$[\g,\g]$.

$(iii)$ If
$(\r,\alpha_\r)\rightarrowtail(\ff,\alpha_\ff)\stackrel{\pi}
{\twoheadrightarrow}(\g,\alpha_\g)$ is a free presentation of
$(\g,\alpha_\g)$, then
$Z^{\cw}_{\alpha}(\g)=\bar\pi(Z_{\alpha}(\ff/[\r,\ff]))$, where
$\bar{\pi}:(\ff/[\r,\ff],\bar{\alpha}_\ff)\lo(\g,\alpha_\g)$ is
the homomorphism induced by $\pi$.

$(iv)$ $(Z^{\cw}_{\alpha}(\g),\alpha_{Z^{\cw}_{\alpha}(\g)})$ is
the smallest central subalgebra containing all central subalgebras
$(\n,\alpha_\n)$ for which the canonical homomorphism
$HL_2^{\alpha}(\g)\lo HL_2^{\alpha}(\g/\n)$ is a monomorphism
$($or equivalently, for which the canonical homomorphism
$(\g\cw\g,\alpha_{\g\cw\g})\lo(\g/\n\cw\g/\n,\alpha_{\g/\n\cw\g/\n})$
is an isomorphism$)$.

$(v)$ $(Z^{\ast}_{\alpha}(\g),\alpha_{Z^{\ast}_{\alpha}(\g)})$ is
the smallest central subalgebra containing all central subalgebras
$(\n,\alpha_\n)$ for which the canonical homomorphism
$JL_2^{\alpha}(\g)\lo JL_2^{\alpha}(\g/\n)$ is a monomorphism
$($or equivalently, for which the canonical homomorphism
$(\g\ast\g,\alpha_{\g\ast\g})\lo(\g/\n\ast\g/\n,\alpha_{\g/\n\ast\g/\n})$
is an isomorphism$)$.
\end{proposition}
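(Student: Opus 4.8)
The plan is to derive all five parts from the commutator Hom-maps and the exact sequences already available. For $(i)$ I would first apply $\lambda_\g\colon\g\ast\g\to\g$ and $\lambda_\g\colon\g\cw\g\to\g$: since $\alpha_\g^k(x)\ast y=0$ forces $[\alpha_\g^k(x),y]=\lambda_\g(\alpha_\g^k(x)\ast y)=0$, both $Z^\ast_\alpha(\g)$ and $Z^\cw_\alpha(\g)$ lie in $Z_\alpha(\g)$ and are therefore central, while $\alpha_\g$-invariance is immediate from the quantifier ``for all $k\geq0$'' in the definitions. For the ideal property I would check closure under the bracket with $\g$ on generators, using the action of $(\g,\alpha_\g)$ on $\g\ast\g$ (resp.\ $\g\cw\g$) from Proposition 3.1$(ii)$ and the relations $(4a)$--$(4d)$; the left brackets vanish because the centers sit inside $Z_\alpha(\g)$, and $\alpha_\g$-invariance is what absorbs the $\alpha$-twists on the right-hand sides. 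For $(ii)$, assuming $\alpha_\g$ surjective, I would pass to the abelianisation: the quotient $\g\to\g^{ab}$ induces $\g\ast\g\to\g^{ab}\ast\g^{ab}$ and $\g\cw\g\to\g^{ab}\cw\g^{ab}$, and since $\alpha_{\g^{ab}}$ is again surjective and $\g^{ab}$ is abelian, Proposition 3.1$(iii)$ gives $\g^{ab}\ast\g^{ab}\cong(\g^{ab}\otimes\g^{ab})\oplus(\g^{ab}\otimes\g^{ab})$, and quotienting by the anti-diagonal $\g^{ab}\square\g^{ab}$ yields $\g^{ab}\cw\g^{ab}\cong\g^{ab}\otimes\g^{ab}$ with $\bar x\cw\bar y\mapsto\bar x\otimes\bar y$. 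If some $x\in Z^\cw_\alpha(\g)$ had $\bar x\neq0$, then $x\cw x=0$ would map to $\bar x\otimes\bar x\neq0$, a contradiction; hence $x\in[\g,\g]$, and as $Z^\ast_\alpha(\g)\subseteq Z^\cw_\alpha(\g)$ both centers lie in $[\g,\g]$.

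For $(iii)$ I would use the central extension $\r/[\r,\ff]\rightarrowtail\ff/[\r,\ff]\stackrel{\bar\pi}{\twoheadrightarrow}\g$ together with the identification $\g\cw\g\cong[\ff,\ff]/[\r,\ff]$ of Corollary 4.4, under which $\bar x\cw\bar y$ corresponds to $[\tilde x,\tilde y]+[\r,\ff]$ for any lifts $\tilde x,\tilde y\in\ff$. Centrality of the extension makes this bracket depend only on the classes $\hat x,\hat y\in\ff/[\r,\ff]$, so the condition $\alpha_\g^k(x)\cw y=0$ for all $y$ and $k$ translates precisely into $[\bar\alpha_\ff^{\,k}(\hat x),\hat y]=0$ in $\ff/[\r,\ff]$ for all $\hat y$ and $k$, i.e.\ $\hat x\in Z_\alpha(\ff/[\r,\ff])$; applying $\bar\pi$ then gives the claimed equality. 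The care needed is the faithfulness of the correspondence $\bar x\cw\bar y\leftrightarrow[\tilde x,\tilde y]$ in both bracket orders.

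Parts $(iv)$ and $(v)$ are parallel, and I would run them through the right-exact sequences attached to $\n\rightarrowtail\g\twoheadrightarrow\g/\n$: the exterior sequence $(2)$, $\;\n\cw\g\stackrel{\rho}{\lo}\g\cw\g\twoheadrightarrow\g/\n\cw\g/\n$, and its tensor analogue from Proposition 3.1$(iv)$. The core equivalence is that, for a central $\alpha_\g$-invariant $\n$, the map $\g\cw\g\to\g/\n\cw\g/\n$ is an isomorphism iff $\Im(\rho)=0$ iff $n\cw y=y\cw n=0$ for all $n\in\n$, $y\in\g$, which by $\alpha_\g$-invariance of $\n$ is exactly $\n\subseteq Z^\cw_\alpha(\g)$. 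Hence $Z^\cw_\alpha(\g)$ contains every such $\n$ and, taken as $\n$ itself, satisfies the condition, so it is the smallest central subalgebra containing all of them; part $(v)$ is identical after replacing $\cw,\lambda_\g,HL_2^\alpha$ by $\ast,\lambda_\g,JL_2^\alpha$. To reconcile this with the homological phrasing I would invoke Theorem 4.1 ($HL_2^\alpha\cong\ker\lambda_\g$): by naturality an isomorphism $\g\cw\g\cong\g/\n\cw\g/\n$ restricts to an isomorphism of kernels, so $HL_2^\alpha(\g)\to HL_2^\alpha(\g/\n)$ is a monomorphism, and for the converse I would extract from the eight-term sequence $(4)$ that $\ker\big(HL_2^\alpha(\g)\to HL_2^\alpha(\g/\n)\big)$ is the image of $\ker(\n\cw\g\to\g)$.

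The main obstacle, recurring in $(i)$, $(iii)$ and the ``or equivalently'' clauses of $(iv)$--$(v)$, is the one-sidedness of the center: the clean exact-sequence arguments control only one bracket order, whereas $Z^\cw_\alpha(\g)$ and $Z^\ast_\alpha(\g)$ demand the vanishing of both $\alpha_\g^k(x)\cw y$ and $y\cw\alpha_\g^k(x)$. Reconciling these---showing that for central elements the reversed products (equivalently the generators $y\cw n$) are also killed---requires squeezing the needed symmetry out of the defining relations $(3a)$--$(4d)$ rather than from centrality alone, and this is the step I expect to be most delicate. A secondary difficulty is the mono-$\Rightarrow$-iso direction of $(iv)$/$(v)$: sequence $(2)$ only identifies the full kernel of $\g\cw\g\to\g/\n\cw\g/\n$, while monomorphicity concerns its restriction to $\ker\lambda_\g$, so closing this gap is precisely what the finer sequence $(4)$ is needed for.
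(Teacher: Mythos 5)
Your proposal is correct and follows essentially the same route as the paper: centrality of both centers via the commutator Hom-maps for $(i)$, passage to the abelianisation together with Proposition 3.1$(iii)$ for $(ii)$, the identification $\g\cw\g\cong[\ff,\ff]/[\r,\ff]$ from Corollary 4.4 for $(iii)$, and the exact sequence $(2)$ combined with Theorem 4.1 for $(iv)$--$(v)$ (your worry that the mono-implies-iso direction needs the eight-term sequence is unfounded, since centrality of $\n$ already forces $\Im(\n\cw\g\to\g\cw\g)\subseteq\ker\lambda_\g$, which is all the paper's diagram uses). The only deviation is in $(ii)$, where you realize $\g^{ab}\cw\g^{ab}\cong\g^{ab}\otimes\g^{ab}$ directly by killing the anti-diagonal inside the decomposition of Proposition 3.1$(iii)$, whereas the paper routes through the Hom-Lie tensor product via Theorem 3.6 and \cite[Proposition 2.1]{CKR}; both give the same map $x\cw y\mapsto\bar x\otimes\bar y$ and the same conclusion.
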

\begin{proof}
$(i)$ Applying the first six identities in the definition of the
tensor product, it follows that $Z^{\ast}_{\alpha}(\g)$ is a
vector subspace of $\g$. Considering the commutator Hom-map
$\lambda_\g:(\g\ast\g,\alpha_{\g\ast\g})\lo(\g,\alpha_\g)$, we
have\vspace{-.2cm}
\[[\alpha^k_\g(x),y]=\lambda_\g(\alpha^k_\g(x)\ast y)=0~~~~{\rm and}~~~~
[y,\alpha^k_\g(x)]=\lambda_\g(y\ast\alpha^k_\g(x))=0,\vspace{-.15cm}\]
for all $x\in Z^{\ast}_{\alpha}(\g)$, $y\in\g$ and $k\geq0$. Then
$Z^{\ast}_{\alpha}(\g)\subseteq Z(\g)$, implying that
$(Z^{\ast}_{\alpha}(\g),\alpha_{Z^{\ast}_{\alpha}(\g)})$ is an
ideal. The same arguments are used for the exterior center.

$(ii)$ Consider the composite homomorphism\vspace{-.2cm}
\[(\g\ast\g,\alpha_{\g\ast\g})\stackrel{\rho}{\lo}
(\g^{ab}\ast\g^{ab},\alpha_{\g^{ab}\ast\g^{ab}})\stackrel{\phi}{\lo}((\g^{ab}\otimes
\g^{ab})\oplus(\g^{ab}\otimes\g^{ab}),\alpha_{\oplus}),\vspace{-.2cm}\]
where $\rho$ is the natural surjective homomorphism and $\phi$ is
the isomorphism given in Proposition 3.1$(iii)$. If there is some
$x\in\g-[\g,\g]$, then $\bar x\otimes\bar y\neq 0$ for some
$y\in\g$, implying that the pre-image of this element under $\phi
\circ\rho$ can not be vanished, that is, $x\ast y\neq 0$ in
$\g\ast\g$. This means that $Z^\ast_\alpha(\g)\subseteq[\g,\g]$.\\
For second part, we consider the composite
homomorphism\vspace{-.2cm}
\[(\g\cw \g,\alpha_{\g\cw \g})\stackrel{\rho'}{\lo}
(\g^{ab}\cw\g^{ab},\alpha_{\g^{ab}\cw\g^{ab}})\stackrel{\bar\zeta}{\lo}
(\g^{ab}\star\g^{ab},\alpha_{\g^{ab}\star\g^{ab}})\stackrel{\phi'}{\lo}
(\g^{ab}\otimes\g^{ab},\alpha_{\g^{ab}\otimes\g^{ab}}),\vspace{-.2cm}\]
where $\rho'$ is the natural surjective homomorphism, $\bar\zeta$
and $\phi'$ are the isomorphisms given in Theorem 3.6 and
\cite[Proposition 2.1]{CKR}, respectively. The reminder of proof
is similar to above.

$(iii)$ Assume $\bar x\in Z_{\alpha}(\ff/[\r,\ff])$, that is,
$[\bar\alpha^k_\ff(\bar x),\bar f]=[\bar f,\bar\alpha^k_{\ff}(\bar
x)]=0$ for all $\bar f\in \ff/[\r,\ff]$ and $k\geq0$. It follows
from the isomorphism given in Corollary 4.4 that\vspace{-.2cm}
\[\alpha^k_\g(\bar\pi(\bar x))\cw\bar\pi(\bar f)
=\bar\pi(\bar\alpha^k_\ff(\bar x))\cw\bar\pi(\bar f)=0~~~{\rm
and}~~~\bar\pi(\bar f)\cw\alpha^k_\g(\bar\pi(\bar x))=\bar\pi(\bar
f)\cw\bar\pi(\bar\alpha^k_\ff(\bar x))=0.\vspace{-.2cm}\]Hence,
the subjectivity of $\bar\pi$ yields that $\bar\pi(\bar x)\in
Z^{\cw}_{\alpha}(\g)$. The reverse containment is proved
similarly.

$(iv)$ The result directly follows from the following commutative
diagram of Hom-Leibniz algebras\vspace{.1cm}

\tikzset{node distance=3cm, auto}$~~~~$
\begin{tikzpicture}[%
back line/.style={densely dotted}, cross
line/.style={preaction={draw=white, -,line width=6pt}}]
\hspace{1cm}\node(A1){\fontsize{9.5}{5}\selectfont$~$};
\node[right
of=A1](B1){\fontsize{9.5}{5}\selectfont$\hspace{.cm}(\n\cw\g,\alpha_{\n\cw\g})$};
\node[right
of=B1](C1){\fontsize{9.5}{5}\selectfont$HL_2^{\alpha}(\g)$};
\node[right
of=C1](D1){\fontsize{9.5}{5}\selectfont$HL_2^{\alpha}(\ds\f{\g}{\n})$};

   \node(B2)[below of=B1, node distance=1.5cm]{\fontsize{9.5}{5}\selectfont $\hspace{.cm}(\n\cw\g,\alpha_{\n\cw\g})$};
   \node(C2)[below of=C1, node distance=1.5cm]{\fontsize{9.5}{5}\selectfont $(\g\cw\g,\alpha_{\g\cw\g})$};
             \node(D2)[below of=D1, node distance=1.5cm]
            {\fontsize{9.5}{5}\selectfont$(\ds\f{\g}{\n}\cw\f{\g}{\n},\alpha_{\f{\g}{\n}\cw\f{\g}{\n}})$};

   \node(B3)[below of=B2, node distance=1.5cm]{\fontsize{9.5}{5}\selectfont $~$};
   \node(C3)[below of=C2, node distance=1.5cm]{\fontsize{9.5}{5}\selectfont
            $([\g,\g],\alpha_{[\g,\g]})$};
   \node(D3)[below of=D2, node distance=1.5cm]
     {\fontsize{9.5}{5}\selectfont$(\ds[\f{\g}{\n},\f{\g}{\n}],\alpha_{[\f{\g}{\n},\f{\g}{\n}]})$,};

   \draw[->](B1) to node{\fontsize{7.5}{5}\selectfont$~$}(C1);
   \draw[->](C1) to node{\fontsize{7.5}{5}\selectfont$~$}(D1);

   \draw[->](B2) to node{\fontsize{7.5}{5}\selectfont$~$}(C2);
   \draw[->>](C2) to node{\fontsize{7.5}{5}\selectfont$~$}(D2);

  \draw[->>](C3) to node{\fontsize{7.5}{5}\selectfont$~$}(D3);

  \draw[>->>](B1) to node[right]{\fontsize{7.5}{5}\selectfont$=$}(B2);
  \draw[>->](C1) to node[right]{\fontsize{7.5}{5}\selectfont$~$}(C2);
  \draw[>->](D1)to node[right]{\fontsize{7.5}{5}\selectfont$~$}(D2);

\draw[->>](C2) to
node[right]{\fontsize{7.5}{5}\selectfont$~$}(C3); \draw[->>](D2)to
node[right]{\fontsize{7.5}{5}\selectfont$~$}(D3);
\end{tikzpicture}\\
where, by the sequence $(2)$ and Theorem 4.1, the rows and columns
are exact.

The proof of $(v)$ is similar to that of $(iv)$.
\end{proof}
We have the following interesting consequence.
\begin{corollary}
Let $(\n,\alpha_ n)$ be a central subalgebra of a finite
dimensional Hom-Leibniz algebra $(\g,\alpha_\g)$. Then
$\n\subseteq Z^{\cw}_{\alpha}(\g)$ if and only if
$\dim(HL_2^{\alpha}(\g/\n))=\dim(HL_2^{\alpha}(\g))+\dim(\n\cap[\g,\g])$
$($as vector spaces$)$.
\end{corollary}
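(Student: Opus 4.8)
The plan is to extract the relevant short piece of the eight-term exact sequence of Theorem 4.7 for the extension determined by $\n$, convert it into a dimension identity by rank--nullity, and then invoke the characterization of the exterior center in Proposition 5.4$(iv)$.

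First I would apply Theorem 4.7 to the central extension $(\n,\alpha_\n)\rightarrowtail(\g,\alpha_\g)\twoheadrightarrow(\g/\n,\bar\alpha_\g)$. Since $(\n,\alpha_\n)$ is central we have $[\n,\g]=0$, so the term $\n/[\n,\g]$ of the sequence $(4)$ collapses to $(\n,\alpha_\n)$ and its tail becomes the exact sequence of (finite dimensional) Hom-vector spaces
\begin{equation*}
HL_2^\alpha(\g)\stackrel{f}\lo HL_2^\alpha(\g/\n)\stackrel{g}\lo(\n,\alpha_\n)\stackrel{h}\lo HL_1^\alpha(\g)\lo HL_1^\alpha(\g/\n)\lo 0,
\end{equation*}
in which $h$ is induced by the composite of the inclusion $\n\hookrightarrow\g$ with the projection onto $HL_1^\alpha(\g)\cong(\g^{ab},\bar\alpha_{\g^{ab}})$. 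Hence $\ker h=\n\cap[\g,\g]$, and exactness at $(\n,\alpha_\n)$ gives $\Im g=\ker h=\n\cap[\g,\g]$, while exactness at $HL_2^\alpha(\g/\n)$ gives $\ker g=\Im f$.

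Next I would count dimensions, which is legitimate because $(\g,\alpha_\g)$ is finite dimensional and therefore so are all of the Hom-vector spaces above. Rank--nullity applied to $g$ gives
\begin{equation*}
\dim HL_2^\alpha(\g/\n)=\dim\ker g+\dim\Im g=\dim\Im f+\dim(\n\cap[\g,\g]),
\end{equation*}
and since $\dim\Im f=\dim HL_2^\alpha(\g)-\dim\ker f$, this shows that the equality $\dim HL_2^\alpha(\g/\n)=\dim HL_2^\alpha(\g)+\dim(\n\cap[\g,\g])$ holds if and only if $\ker f=0$, i.e. if and only if $f$ is a monomorphism. Finally, Proposition 5.4$(iv)$ says precisely that the canonical map $f\colon HL_2^\alpha(\g)\lo HL_2^\alpha(\g/\n)$ is a monomorphism if and only if $\n\subseteq Z^{\cw}_\alpha(\g)$; combining the two equivalences yields the corollary.

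The bulk of the argument is then routine bookkeeping of dimensions in an exact sequence, so I expect the only genuinely delicate point to be the first step: one must correctly specialise the sequence $(4)$ to the central extension at hand, in particular checking that centrality of $\n$ collapses $\n/[\n,\g]$ to $\n$ and that the connecting map $g$ has image exactly $\n\cap[\g,\g]$ (equivalently, that $h$ is the abelianisation of the inclusion $\n\hookrightarrow\g$). Once this identification is in place, the remainder follows from linear algebra and Proposition 5.4$(iv)$.
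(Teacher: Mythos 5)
Your proof is correct and follows essentially the same route as the paper: both specialise the exact sequence of Theorem 4.7 to the central extension $(\n,\alpha_\n)\rightarrowtail(\g,\alpha_\g)\twoheadrightarrow(\g/\n,\bar\alpha_\g)$ and then conclude via Proposition 5.4$(iv)$. Your write-up is in fact more careful than the paper's (which leaves the rank--nullity bookkeeping implicit and displays the truncated sequence with some apparent typos in the order of the $HL_2$ terms and in the last term, which should read $\n\cap[\g,\g]$), but the underlying argument is identical.
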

\begin{proof}
The centrality of $(\n,\alpha_\n)$ together with Theorem 4.7
implies an exact sequence\vspace{-.2cm}
\[(\n\cw\kk,\alpha_{\n\cw\kk})\lo HL_2^{\alpha}(\f{\g}{\n})\lo
HL_2^{\alpha}(\g)\twoheadrightarrow(\n\cap\g,\alpha_{\n\cap
\g}).\vspace{-.2cm}\]The result now follows from Proposition
5.4$(iv)$.
\end{proof}
If $\alpha_\g=id_\g$, Corollary 5.5 reduce to \cite[Corollary
2.1]{HES}.

The following theorem provides a criterion for determining the
capability of Hom-Leibniz algebra, which is similar to work of
Casas and Martinez \cite[Corollary 4.10]{CG} for the Hom-Lie
algebra case.
\begin{theorem}
A Hom-Leibniz algebra $(\g,\alpha_\g)$ is capable if and only if
$Z^{\cw}_{\alpha}(\g)=0$.
\end{theorem}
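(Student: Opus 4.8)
The plan is to prove the two implications separately, using Lemma 3.4 for the forward direction and Proposition 5.4$(iii)$ for the converse.

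For the implication ``capable $\Rightarrow Z^{\cw}_{\alpha}(\g)=0$'', suppose $(\g,\alpha_\g)\cong(\kk/Z_{\alpha}(\kk),\bar\alpha_\kk)$ and let $\phi:(\kk,\alpha_\kk)\twoheadrightarrow(\g,\alpha_\g)$ be the associated central extension with $\ker\phi=Z_{\alpha}(\kk)$. By Lemma 3.4 there is a homomorphism $\psi:(\g\cw\g,\alpha_{\g\cw\g})\lo(\kk,\alpha_\kk)$ which on generators is $\psi(x\cw x')=[k,k']$ for arbitrary $\phi$-preimages $k,k'$ of $x,x'$, this value being well defined precisely because the extension is central. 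I would then fix $x\in Z^{\cw}_{\alpha}(\g)$, choose a preimage $k\in\phi^{-1}(x)$, and note that $\alpha_\kk^{n}(k)$ is a preimage of $\alpha_\g^{n}(x)$ since $\phi$ commutes with the structure maps. Hence for every $n\geq0$ and every $k'\in\kk$, setting $y=\phi(k')$,
\[
[\alpha_\kk^{n}(k),k']=\psi(\alpha_\g^{n}(x)\cw y)=0,
\]
because $\alpha_\g^{n}(x)\cw y=0$ by the definition of the exterior center. This shows $k\in Z_{\alpha}(\kk)=\ker\phi$, so $x=\phi(k)=0$ and therefore $Z^{\cw}_{\alpha}(\g)=0$.

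For the converse I would fix a free presentation $(\r,\alpha_\r)\rightarrowtail(\ff,\alpha_\ff)\stackrel{\pi}\twoheadrightarrow(\g,\alpha_\g)$ and set $(\kk,\alpha_\kk)=(\ff/[\r,\ff],\bar\alpha_\ff)$, with $\bar\pi:(\kk,\alpha_\kk)\twoheadrightarrow(\g,\alpha_\g)$ the induced epimorphism of kernel $\r/[\r,\ff]$. A direct check gives $\r/[\r,\ff]\subseteq Z_{\alpha}(\kk)$: for $r\in\r$ one has $\alpha_\ff^{n}(r)\in\r$, so $[\alpha_\ff^{n}(r),f]\in[\r,\ff]$ for all $f\in\ff$ and $n\geq0$, whence the class of $\alpha_\ff^{n}(r)$ is central in $\kk$. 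On the other hand, Proposition 5.4$(iii)$ yields $Z^{\cw}_{\alpha}(\g)=\bar\pi(Z_{\alpha}(\kk))$, so the hypothesis $Z^{\cw}_{\alpha}(\g)=0$ forces $Z_{\alpha}(\kk)\subseteq\ker\bar\pi=\r/[\r,\ff]$. The two inclusions give $Z_{\alpha}(\kk)=\r/[\r,\ff]$, and consequently
\[
(\kk/Z_{\alpha}(\kk),\bar\alpha_\kk)\cong(\ff/\r,\bar\alpha_\ff)\cong(\g,\alpha_\g),
\]
so $(\g,\alpha_\g)$ is capable.

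I expect the only genuinely delicate point to be in the forward direction: verifying that $[\alpha_\kk^{n}(k),k']$ really equals $\psi(\alpha_\g^{n}(x)\cw y)$ independently of the chosen preimages. This is exactly where the centrality of the extension and the well-definedness guaranteed by Lemma 3.4 are used, and where one should also record the symmetric identity $[k',\alpha_\kk^{n}(k)]=\psi(y\cw\alpha_\g^{n}(x))=0$. The converse is then essentially bookkeeping around Proposition 5.4$(iii)$ once the central inclusion $\r/[\r,\ff]\subseteq Z_{\alpha}(\kk)$ is in place.
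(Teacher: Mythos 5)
Your proof is correct, and the backward implication is exactly the paper's (the inclusion $\r/[\r,\ff]\subseteq Z_{\alpha}(\ff/[\r,\ff])$ plus Proposition 5.4$(iii)$, which is what the authors dismiss as ``trivial''). The forward implication, however, goes by a genuinely different route. The paper keeps everything inside the free presentation: given a central extension $\rho:(\kk_1,\alpha_{\kk_1})\twoheadrightarrow(\g,\alpha_\g)$ with kernel $Z_\alpha(\kk_1)$ witnessing capability, it uses freeness to produce $\beta:(\ff/[\r,\ff],\bar\alpha_\ff)\lo(\kk_1,\alpha_{\kk_1})$ with $\rho\circ\beta=\bar\pi$, deduces $\kk_1=Z_\alpha(\kk_1)+\Im(\beta)$, and concludes $\bar\pi(Z_\alpha(\ff/[\r,\ff]))=\rho(Z_\alpha(\kk_1))=0$, again finishing via Proposition 5.4$(iii)$. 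You instead map \emph{out of} $\g\cw\g$ rather than \emph{into} $\kk_1$: Lemma 3.4 gives $\psi:(\g\cw\g,\alpha_{\g\cw\g})\lo(\kk,\alpha_\kk)$ with $\psi(x\cw x')=[k,k']$ for arbitrary preimages, and the identity $[\alpha_\kk^{n}(k),k']=\psi(\alpha_\g^{n}(x)\cw y)=0$ shows directly that any $x\in Z^{\cw}_{\alpha}(\g)$ lifts into $Z_\alpha(\kk)=\ker\phi$, hence vanishes. (Note that membership in $Z_\alpha(\kk)$ only requires the left brackets $[\alpha_\kk^{n}(k),-]$ to vanish, so the symmetric identity you record is not even needed.) Your version of this direction is more self-contained -- it bypasses the free presentation and Proposition 5.4$(iii)$ entirely and works with an arbitrary capability witness -- while the paper's version has the aesthetic advantage of reducing both implications to the single criterion $Z_\alpha(\ff/[\r,\ff])=\r/[\r,\ff]$. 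Both are sound.
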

\begin{proof}
Consider the central free presentation\vspace{-.12cm}
\begin{equation}
e_1:~(\frac{\r}{[\r,\ff]},\bar\alpha_\r)\rightarrowtail(\frac{\ff}{[\r,\ff]},\bar\alpha_\ff)
\stackrel{\bar\pi}\twoheadrightarrow(\g,\alpha_\g)\vspace{-.12cm}
\end{equation}
of $(\g,\alpha_\g)$ induced by its free presentation. By virtue of
Proposition 5.4$(iii)$, it suffices to show that\vspace{-.18cm}
\[(\g,\alpha_\g) {\rm~is~capable~if~and~only~if~}
Z_\alpha(\ff/[\r,\ff])=\r/[\r,\ff].\vspace{-.18cm}\]Assume first
that $(\g,\alpha_\g)$ is capable. Then one finds a central
extension\vspace{-.15cm}
\[(Z_\alpha(\kk_1),\alpha_{Z_\alpha(\kk_1)})\rightarrowtail(\kk_1,\alpha_{\kk_1})
\stackrel{\rho}\twoheadrightarrow(\g,\alpha_\g).\vspace{-.15cm}\]
By the freeness of $(\ff,\alpha_\ff)$, there is a homomorphism
$\beta:(\ff/[\r,\ff],\bar\alpha_\ff)\lo(\kk_1,\alpha_{\kk_1})$
such that $\rho\circ\beta=\bar\pi$. It is routine to see that
$\kk=\m+\Im(\beta)$ and then
$\bar\pi(Z_\alpha(\ff/[\r,\ff]))=\rho(Z_\alpha(\kk))=0$, forcing
$Z(\ff/[\r,\ff])=\r/[\r,\ff]$. The proof of the converse is
trivial.
\end{proof}
We are able to obtain the following important result, which is a
vast generalization of a result stated in Khmaladze et al.
\cite{KKL}.
\begin{corollary}
Let $(\g,\alpha_\g)$ be a perfect Hom-Leibniz algebra with
surjective endomorphism $\alpha_\g$. Then $(\g,\alpha_\g)$ is
capable if and only if $Z(\g)=0$.
\end{corollary}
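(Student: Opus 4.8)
The plan is to deduce the statement from the capability criterion of Theorem 5.6, namely that $(\g,\alpha_\g)$ is capable if and only if $Z^{\cw}_{\alpha}(\g)=0$. In view of this, it suffices to prove that, under the present hypotheses, the exterior centre coincides with the ordinary centre, that is, $Z^{\cw}_{\alpha}(\g)=Z(\g)$; the corollary then follows at once. I first record that surjectivity of $\alpha_\g$ gives $Z_{\alpha}(\g)=Z(\g)$, as already observed in Subsection 2.1.

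One inclusion is immediate and does not use perfectness. By Proposition 5.4$(i)$, $(Z^{\cw}_{\alpha}(\g),\alpha_{Z^{\cw}_{\alpha}(\g)})$ is a central ideal of $(\g,\alpha_\g)$, so $Z^{\cw}_{\alpha}(\g)\subseteq Z_\alpha(\g)=Z(\g)$. In particular, if $Z(\g)=0$ then $Z^{\cw}_{\alpha}(\g)=0$, and Theorem 5.6 yields that $(\g,\alpha_\g)$ is capable; this settles the ``if'' direction without even invoking perfectness.

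For the converse I must establish the reverse inclusion $Z(\g)\subseteq Z^{\cw}_{\alpha}(\g)$, where perfectness enters decisively. Since $(\g,\alpha_\g)$ is perfect, Proposition 3.5 gives $Z^{\cw}_{\alpha}(\g)=Z^{\ast}_{\alpha}(\g)$ and identifies $\lambda_\g\colon(\g\cw\g,\alpha_{\g\cw\g})\to(\g,\alpha_\g)$ as a universal central extension, with $\g\cw\g\cong\g\ast\g$. Writing $\bar\g=\g/Z(\g)$, which is again perfect, I would use the characterisation of Proposition 5.4$(iv)$ and reduce the inclusion to showing that the natural map $(\g\cw\g,\alpha_{\g\cw\g})\to(\bar\g\cw\bar\g,\alpha_{\bar\g\cw\bar\g})$ is an isomorphism. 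Surjectivity is supplied by the exact sequence $(2)$ applied to $Z(\g)\rightarrowtail\g\twoheadrightarrow\bar\g$. For injectivity the idea is that both $\g\cw\g\to\g$ and $\bar\g\cw\bar\g\to\bar\g$ are universal central extensions of perfect algebras, so that the composite $\g\cw\g\xrightarrow{\lambda_\g}\g\twoheadrightarrow\bar\g$, having a perfect and superperfect total space, is forced by the universal property to coincide over $\bar\g$ with $\bar\g\cw\bar\g\to\bar\g$; this identifies the two exterior squares and places $Z(\g)$ inside $Z^{\cw}_{\alpha}(\g)$.

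The main obstacle I anticipate is verifying that this composite is genuinely a central extension of $\bar\g$, i.e. that $\lambda_\g^{-1}(Z(\g))$ is central in $\g\cw\g$, together with the superperfectness of $\g\cw\g$ needed for the universal property. A direct computation using $[\lambda_\g(u),\lambda_\g(v)]=0$ only shows $[\lambda_\g^{-1}(Z(\g)),\g\cw\g]\subseteq\ker\lambda_\g$, so one genuinely needs extra input. This is exactly where surjectivity of $\alpha_\g$ is used: it makes $\alpha_{\g\cw\g}$ surjective (on generators $a\cw b=\alpha_{\g\cw\g}(a'\cw b')$), which lets me rewrite every bracket in $\g\cw\g$ as an action of an element of $\g$ through the crossed-module relations of Proposition 3.1$(ii)$, and, together with $Z_{\alpha}(\g)=Z(\g)$, is what one must exploit to force the relevant actions of central elements to vanish. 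Once $Z(\g)\subseteq Z^{\cw}_{\alpha}(\g)$ is in place, capability gives $Z^{\cw}_{\alpha}(\g)=0$ by Theorem 5.6, hence $Z(\g)=0$, completing the ``only if'' direction.
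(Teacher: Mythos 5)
Your reduction of the statement to the equality $Z^{\cw}_{\alpha}(\g)=Z(\g)$ via Theorem 5.6, and the easy inclusion $Z^{\cw}_{\alpha}(\g)\subseteq Z_{\alpha}(\g)=Z(\g)$ obtained from Proposition 5.4$(i)$ together with the surjectivity of $\alpha_\g$, match the paper and settle the ``if'' direction. The problem is the hard inclusion $Z(\g)\subseteq Z^{\cw}_{\alpha}(\g)$. You correctly reduce it, through Proposition 5.4$(iv)$, to the injectivity of the natural map $HL_2^{\alpha}(\g)\to HL_2^{\alpha}(\g/Z(\g))$, but your proposed proof of that injectivity --- identifying the composite $\g\cw\g\to\g\to\g/Z(\g)$ with the universal central extension of $\g/Z(\g)$ --- is left unfinished at exactly the two points you flag yourself: you never prove that $\lambda_\g^{-1}(Z(\g))$ is central in $\g\cw\g$ (your computation only places $[\lambda_\g^{-1}(Z(\g)),\g\cw\g]$ inside $\ker\lambda_\g$), and you never establish the superperfectness of $\g\cw\g$ that the uniqueness argument for universal central extensions requires. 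Announcing that the surjectivity of $\alpha_{\g\cw\g}$ ``is what one must exploit'' is not a proof; as written, the ``only if'' direction has a genuine gap.

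The missing step admits a short direct proof that bypasses the universal central extension machinery entirely, and it is the route the paper takes. Since $Z(\g)$ is central, $(Z(\g),\alpha_\g|)$ and $(\g,\alpha_\g)$ act trivially on each other, so Proposition 3.1$(iii)$ gives $(Z(\g)\ast\g,\alpha_{Z(\g)\ast\g})\cong((Z(\g)\otimes\g^{ab})\oplus(\g^{ab}\otimes Z(\g)),\alpha_{\oplus})$, and this vanishes because perfectness means $\g^{ab}=0$. Hence $Z(\g)\cw\g=0$, the exact row $(Z(\g)\cw\g,\alpha_{Z(\g)\cw\g})\to HL_2^{\alpha}(\g)\to HL_2^{\alpha}(\g/Z(\g))$ coming from sequence $(2)$ and Theorem 4.1 (this is the top row of the diagram in Proposition 5.4$(iv)$) forces the second map to be injective, and Proposition 5.4$(iv)$ then puts $Z(\g)$ inside $Z^{\cw}_{\alpha}(\g)$. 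This is where perfectness and surjectivity actually enter --- through $\g^{ab}=0$ and the hypotheses of Proposition 3.1$(iii)$ --- rather than through a uniqueness statement for universal central extensions whose hypotheses you have not verified.
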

\begin{proof}
Note that the surjectivity of $\alpha_\g$ implies that
$Z_\alpha(\g)=Z(\g)$. By view of Proposition 3.1$(iii)$,
$(Z(\g)\ast\g,\alpha_{Z(\g)\ast\g})\cong
((Z(\g)\otimes\g^{ab})\oplus(\g^{ab}\otimes
Z(\g)),\alpha_{\oplus})=0$. Hence
$(Z(\g)\cw\g,\alpha_{Z(\g)\cw\g})=0$, implying that the natural
homomorphism $HL_2^{\alpha}(\g)\lo HL_2^{\alpha}(\g/Z(\g))$ is
injective. Therefore, applying Proposition 5.4$(iv)$ and Theorem
5.6, we have the result.
\end{proof}
In the following, we try to omit the condition of surjectivity in
Corollary 5.8.
\begin{proposition}
Let $(\g,\alpha_\g)$ be any perfect Hom-Leibniz algebra. Then
$(\g,\alpha_\g)$ is capable if and only if
$Z_{\alpha}(\g)\subseteq\ker(\alpha_\g)$.
\end{proposition}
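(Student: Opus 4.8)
The plan is to route the argument through Theorem 5.6: $(\g,\alpha_\g)$ is capable if and only if $Z^{\cw}_{\alpha}(\g)=0$. As $(\g,\alpha_\g)$ is perfect, Proposition 3.5 gives $Z^{\ast}_{\alpha}(\g)=Z^{\cw}_{\alpha}(\g)$, so I may pass freely between the two centres. Everything will then follow from the single identity
\[
Z^{\cw}_{\alpha}(\g)=\alpha_\g\big(Z_{\alpha}(\g)\big)\qquad\text{(for perfect }(\g,\alpha_\g)\text{)},
\]
since it turns $Z^{\cw}_{\alpha}(\g)=0$ into $\alpha_\g(Z_{\alpha}(\g))=0$, that is, into $Z_{\alpha}(\g)\subseteq\ker(\alpha_\g)$. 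The two implications of the proposition are thus exactly the two inclusions of this equality.

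First I would establish $\alpha_\g(Z_{\alpha}(\g))\subseteq Z^{\cw}_{\alpha}(\g)$, which already yields the forward implication. Fix $z\in Z_{\alpha}(\g)$; since $(Z_{\alpha}(\g),\alpha_{Z_\alpha(\g)})$ is a central ideal, each iterate $\alpha_\g^{k}(z)$ annihilates $\g$ on both sides. To see $\alpha_\g(z)\in Z^{\cw}_{\alpha}(\g)$ I must check $\alpha_\g^{k+1}(z)\cw y=y\cw\alpha_\g^{k+1}(z)=0$ for all $y\in\g$, $k\geq0$. Writing $y=\sum_j[a_j,b_j]$ by perfectness and expanding $\alpha_\g^{k+1}(z)\cw[a_j,b_j]$ and $[a_j,b_j]\cw\alpha_\g^{k+1}(z)$ through the exterior analogues of relations $(4a)$ and $(4c)$, every resulting summand carries a factor $[\alpha_\g^{k}(z),-]$ or $[-,\alpha_\g^{k}(z)]$, all of which vanish by centrality. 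Hence $\alpha_\g(z)\in Z^{\cw}_{\alpha}(\g)$, and with Theorem 5.6 this proves: capable $\Rightarrow Z^{\cw}_{\alpha}(\g)=0\Rightarrow\alpha_\g(Z_{\alpha}(\g))=0\Rightarrow Z_{\alpha}(\g)\subseteq\ker(\alpha_\g)$. I also record the easy containment $Z^{\cw}_{\alpha}(\g)\subseteq Z_{\alpha}(\g)$: applying the commutator Hom-map $\lambda_\g$ to $\alpha_\g^{k}(x)\cw y=0$ and to $y\cw\alpha_\g^{k}(x)=0$ yields $[\alpha_\g^{k}(x),y]=[y,\alpha_\g^{k}(x)]=0$.

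The converse rests on the opposite inclusion $Z^{\cw}_{\alpha}(\g)\subseteq\alpha_\g(Z_{\alpha}(\g))$; granting it, $Z_{\alpha}(\g)\subseteq\ker(\alpha_\g)$ forces $\alpha_\g(Z_{\alpha}(\g))=0$, hence $Z^{\cw}_{\alpha}(\g)=0$ and capability. My approach: given $x\in Z^{\cw}_{\alpha}(\g)=Z^{\ast}_{\alpha}(\g)$, use perfectness to write $x=\sum_i[a_i,b_i]$ and set $w=\sum_i a_i\cw b_i$, so that $\lambda_\g(w)=x$. Relation $(5a)$ gives $[a_i\cw b_i,c\cw d]=[a_i,b_i]\cw[c,d]$, so $[w,c\cw d]=x\cw[c,d]=0$ and likewise $[c\cw d,w]=[c,d]\cw x=0$; as $\g\cw\g$ is spanned by its generators $c\cw d$, the element $w$ is central in $\g\cw\g$. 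By Proposition 3.5 the map $\lambda_\g\colon\g\cw\g\to\g$ is the universal central extension with $\ker(\lambda_\g)\cong HL_2^{\alpha}(\g)$, and $\lambda_\g(\alpha_{\g\cw\g}(w))=\alpha_\g(x)$; the task is to convert the centrality of $w$ together with this relation into an expression of $x$ as $\alpha_\g$ of an $\alpha$-central element.

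This last conversion is where I expect the real difficulty to lie. One must show that every exterior-central element is an $\alpha_\g$-image (equivalently, that $\alpha_\g$ maps $Z^{\cw}_{\alpha}(\g)$ onto itself), and the obstacle is to reconcile the one-sided condition defining $Z_{\alpha}(\g)$ with the genuinely two-sided products occurring in the exterior-product relations. I would try to extract the preimage from the universal central extension $\lambda_\g$ (Theorem 4.1, Corollary 4.4), working inside the induced central presentation $(\r/[\r,\ff])\rightarrowtail(\ff/[\r,\ff])\twoheadrightarrow\g$ of Theorem 5.6, in which the exterior-central lifts are exactly the classes $\bar x$ with $[\bar x,-]=0$ in $\ff/[\r,\ff]$; in the finite-dimensional case the dimension count of Corollary 5.5 should then close the argument. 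As a fallback, one can instead try to generalise Corollary 5.8 by showing $\alpha_\g(Z_{\alpha}(\g))\cw\g=0$ and feeding this into Proposition 5.4$(iv)$.
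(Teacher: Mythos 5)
Your forward implication is sound and coincides with the paper's: for $z\in Z_{\alpha}(\g)$ the exterior analogues of $(4a)$ and $(4c)$ give $\alpha_\g^{k+1}(z)\cw[a,b]=[a,b]\cw\alpha_\g^{k+1}(z)=0$, perfectness lets you replace $[a,b]$ by an arbitrary element, and Theorem 5.6 finishes it. The converse, however, is a genuine gap, and you concede as much. Your route needs the inclusion $Z^{\cw}_{\alpha}(\g)\subseteq\alpha_\g\big(Z_{\alpha}(\g)\big)$, which is strictly stronger than the statement being proved (the proposition together with Theorem 5.6 only asserts that the two sides vanish simultaneously, not that they are equal), and none of the strategies you list closes it. In particular, your fallback --- showing $\alpha_\g(Z_{\alpha}(\g))\cw\g=0$ and invoking Proposition 5.4$(iv)$ --- only re-proves the inclusion $\alpha_\g(Z_{\alpha}(\g))\subseteq Z^{\cw}_{\alpha}(\g)$ you already have; it says nothing when $\alpha_\g(Z_{\alpha}(\g))=0$, which is exactly the hypothesis of the converse. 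Likewise, the centrality of your lift $w=\sum_i a_i\cw b_i$ in $\g\cw\g$ gives no mechanism for exhibiting $x$ as $\alpha_\g$ of anything.

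The paper does not route the converse through Theorem 5.6 at all. Assuming $\alpha_\g(Z_{\alpha}(\g))=0$, it exhibits a capable witness directly: take a basis $\{e_i\}$ of $Z_{\alpha}(\g)$, form $\kk=\g\dot{+}\,\mathrm{span}\{t_i\}$ with the product of Example 5.2$(ii)$ (namely $[e_i,f_{j_0}]=t_i$ for a fixed basis vector $f_{j_0}$ outside $Z_\alpha(\g)$, the old products on the $f_j$'s, zero elsewhere), and set $\alpha_\kk|_\g=\alpha_\g$, $\alpha_\kk(t_i)=0$. The hypothesis $Z_{\alpha}(\g)\subseteq\ker(\alpha_\g)$ is precisely what makes $\alpha_\kk$ multiplicative, and one checks $Z_{\alpha}(\kk)=\mathrm{span}\{t_i\}$, so $(\kk/Z_{\alpha}(\kk),\bar\alpha_\kk)\cong(\g,\alpha_\g)$ and $(\g,\alpha_\g)$ is capable by definition. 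To repair your proof you should either supply this explicit construction or give an independent argument that $Z^{\cw}_{\alpha}(\g)=0$ under the hypothesis; as it stands the ``only if'' half is proved but the ``if'' half is not.
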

\begin{proof}
Without loss of generality, we can assume $Z_\alpha(\g)\neq0$. If
there is $x\in Z_{\alpha}(\g)$ such that $\alpha_\g(x)\neq0$,
then, applying the defining relations of the exterior product,
$\alpha_\g^k(x)\cw[a,b]=[a,b]\cw\alpha_\g^k(x)=0$ for all
$a,b\in\g$ and $k\geq 1$. The perfectness of $(\g,\alpha_\g)$
yields that $\alpha_\g(x)\in Z_\alpha^\cw(\g)$, and then
$(\g,\alpha_\g)$ is not capable, thanks to Theorem 5.6. We now
suppose $\alpha_\g(Z_{\alpha}(\g))=0$ and consider the algebra
$\kk$ introduced in Example 5.2$(ii)$. We define the endomorphism
$\alpha_\kk$ such that $\alpha_\kk|_{\g}=\alpha_\g$ and
$\alpha_\kk|_{span\{t_i|i\in I\}}=0$. It is straightforward to
check that $(\kk,\alpha_\kk)$ is a Hom-Leibniz algebra with
$Z_{\alpha}(\g)=span\{t_i|i\in I\}$ and
$(\kk/Z_{\alpha}(\kk),\bar\alpha_\kk)\cong(\g,\alpha_\g)$.
\end{proof}
The following theorem shows that for deciding on the property of
capability, we may restrict ourselves to Hom-Leibniz algebras with
surjective endomorphisms.
\begin{theorem}\label{nonperfect}
Let $(\g,\alpha_\g)$ be a non-perfect Hom-Leibniz algebra with
non-surjective endomorphism $\alpha_\g$. Then $(\g,\alpha_\g)$ is
capable.
\end{theorem}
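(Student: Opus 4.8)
The plan is to prove capability directly from Definition 5.1 by producing a central extension of $(\g,\alpha_\g)$ whose $\alpha$-center is exactly the kernel. I would first record that, by Theorem 5.6, it suffices to show $Z^{\cw}_{\alpha}(\g)=0$, and that since $Z^{\cw}_{\alpha}(\g)\subseteq Z_{\alpha}(\g)$ (Proposition 5.4$(i)$ together with the fact that $Z_\alpha(\g)$ is the largest central ideal), the case $Z_{\alpha}(\g)=0$ is immediate by taking $(\kk,\alpha_\kk)=(\g,\alpha_\g)$. So I assume $N:=Z_{\alpha}(\g)\neq0$. I would model any candidate extension on $\kk=\g\oplus C$ (as a vector space) with $C$ central, $\alpha_\kk=\alpha_\g\oplus\alpha_C$, and bracket $[u,v]_\kk=[u,v]_\g+B(u,v)$ for a bilinear $B\colon\g\times\g\to C$; then $(\kk,\alpha_\kk)$ is a multiplicative Hom-Leibniz algebra exactly when $B$ is a Hom-Leibniz $2$-cocycle, i.e. $B(\alpha_\g u,\alpha_\g v)=\alpha_C(B(u,v))$ and $B(\alpha_\g X,[Y,Z])=B([X,Y],\alpha_\g Z)-B([X,Z],\alpha_\g Y)$. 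A short computation gives $Z_\alpha(\kk)=C$ if and only if every $0\neq z\in N$ is \emph{resolved}, meaning $B(\alpha_\g^{\,n}z,w)\neq0$ or $B(w,\alpha_\g^{\,n}z)\neq0$ for some $w\in\g$, $n\geq0$; in that case $\kk/Z_\alpha(\kk)\cong\g$. Thus the whole problem reduces to constructing a cocycle $B$ that resolves all of $N$.

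The clean case is $[\g,\g]+\Im(\alpha_\g)\neq\g$. Here I would pick $0\neq\psi\in\g^{*}$ vanishing on $[\g,\g]+\Im(\alpha_\g)$ and a projection $\pi_N\colon\g\to N$, and set $C=N$, $\alpha_C=0$, $B(u,v)=\psi(v)\,\pi_N(u)$. Since $\psi$ kills both $[\g,\g]$ and $\Im(\alpha_\g)$, every term appearing in the two cocycle identities vanishes, so $B$ is a cocycle; and for any $z\in N$ and $u$ with $\psi(u)=1$ one gets $B(z,u)=\pi_N(z)=z\neq0$, so all of $N$ is resolved simultaneously. This generalises Example 5.2$(iii)$ (there $\psi=e_1^{*}$ and the new central generator is $t=[e_3,e_1]_\kk$), and completes this case.

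For the remaining case $[\g,\g]+\Im(\alpha_\g)=\g$ I would assemble $B$ from one-dimensional pieces of two kinds. From non-perfectness: if $\psi\in\g^{*}$ kills $[\g,\g]$ and is an eigencovector, $\psi\circ\alpha_\g=\lambda\psi$, then $B_\psi(u,v)=\psi(u)\psi(v)$ valued in a line with $\alpha_C=\lambda^{2}$ is a cocycle (both cocycle sides vanish because $\psi$ kills $[\g,\g]$), and it resolves every $z$ with $\psi(z)\neq0$. From non-surjectivity: choose $0\neq\phi\in\g^{*}$ with $\phi\circ\alpha_\g=0$; then for any $j$ with $j\circ\alpha_\g=0$ the form $B(u,v)=j(u)\phi(v)$ with $\alpha_C=0$ is a cocycle resolving every $z\notin\Im(\alpha_\g)$. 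Taking the direct sum of all such extensions resolves every $z\in N$ that is either seen by some $\alpha_\g$-eigencovector trivial on $[\g,\g]$, or lies outside $\Im(\alpha_\g)$.

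The hard part will be the elements of $N$ that escape both families, namely $N\cap[\g,\g]\cap\Im(\alpha_\g)$, which are invisible to functionals on $\g^{ab}$ and to $\phi$. The technical heart of the proof is to show that, when $[\g,\g]+\Im(\alpha_\g)=\g$ (so $\bar\alpha_\g$ is bijective on $\g^{ab}$), multiplicativity $\alpha_\g[u,v]=[\alpha_\g u,\alpha_\g v]$ together with the Hom-Leibniz identity forces this intersection to vanish; the low-dimensional computations already exhibit the relevant rigidity, where multiplicativity reads $A^{T}MA=\mu M$ with $A=\bar\alpha_\g$ invertible and $M$ the induced bracket form. I also expect to treat a non-semisimple $\bar\alpha_\g$ on $\g^{ab}$ by replacing the eigencovector cocycles $\psi\otimes\psi$ with Jordan-chain cocycles carried by a $C$ on which $\alpha_C$ is the corresponding Jordan block. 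Once every $z\in N$ is resolved, the resulting $(\kk,\alpha_\kk)$ satisfies $Z_\alpha(\kk)=C$ and $\kk/Z_\alpha(\kk)\cong\g$, so $(\g,\alpha_\g)$ is capable.
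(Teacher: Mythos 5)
Your cocycle framework is set up correctly, and your treatment of the case $[\g,\g]+\Im(\alpha_\g)\neq\g$ is complete and correct: the cocycle $B(u,v)=\psi(v)\pi_N(u)$ with $\psi$ vanishing on $[\g,\g]+\Im(\alpha_\g)$ resolves all of $N=Z_{\alpha}(\g)$ at once. But the complementary case is a genuine gap, and the structural claim you propose to close it with --- that $Z_{\alpha}(\g)\cap[\g,\g]\cap\Im(\alpha_\g)=0$ whenever $[\g,\g]+\Im(\alpha_\g)=\g$ --- is false. Take $\g=H(1)\oplus\mathfrak{sl}_2$ (the Heisenberg algebra plus a perfect Lie algebra, viewed as Leibniz algebras) with $\alpha_\g=id_{H(1)}\oplus 0$. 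This is a multiplicative Hom-Leibniz algebra (every term of the twisted identity coming from the $\mathfrak{sl}_2$-component vanishes because $\alpha_\g$ kills that component, and the $H(1)$-component satisfies the ordinary Leibniz identity); it is non-perfect, $\alpha_\g$ is non-surjective, $[\g,\g]+\Im(\alpha_\g)=(\K z\oplus\mathfrak{sl}_2)+H(1)=\g$, and the Heisenberg central element $z$ lies in $Z_{\alpha}(\g)\cap[\g,\g]\cap\Im(\alpha_\g)$. Moreover $z$ is invisible to both of your cocycle families: $z\in[\g,\g]$ kills the $\psi\otimes\psi$ cocycles (and any Jordan-chain refinement, since no functional on $\g^{ab}$ sees $z$), and $z\in\Im(\alpha_\g)$ kills the $j\otimes\phi$ cocycles. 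So your construction cannot resolve $z$. The algebra is in fact capable --- replace $H(1)$ by the free nilpotent Lie algebra of class $3$ on two generators --- but the resolving cocycle there is the bracket of that cover and is of neither of your proposed forms.

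The paper's proof takes a different and more elementary route: it first shows that non-perfectness together with non-surjectivity always yields an element $x\in\g-([\g,\g]\cup\Im(\alpha_\g))$ (if $\g-[\g,\g]\subseteq\Im(\alpha_\g)$, then for a fixed $x\notin[\g,\g]$ and any $y\in[\g,\g]$ both $x$ and $x+y$ have $\alpha_\g$-preimages, so $y\in\Im(\alpha_\g)$ and $\alpha_\g$ would be surjective), and then adjoins central elements $t_i=[e_i,x]$ indexed by a basis of $Z_{\alpha}(\g)$. Your ``clean case'' is essentially a rigorous form of that construction under the stronger hypothesis that $x$ can be taken outside the subspace sum $[\g,\g]+\Im(\alpha_\g)$, which is exactly what is needed for the coordinate functional of $x$ to annihilate all brackets and all images of $\alpha_\g$ in the verification of the Hom-Leibniz identity. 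So you have correctly isolated the real difficulty in this theorem, but you have not overcome it, and the rigidity statement you bet on to finish is refuted by the example above.
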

\begin{proof}
We first assume that $\g-[\g,\g]\subseteq\Im(\alpha_\g)$, and
choose a fixed element $x\in\g-[\g,\g]$. Then for any
$y\in[\g,\g]$, there are $x_1,x_2\in\g$ such that
$\alpha_\g(x_1)=x$ and $\alpha_\g(x_2)=x+y$, implying that
$\alpha_\g(x_2-x_1)=y$. It therefore follows that
$[\g,\g]\subseteq\Im(\alpha_\g)$ and then $\alpha_\g$ is
surjective, a contradiction. So, we can find an element $x\in
\g-([\g,\g]\cup\Im(\alpha_\g))$. Choose a linear basis
$\{e_i~|~i\in I\}$ for $Z_{\alpha}(\g)$ and and extend it to a
linear basis $\{e_i, f_j~|~i\in I,j\in J\}$ for $\g$ containing
$x$, where $I$ and $J$ are non-empty sets. Take the vector space
$\kk=\g\dot{+}\frak{a}$ such that $\frak a$ is generated by the
set $\{t_i|~i\in I\}$. Consider the following product in $\kk$:
$[e_i,x]=t_i$ for all $i\in I$, $[f_j,f_k]$ is the same in $\g$
for all $j,k\in J$, and zero elsewhere. Define the endomorphism
$\alpha_\kk$ of $\kk$ such that $\alpha_\kk|_\g=\alpha_\g$ and
$\alpha_\kk(t_i)=0$ for $i\in I$. Since
$x\not\in\Im(\alpha_\kk)\cup [\kk,\kk]$, $x$ does not appear in
the brackets of the forms $[\alpha(a),b]$ or $[b,\alpha(a)]$,
where $a\in \kk,b\in[\kk,\kk]$. Hence for any $i\in I$, the
elements $t_i$ are never vanished by the Hom-Leibniz identity.
Also, for any $i\in I$ we have $[t_i,\kk]=[\kk,t_i]=0$. Therefore,
$(\kk,\alpha_\kk)$ is a Hom-Leibniz algebra with
$Z_{\alpha}(\kk)=\frak a$ and, moreover,
$(\kk/Z_{\alpha}(\kk),\bar\alpha_\kk)\cong(\g,\alpha_\g)$, as
desired.
\end{proof}
It is obvious that if the Hom-Leibniz algebras
$(\g_1,\alpha_{\g_1})$ and $(\g_2,\alpha_{\g_2})$ are capable,
then the direct sum $(\g_1\oplus\g_2,\alpha_{\g_1\oplus\g_2})$ is
capable, because $Z^{\cw}_{\alpha}(\g_1\oplus\g_2)\subseteq
Z^{\cw}_{\alpha}(\g_1)\oplus Z^{\cw}_{\alpha}(\g_2)$. In the
following, we prove the converse of this result, under some
conditions.
\begin{theorem}
Let $(\g,\alpha_\g)$ be a finite dimensional capable regular
Hom-Leibniz algebra such that
$(\g,\alpha_\g)=(\g_1\oplus\g_2,\alpha_{\g_1\oplus\g_2})$. Then
$(\g_1,\alpha_{\g_1})$ and $(\g_2,\alpha_{\g_2})$ are capable.
\end{theorem}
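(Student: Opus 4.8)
The plan is to convert everything into the language of exterior centers through Theorem 5.6 and then to show that each summand's exterior center injects into that of the whole algebra. Since $(\g,\alpha_\g)$ is capable, Theorem 5.6 gives $Z^{\cw}_\alpha(\g)=0$, and by the same theorem it is enough to prove $Z^{\cw}_\alpha(\g_1)=0$ and $Z^{\cw}_\alpha(\g_2)=0$. I would establish the stronger fact that the assignment $x_1\mapsto(x_1,0)$ maps $Z^{\cw}_\alpha(\g_1)$ into $Z^{\cw}_\alpha(\g)$ (and symmetrically for $\g_2$); as the latter is zero, this forces $Z^{\cw}_\alpha(\g_1)=0$ immediately.

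First I would record what the hypotheses buy. Regularity says $\alpha_\g=\alpha_{\g_1}\oplus\alpha_{\g_2}$ is bijective; being block-diagonal on $\g_1\oplus\g_2$, this forces both $\alpha_{\g_1}$ and $\alpha_{\g_2}$ to be bijective, in particular surjective. Proposition 5.4(ii) then gives $Z^{\cw}_\alpha(\g_i)\subseteq[\g_i,\g_i]$ for $i=1,2$, and Proposition 5.4(i) shows each $Z^{\cw}_\alpha(\g_i)$ is an $\alpha_{\g_i}$-invariant ideal. Now fix $x_1\in Z^{\cw}_\alpha(\g_1)$. To prove $(x_1,0)\in Z^{\cw}_\alpha(\g)$, I must check $(z,0)\cw(y_1,y_2)=0=(y_1,y_2)\cw(z,0)$ for $z=\alpha_{\g_1}^k(x_1)$, every $(y_1,y_2)\in\g$ and every $k\geq0$; note that $z\in Z^{\cw}_\alpha(\g_1)\subseteq[\g_1,\g_1]$ by $\alpha$-invariance. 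Bilinearity splits $(z,0)\cw(y_1,y_2)=(z,0)\cw(y_1,0)+(z,0)\cw(0,y_2)$. Under the direct-sum decomposition $(7)$ of $\g\cw\g$ (Theorem 4.8), the first term is the image of $z\cw y_1\in\g_1\cw\g_1$, which is zero since $z\in Z^{\cw}_\alpha(\g_1)$; the second term lands in the cross component $\g_1^{ab}\ast\g_2^{ab}$ as $\bar z\ast\bar y_2$, and this vanishes because $z\in[\g_1,\g_1]$ makes $\bar z=0$ in $\g_1^{ab}$.

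The reversed products are disposed of identically, and since $k$ was arbitrary, every $\alpha_\g^k(x_1,0)$ wedges trivially with all of $\g$ on both sides; hence $(x_1,0)\in Z^{\cw}_\alpha(\g)=0$, so $x_1=0$ and $Z^{\cw}_\alpha(\g_1)=0$. The symmetric argument yields $Z^{\cw}_\alpha(\g_2)=0$, and Theorem 5.6 then makes both $(\g_1,\alpha_{\g_1})$ and $(\g_2,\alpha_{\g_2})$ capable. I expect the cross-term computation to be the only real obstacle, and it is exactly the place where regularity is indispensable: surjectivity of $\alpha_{\g_1}$ (through Proposition 5.4(ii)) is what forces $z$ into $[\g_1,\g_1]$ and thereby annihilates its image in $\g_1^{ab}$. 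If one prefers to avoid quoting $(7)$, the same vanishing follows directly from the exterior analogue of relation $(4c)$ together with $[\g_1,\g_2]=0$, which gives $([a,b],0)\cw\alpha_\g(0,y)=0$ for all $a,b\in\g_1$; surjectivity of $\alpha_{\g_2}$ then removes the $\alpha_\g$ and covers all $(0,y)$. Finite-dimensionality plays no essential role in this direction beyond ensuring that the ambient decompositions are as stated.
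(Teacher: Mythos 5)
Your proof is correct, but it takes a genuinely different and more economical route than the paper's. The paper reduces the theorem to the full equality $Z^{\cw}_{\alpha}(\g_1\oplus\g_2)=Z^{\cw}_{\alpha}(\g_1)\oplus Z^{\cw}_{\alpha}(\g_2)$ and proves it in three steps: a Moneyhun-type splitting $\g_i=\T_i\dot{+}{\frak a}_i$ with ${\frak a}_i$ abelian and $Z(\T_i)\subseteq[\T_i,\T_i]$, followed by dimension counting via Corollary 5.5 and the direct-sum decomposition of Theorem 4.8; this is exactly where finite-dimensionality enters. You instead prove only the inclusion $Z^{\cw}_{\alpha}(\g_i)\subseteq Z^{\cw}_{\alpha}(\g)$, which is all the theorem needs once Theorem 5.6 converts capability of $(\g,\alpha_\g)$ into $Z^{\cw}_{\alpha}(\g)=0$, and you do it by a direct element computation: Proposition 5.4$(ii)$ (applicable because regularity of $\alpha_\g$ forces each $\alpha_{\g_i}$ to be bijective, hence surjective) puts $Z^{\cw}_{\alpha}(\g_1)$ inside $[\g_1,\g_1]$; the diagonal terms vanish by functoriality of $\cw$ along the inclusion $\g_1\hookrightarrow\g$; and the cross terms vanish because $[\g_1,\g_1]$ is the linear span of single brackets while the exterior analogues of relations $(4a)$--$(4c)$ together with $[\g_1,\g_2]=0$ give $([a,b],0)\cw(0,\alpha_{\g_2}(y))=0=(0,\alpha_{\g_2}(y))\cw([a,b],0)$, surjectivity of $\alpha_{\g_2}$ then covering all of $\g_2$ (your alternative appeal to decomposition $(7)$ works equally well). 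Two details worth making explicit in a write-up: the reversed products require relations $(4a)/(4b)$ rather than $(4c)$, and one should record that the Higgins commutator $[\g_1,\g_1]$ equals the span of brackets so that bilinearity finishes the cross-term computation. What your approach buys is substantial: it dispenses with finite-dimensionality and with the abelian-factor reduction altogether, so it in fact proves the statement for arbitrary regular capable direct sums; it is also reassuringly consistent with Example 5.12, since surjectivity of both $\alpha_{\g_1}$ and $\alpha_{\g_2}$ is precisely the hypothesis on which both your argument and that counterexample hinge. What the paper's longer argument buys in exchange is the full equality of exterior centers rather than a single inclusion.
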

\begin{proof}
We only need to prove that
$Z^{\cw}_{\alpha}(\g_1\oplus\g_2)=Z^{\cw}_{\alpha}(\g_1)\oplus
Z^{\cw}_{\alpha}(\g_2)$. By Example 5.1$(i)$, we can assume that
$(\g,\alpha_\g)$ is non-abelian. For convenience, we divide the
rest of the proof into three steps.

{\it Step} $1$. Here we prove that if $(\g_i,\alpha_{\g_i})$ is
non-abelian, then $(\g_i,\alpha_{\g_i})=(\T_i\oplus{\frak
a}_i,\alpha_{\T_i\oplus{\frak a}_i})$ such that $({\frak
a}_i,\alpha_{{\frak a}_i})$ is an abelian Hom-Leibniz algebra and
$Z^{\cw}_{\alpha}(\g_i)=Z^{\cw}_{\alpha}(\T_i)$, for $i=1,2$.

By arguments similar to those used in \cite[Theorem 12]{M} we can
deduce that $(\g_i,\alpha_{\g_i})=(\T_i\oplus{\frak
a}_i,\alpha_{\oplus})$ in which $({\frak a}_i,\alpha_{{\frak
a}_i})$ is abelian and $[\g_i,\g_i]\cap
Z(\g_i)=Z(\T_i)\subseteq[\T_i,\T_i]$ (see also \cite[Theorem
3.8]{PNP} for the case of Hom-Lie algebras). By Example 5.1$(i)$,
$({\frak a}_i,\alpha_{{\frak a}_i})$ is capable and then
$Z^{\cw}_{\alpha}({\frak a}_i)=0$, implying that
$Z^{\cw}_{\alpha}(\g_i)\subseteq Z^{\cw}_{\alpha}(\T_i)$. We now
claim that $Z^{\cw}_{\alpha}(\T_i)\subseteq
Z^{\cw}_{\alpha}(\g_i)$. By virtue of Theorem 4.8 and Corollary
5.5, we have\vspace{-.2cm}
\begin{alignat*}{1}
\dim(HL_2^{\alpha}(\g_i))&=\dim(H_2^{\alpha}(\T_i))+
\dim(HL_2^{\alpha}({\frak a}_i))+\dim(((\T_i^{ab}\otimes{\frak
a}_i)\oplus
({\frak a}_i\otimes\T_i^{ab}),\alpha_{\oplus}))\\
&=\dim(HL_2^{\alpha}(\f{\T_i}{Z^{\cw}_{\alpha}(\T_i)}))-\dim(Z^{\cw}_{\alpha}(\T_i))+
\dim(HL_2^{\alpha}({\frak a}_i))\\
&+\dim(((\T_i^{ab}\otimes{\frak a}_i)\oplus
({\frak a}_i\otimes\T_i^{ab}),\alpha_{\oplus}))\\
&=\dim(HL_2^{\alpha}(\f{\g_i}{Z^{\cw}_{\alpha}(\T_i)}))-\dim(Z^{\cw}_{\alpha}(\T_i))
\end{alignat*}
which, using again Corollary 5.5, gives the required result.

{\it Step} $2$. Here we prove that if the Hom-Leibniz algebras
$(\g_i,\alpha_{\g_i})$, $i=1,2$, are both non-abelian and
$Z(\g_i)\subseteq[\g_i,\g_i]$, then $Z^{\cw}_{\alpha}(\g)=
Z^{\cw}_{\alpha}(\g_1)\oplus Z^{\cw}_{\alpha}(\g_2)$.

It is enough to verify that $Z^{\cw}_{\alpha}(\g_i)\subseteq
Z^{\cw}_{\alpha}(\g)$. By Corollary 5.5 and an argument similar to
Step $1$, we deduce that\vspace{-.2cm}
\[\dim(HL_2^{\alpha}(\g))=\dim(HL_2^{\alpha}(\f{\g}{Z^{\cw}_{\alpha}(\g_i)}))
-\dim(Z^{\cw}_{\alpha}(\g_i))\vspace{-.1cm},\]from which we have $
Z^{\cw}_{\alpha}(\g_i)\subseteq Z^{\cw}_{\alpha}(\g)$.

{\it Step} $3$. The completion of the proof.

If the one of Hom-Leibniz algebras $(\g_1,\alpha_{\g_1})$ or
$(\g_2,\alpha_{\g_2})$ is abelian, the result obtains from Step
$1$. So suppose that both are non-abelian. Then, using again Step
$1$, there are non-abelian-subalgebras $(\T_i,\alpha_{\T_i})$ of
$(\g_i,\alpha_{\g_i})$, $i=1,2$, such that
$(\g,\alpha_{\g})=(\T_1\oplus\T_2\oplus{\frak
a},\alpha_{\oplus})$,
$Z^{\cw}_{\alpha}(\g)=Z^{\cw}_{\alpha}(\T_1)\oplus
Z^{\cw}_{\alpha}(\T_2)$,
$Z^{\cw}_{\alpha}(\g_i)=Z^{\cw}_{\alpha}(\T_i)$ and
$Z(\T_i)\subseteq[\T_i,\T_i]$. But by Step $2$,
$Z^{\cw}_{\alpha}(\T_1\oplus\T_2)=Z^{\cw}_{\alpha}(\T_1)\oplus
Z^{\cw}_{\alpha}(\T_2)$. We therefore conclude that
$Z^{\cw}_{\alpha}(\g_1\oplus\g_2)=Z^{\cw}_{\alpha}(\g_1)\oplus
Z^{\cw}_{\alpha}(\g_2)$, as desired.
\end{proof}
The following example shows that the regular condition is
essential in Theorem 5.10.
\begin{example}
Let $H(n)$ be the Heisenberg algebra of dimension $2n+1$. Then by
\cite[Theorem 3.5]{EP}, the Hom-Leibniz algebra
$(H(n),id_{H(n)})$ for $n\geq2$ is not capable. But if we define $K=span\{t\}$
and $\alpha_K(t)=0$, then thanks to Theorem \ref{nonperfect}, we infer that
$(H(n),id_{H(n)})\oplus(\kk,\alpha_\kk)$ is capable.
\end{example}

\end{document}